\newcounter{michicounter}
\newcommand{\m}{\mathbbm{m}}
\newcommand{\EE}{\mathbb{E}}
\newcommand{\NN}{\mathbb{N}}
\newcommand{\PP}{\mathbb{P}}
\newcommand{\QQ}{\mathbb{Q}}
\newcommand{\RR}{\mathbb{R}}
\newcommand{\ZZ}{\mathbb{Z}}
\newcommand{\aA}{\mathcal{A}}
\newcommand{\bB}{\mathcal{B}}
\newcommand{\cC}{\mathcal{C}}
\newcommand{\dD}{\mathcal{D}}
\newcommand{\jJ}{\mathcal{J}}
\newcommand{\kK}{\mathcal{K}}
\newcommand{\lL}{\mathcal{L}}
\newcommand{\nN}{\mathcal{N}}
\newcommand{\oO}{\mathcal{O}}
\newcommand{\qQ}{\mathcal{Q}}
\newcommand{\sS}{\mathcal{S}}
\newcommand{\al}{\alpha}
\newcommand{\eps}{\varepsilon}
\newcommand{\e}{\varepsilon}
\newcommand{\la}{\lambda}
\newcommand{\si}{\sigma}
\newcommand{\om}{\omega}
\newcommand{\ra}{\rightarrow}
\newcommand{\ti}{\tilde}
\newcommand{\ind}{\mathbf{1}}
\newcommand{\lqq}{\leqslant}
\newcommand{\gqq}{\geqslant}
\newtheorem{thm}{Theorem}
\newtheorem{prop}{Proposition}
\newtheorem{cor}{Corollary}
\newtheorem{lem}{Lemma}
\newtheorem{rem}{Remark}
\newtheorem{exm}{Example}
\DeclareMathSymbol{\ophi}{\mathalpha}{letters}{"1E}
\renewcommand{\phi}{\varphi}
\newcommand{\bI}{\mathbf{1}} 
\newfont{\cyrfnt}{wncyr10}
\def\J3{\cyrfnt{\rm \u{\cyrfnt I}}}
\def\j3{\cyrfnt{\rm \u{\cyrfnt i}}}
\definecolor{DarkGreen}{rgb}{0.1,0.7,0.3}   
\definecolor{DarkGreen}{rgb}{0.1,0.7,0.3}   
\begin{document}

\title[Deviation frequencies of Brownian path property approximations]{
Deviation frequencies of Brownian path property approximations}
\author{Michael A. H\"ogele$\qquad$}
\address{Departamento de Matem\'aticas, Universidad de los Andes, Bogot\'a, Colombia, ma.hoegele@uniandes.edu.co,\\ https://orcid.org/0000-0001-5744-0494}
\author{$\qquad$ Alexander Steinicke}
\date{\today}
\address{Chair of Applied Mathematics, Technical University of Leoben, Austria. alexander.steinicke@leoben.ac.at, https://orcid.org/0000-0001-6330-0295} 
\keywords{
Quantitative Borel-Cantelli lemma. 
Mean deviation frequencies. 
L\'evy's construction of Brownian motion. 
Quantitative laws of the iterated logarithm. 
Quantitative Kolmogorov continuity theorems. 
Quantitative Continuity properties of Brownian paths. 
} 
\subjclass{60A10; 60F15; 60G17}
\hfill\\[-1cm]
\maketitle
    
\begin{abstract}
This case study proposes a.s.~convergence quantifications of many classical sample path property approximations of Brownian motion in terms of the tradeoff between a.s.~rates and the integrability of the modulus of convergence, as well as the deviation frequencies. This includes L\'evy's construction of Brownian motion, the Kolmogorov-Chentsov (and the Kolmogorov-Totoki) continuity theorem, L\'evy's modulus of continuity, the Paley-Wiener-Zygmund theorem, the a.s.~approximation of the quadratic variation as well as the laws of the iterated logarithm by Khinchin, Chung and Strassen, among others. 
\end{abstract}

\section{\textbf{Introduction}} 

Since its beginnings in the first half of the 20th century \cite{Ka47, K043, Lev37, Lev54} the elegance of many results in stochastic calculus  unarguably relies on the precise identification of almost sure topological properties of the trajectories of Brownian motion (the Wiener process).  
These results present Brownian path properties solely on the level 
of random functions without referring to any underlying probabilistic approximations \cite{KS98, MP10}. 
This purely path centered view - in retrospective - laid the grounds for rough path calculus much later \cite{Ly98, LC07, FV} and the breakthrough of solving stochastic (partial) differential equations by the associated regularity structures in \cite{Hai13, Hai14}. 

Many classical results, such as L\'evy's modulus of continuity \cite{Lev37}, the convergence of the partial sums for the quadratic variation \cite{Lev54} or the laws of the iterated logarithm \cite{Lev54, Ch56} can be shown with an application of the first Borel-Cantelli lemma, where the summability of error events yields the almost sure non-existence of exceptional sequences, and hence almost sure convergence. However, the summability often hides the rates of decrease of the error probabilities, since in many situations the mentioned summability of the events is often not even close to being sharp, but considerably better, such as for instance, of some exponential or Gamma type order, or even faster. On the other hand, in several occasions, where the probabilities are only barely summable, the events have additional properties such as being nested or independent, for instance, as a consequence of independent increments. This structural surplus which generalizes the notion of complete convergence \cite{Yu99} seemingly has not been used systematically to quantify almost sure convergence. In \cite{EH22}, the authors started this work quantifying higher moments of the overlap statistics in the first Borel-Cantelli lemma. These results open the door to distinguish and quantify different types of almost sure convergence according to the finiteness of moments of the sequence error events, such as in the strong law of large numbers, or the presence of a large deviations principle (see Theorem 7 and 8 in \cite{EH22}). For a short comprehensive introduction on the respective literature of the Borel-Cantelli lemma we refer to the introduction there. In \cite{EHS26} these results were refined from the mean deviation frequencies to the moments maximal deviation index, with the same type of upper bounds. 
The main application are the tradeoff between a.s.~error rates of convergence and the integrability of the modulus of continuity applied to martingale convergence theorems, explained in item c.~below.
    
The idea of this article is a case study about the deviation frequencies of Brownian path properties, by kind of reverse engineering the beautiful work of hiding the probabilistic structure and to distinguish almost sure properties by its robustness in terms of deviation frequencies along the underlying discretizations in the proofs. While the rates of the approximations in probability have been well-known in the field, they have not been translated to almost sure convergence statements of the sample paths, such as \textit{deviation frequencies} or incidence of error. With deviation frequencies we mean the full count of occurrences of certain 'error' events $(E_n(\e))_{n\in \NN}$ mostly of the form 
\begin{equation}\label{e:fehlerereignis}
E_n(\e) := \{|\mbox{approximation}(n)-\mbox{limit}|>\e\}\qquad \mbox{ for some parameter }\e>0. 
\end{equation}
The events $E_n(\e)$ may depend on several parameters, such as $\e$ here. 
This means we focus on the integrability (or higher order means) of the (random) deviation frequency count $\oO_\e := \sum_{n=1}^\infty \ind(E_n(\e))$.  
Note that a finite expectation of $\oO_\e$ coincides with the notion of complete convergence by \cite{Yu99} and is related to fast convergence in \cite{Kle08}. In \cite{EH22} this notion is generalized to arbitrary higher moments of $\oO_\e$, often given through expressions of the type $\EE[\sS(\oO_\e)]<\infty$, for the discrete antiderivative $\sS(N):=\sum_{n=1}^Na_n$ with $\sS(0) := 0$ of an increasing sequence $(a_n)_{n\gqq 1}$. The decay rate of the tails of the error or deviation frequencies, $\PP(\oO_\e\gqq k)$, that one infers by the existence of $\EE[\sS(\oO_\e)]$ and (typically) by Markov's inequality, distinguishes well different types of almost sure approximations. For instance, in the law of the iterated logarithm the upcrossing frequencies of a given $\e$-error decays rather slowly, that is to say, even for exponentially small time scales, the probability of more than $k$ occurrences in the overlap statistics decays rather slowly (of order $k^{1+\delta}$ for some $\delta$). On the other hand, the probabilities of trespassing frequencies $\gqq k$ of an arbitrary $\e$-error strip in the upper bound of L\'evy's modulus of continuity decay (as a function of growing $k$) exceptionally sharp, that is, with a Gumbel type rate (of order $\exp(-p\exp(\vartheta k))$ for some $p, \vartheta>0$). Of course, full comparability between different results is hard to obtain, since most approximations are tailormade precisely for the particular result. Still, the assessment of the asymptotic decay of the deviation frequency is rather useful. The same kind of estimates is obtained if the error count $\oO_\e$ is replaced by the last error index $\m_\e := \sum_{n=1}^\infty \ind(\bigcup_{m\gqq n} E_m(\e))$, which represents for each realization $\omega$ the (random) \emph{maximal index} $n$, at which an event $E_m(\e)$ is realized, that is, for any $m>\m_\e$ the error bound $\e$ is satisfied. In other words, $\m_\e$ is the modulus of a.s.~convergence. This concept is further generalized to the situation of $\e$ being replaced by some rates of convergence $(\e_n)_{n\in \NN}$, $\e_n \searrow 0$, as $n\to \infty$. 
In this case, $\m_{(\e_n)}$ represents the time step $n$ beyond which a certain a.s.~rate of convergence is always met, $\m_\e=\sup\{n-1~|~n\in\NN, \om \notin E_n(\e_n)\}$. \\
We want to stress the following differences between $\oO_\e$ and $\m_\e$: Clearly, $\oO_\e \lqq \m_\e$ a.s.~since the number of errors is always smaller than the last error occurrence. If $\oO_\e(\omega) < \m_\e(\omega)$ for some realization $\omega$, there are exactly $\m_\e(\omega) - \oO_\e(\omega)$ indices $k$ before the last error index $\m_\e$, where the error bound is satisfied, i.e., $\omega \notin E_k(\e)$. Further, the sum of error indices is invariant under a shift and renumbering of the indices, since $\sum_{n=n_0}^\infty \ind(E_n(\e)) = \sum_{n=0}^\infty \ind(E_{n_0+n}(\e)) = \sum_{n=0}^\infty \ind(E_{\si(n_0+n)}(\e))$ for any permutation of $\NN_0$. This is not true for the maximum index $\m_\e = \sup\{n-1~|~n\in\NN, \om \notin E_n(\e_n)\}$, which is a function of the events' ordering. Clearly, moving the last occurrence of the an error event changes $\m_\e$ by definition. \\
Note that whenever the conditions for the first Borel-Cantelli lemma are met, the supremum in the definition of $\m$ can be replaced a.s.~by a maximum, which is the notation used in \cite{EHS26}. The set over which the supremum is taken varies slightly in the various applications. This depends on the first index above which the events $E_n(\e_n)$ are well defined.

The benefit of this concept is fourfold: 
\begin{enumerate}
 \item[a.)] Above all, this manuscript implements an \textit{intuitive and widely applicable statistical quantification of many $\PP$-a.s.~approximations of sample paths}, which measures for each error level $\e$ the \textit{random frequency of level $\e$ deviations from the limit} until finally complying with the error. In particular, we study the deviation frequencies in the almost sure convergence of L\'evy's Haar basis construction of Brownian motion (Theorem~\ref{thm:asL}) with the help of an asymptotic version of results in \cite{EH22} (Proposition~1) with particular focus on i.i.d.~Gaussian sequences (Example~\ref{ex:indepGauss}). The findings obtained this way are applied to prove $\PP$-a.s.~rates of convergence in terms of \textit{mean deviation frequencies} for L\'evy's modulus of continuity of Brownian motion (Theorem~\ref{thm:Levymodulus}), the Paley-Wiener-Zygmund theorem (Theorem~\ref{thm:PWZ}), the quantitative loss of path monotonicity (Theorem~\ref{thm: monotonicity}), the a.s.~convergence to the quadratic variation (Theorem~\ref{thm:QV1} and \ref{thm:QV2}), Khinchin's and Chung's ``other'' law of the iterated logarithm (Theorem~\ref{thm: lil} and \ref{thm:Chung}) and Strassen's functional law of the iterated logarithm (Theorem~\ref{thm:Strassen}) among others. These robustness results are of interest in itself but have also natural applications in simulation and numerical analysis.  
 \item[b.)] In some cases the concept of deviation frequencies has the potential to yield sharpened classical results, such as improved rates of $L^2$-convergence of L\'evy's construction of Brownian motion (Corollary \ref{cor:L2}). 
In addition, we study the Kolmogorov test for the asymptotics of Brownian motion close to $0$, which is classically stated as a probability $0 / 1$ law, and does not allow to distinguish between different asymptotics. In Theorem~\ref{thm: Koltest} we give a precise quantitative distinction of the a.s.~asymptotics in terms of different mean deviation frequencies for different benchmark functions. 
  \item[c.)] The use of the moment estimates in the first Borel-Cantelli lemma in \cite[Lemma~1]{EHS26} illustrates the following tradeoff 
  between a sequence of a.s.~error tolerances, $(\e_n)_{n\in \NN}$, and the error incidence (or deviation frequency) until the random variables finally comply with $(\e_n)_{n\in \NN}$. 
  For any positive and non-increasing sequence $(\e_n)_{n\in \NN}$ as $n\ra\infty$,
  we obtain a sequence of error probabilities $(p_n)_{n\in \NN}$ given with the help of \eqref{e:fehlerereignis} by 
 \begin{equation}
 \PP(E_n(\e_n)) =: p_n. 
 \end{equation}
 If $T_a := \sum_{n=1}^\infty a_n \sum_{m=n}^\infty p_m< \infty$ for some positive weights $a = (a_n)_{n\in \NN_0}$, we find by Theorem~1 in \cite{EH22} and Markov's inequality (which are also the main results in \cite{EHS26}) that for the (discrete) antiderivative $\sS_a(N) = \sum_{n=1}^N a_n$ with $\sS_a(0) = 0$ we have the estimate
 \begin{equation}\label{e:tradeoff}
 \PP(\oO_{\e_n}\gqq k)\lqq \PP(\m_{\e_n}\gqq k)\lqq \sS_a^{-1}(k)\cdot T_a, \qquad k\gqq 1. 
 \end{equation}
 There are two extremal cases: (i) if $\e_n = \e>0$ fixed, we have that $T_a$ is finite for 
 a ``maximally growing'' sequence of weights $a$, which yields (by monotonicity) an equally ``strongest decrease'' on the right-hand side of \eqref{e:tradeoff} of the deviation frequencies $k$. On the other hand, (ii) if $\e_n$ decreases ``maximally'' in the sense that $(p_n)_{n\in \NN}$ is barely summable, we obtain by the usual first Borel-Cantelli lemma and Markov's inequality that the deviation frequencies decay only linearly. Virtually, all mixed regimes between (i) and (ii) can be obtained with the same technique. 
 \hfill\\
 This tradeoff can be stated informally as follows: 
 ``The faster an almost sure error tolerance $\e_n$ descends to $0$, as $n\ra\infty$,  
  the slower decays (in a nonlinear sense) the respective probability for the last index of error
  $\PP(\sup\{n-1~|~n\in \NN, \om \notin E_n(\e_n)\}\gqq k)$ in $k$.''\\ 
  The statement remains true if the word ``faster'' and ``slower'' are mutually exchanged. 
   \item[d.)] The deviation count and last deviation index approach in the Borel-Cantelli lemma of \cite{EH22} and \cite{EHS26} can be adapted to many more settings, such as, for instance, the numerical solutions of stochastic (partial) differential equations, L\'evy and additive processes, among others. Example~\ref{ex:BM}-\ref{ex:QVdyadic2} and~\ref{ex:KolmoTest}, and Remark~\ref{rem:QVLevy} illustrate how to implement our findings. See also for instance \cite[Subsection 3.2.2]{EH22}. 
\end{enumerate}

The article is organized as follows: in Section~\ref{s:mainresults} we present the main results and examples, organized in five subsections: \ref{ss:mainratesLevy}, the almost sure convergence results of L\'evy's construction of Brownian motion, \ref{ss:maincontinuity}, continuous versions of Brownian motion, stochastic processes and stochastic fields, 
\ref{ss:mainfine}, fine properties of Brownian paths, \ref{ss:mainlil}, the laws of the iterated logarithm and \ref{ss:BCmain} and \ref{ss:BCmain1} with the asymptotic overlap statistics and an asymptotic quantitative Borel-Cantelli lemma. The proofs are organized - in the respective order of the statements - in the appendices A, B, C, D and E. 

\bigskip


\section{\textbf{The main results}}\label{s:mainresults} 

Our main results are grouped in five subsections with deviation quantification results: L\'evy's construction (Subsec. \ref{ss:mainratesLevy}), Kolmogorov-Chentsov type results (Subsec.~\ref{ss:maincontinuity}), several fine continuity and non-differentiability results of Brownian paths (Subsec.~\ref{ss:mainfine}) and several laws of the iterated logarithm for Brownian paths (Subsec.~\ref{ss:mainlil}). In Subsection \ref{ss:BCmain} and \ref{ss:BCmain1} 
we present - somewhat independently - the asymptotic mean deviation estimates, on which Theorem \ref{thm:asL} relies.

\subsection{\textbf{Rates of almost sure convergence in L\'evy's construction}}\label{ss:mainratesLevy}\hfill\\

\noindent This first subsection is dedicated to {\it P.~L\'evy}'s approximation procedure to obtain a Brownian motion on $[0,1]$ as $\PP$-a.s. uniform limit of a sequence of random functions $L^J_t$, which are linear combinations of Haar basis functions, weighted by i.i.d.~Gaussian random variables (see e.g. \cite{MP10}). 
Theorem \ref{thm:asL}, the main theorem of this subsection, will state that for the a.s.~uniform limit $W:=\lim\limits_{J\to\infty } L^J$, we obtain among others 
\begin{itemize}
\item for all $\alpha>0$ and $J\in \NN$ explicitly known random variables $\Lambda_J(\alpha)>1$ and a deterministic exponential rate $R(J) \searrow 0$ as $J\ra\infty$, such that 
\begin{align*}
\|L^J-W\|_\infty\lqq \sqrt{1+\alpha}\cdot \Lambda_J(\alpha) \cdot R(J),\quad\PP\text{-.a.s.,}\\[-5mm]
\end{align*}
where $\Lambda_J(\alpha)$ has Gaussian moments $\EE [\exp(q \Lambda_J^2(\alpha))]<\infty$ for some $q>0$ (Theorem~\ref{thm:asL}~a),
\item for the deviation frequency $\oO_\varepsilon:=\sum\limits_{J=0}^\infty\bI\{\|L^J-W\|_\infty>\e\}$ 
and $\m_\e(\omega) := \sup\{J\gqq 0~|~\|L^J(\om)-W(\om)\|_\infty>\varepsilon\}$, 
defined for all $\varepsilon>0$, we get 
\begin{align*}
\EE[\exp(p\oO_\varepsilon)]\lqq \EE[\exp(p\m_\varepsilon)]<\infty \qquad \mbox{ for some }p>0
 \qquad \mbox{(Theorem~\ref{thm:asL}~d)} 
\end{align*}
\item rates of exponential type given in Theorem~\ref{thm:asL}\, d, 
$$\PP(\oO_\varepsilon\gqq k) \lqq \PP(\m_\varepsilon\gqq k)\lqq C_1  \cdot 2^{-\frac{k}{2} ~+ \mbox{[lower order terms]}
} \cdot (1+ k^\frac{3}{2})$$
for an explicitly known constant $C_1=C_1(\varepsilon,p)$ and optimized lower order terms.
\end{itemize}
\noindent The full statement of Theorem \ref{thm:asL} contains more refined results, such as a characterization of the modulus of convergence (Theorem \ref{thm:asL} b) and a step-by-step construction error analysis (Theorem~\ref{thm:asL}~c). 
Before we go more into detail, in order to grasp the scope of our a.s.~approximation, we compare it to the historical -- and as well as to the optimal -- approximation in $L^2(\Omega\times[0,1])$ in the following subsection.\\

\noindent \textbf{Wiener's construction: } In his celebrated article \cite{Wie23} {\it N.~Wiener} constructed a Brownian motion over the interval $[0, 1]$ with respect to $L^2(\Omega \times [0,1])$ by the following Fourier series on a probability space which carries an i.i.d.~sequence $(Z_n)_{n\in \NN}$ such that $Z_1\sim N(0,1)$ and defined
\[
W^J_t := \sum_{k=1}^J \frac{\sin(k \pi t)}{k}\cdot Z_k, \qquad J\in \NN, \quad t\in [0, 1].  
\]
Wiener's proof of convergence calculation is rather involved. 
A stronger statement than convergence in $L^2(\Omega \times [0, 1])$
is given for instance in \cite[p.~167, formula~$(9.21)$]{Ko07} and implies 
\begin{align*}
\EE\big[\|W^{2^{J+1}}-W^{2^{J}}\|_\infty^2\big]^{\frac{1}{2}} 
\lqq (1+\sqrt{2})\cdot 2^{-\frac{J}{4}} ,\quad J\in \NN,\\[-5mm]  
\end{align*}
where $\|\cdot \|_\infty$ is the supremum norm over $[0, 1]$. 
Due to the independence of $(W^{2^{J+1}}-W^{2^{J}})_{J\in\NN}$ by construction,  Pythagoras' identity and the monotone convergence theorem it is easy to see
\begin{align*}
\PP\Big(\sum_{J=1}^\infty \|W^{2^{J+1}}-W^{2^{J}}\|_\infty>M\Big)
&
\lqq 
M^{-2}  \sum_{J=1}^\infty \EE\big[\|W^{2^{J+1}}-W^{2^{J}}\|_\infty^2\big]\lqq M^{-2} (1+\sqrt{2})^2 \sum_{J=1}^\infty 2^{-\frac{J}{2}},
\end{align*}
which is summable over $M$. Therefore, the first Borel-Cantelli lemma yields 
\[
\sum_{J=1}^\infty \|W^{2^{J+1}}-W^{2^{J}}\|_\infty < \infty \qquad \PP\mbox{-a.s.}
\]
and hence $W:= \lim\limits_{J\ra\infty} W^{2^{J}}$ converges uniformly over $[0, 1]$ in $L^2(\PP)$ and $\PP$-a.s. Furthermore, 
\begin{equation}\label{e:L2infty}
\EE[\|W^{2^{J+1}}-W\|_\infty^2] 
= 
\sum_{j=J}^\infty  \EE[ \|W^{2^{j+1}}-W^{2^j}\|_\infty^2]\lqq  (\sqrt{2}+1)^2  \sum_{j=J}^\infty 2^{-\frac{j}{2}} = \sqrt{2}\, \frac{(\sqrt{2}+1)^2}{\sqrt{2}-1}\cdot 2^{- \frac{J}{2}}.  
\end{equation}
While $\|W^{2^{J}}-W\|_\infty \ra 0$ $\PP$-a.s., we cannot derive a meaningful upper bound for the $\PP$-a.s.~rate of convergence: Due to the symmetry of the standard normals it is necessary to establish the absolute convergence of the series in order to obtain a.s.~upper bounds. However, by elementary calculus it is well-known that for Lebesgue-almost all $t\in [0,1]$,
\begin{align*}
\lim_{J\ra\infty} \sum_{k=1}^J \frac{|\sin(k \pi t)|}{k} =\infty. 
\end{align*}
Due to the $L^\infty([0, 1]) \subseteq L^2([0,1])$ embedding, formula \eqref{e:L2infty} yields an upper bound of the original convergence in $L^2(\Omega \times [0,1])$ of order $2^{-J/4}$, which is not optimal among all possible choices of orthonormal bases.\\

\noindent\textbf{The Kosambi-Karhunen-Lo{\`e}ve construction: } It is known for a long time \cite{Ash65, KMR83} that the optimal choice of basis in this topology is given by the widely used \textit{Kosambi-Karhunen-Lo\`eve expansion} of Brownian motion, see \cite{BPS14, Ka47, K043, Lo78}:  
\[
K_t^N := \frac{\sqrt{2}}{\pi} \sum_{k=0}^{N} \frac{\sin((k-\frac{1}{2}) \pi t)}{k-\frac{1}{2}}\cdot Z_k, \qquad N\in \NN, t\gqq 0. 
\]
Still, it suffers the same obvious flaw of the lack of absolute convergence 
\begin{align*}
\lim_{J\ra\infty} \sum_{k=1}^J \frac{|\sin((k-\frac{1}{2}) \pi t)|}{k-\frac{1}{2}} = \infty,\quad \text{Lebesgue-a.e}.
\end{align*}

\noindent \textbf{The scope of almost sure estimates: }
The optimal rate of the Kosambi-Karhunen-Lo\`eve expansion in $L^2(\Omega \times [0, 1])$ 
is known and satisfies 
\begin{equation*}
 \EE\Big[\int_0^1 |K_s^N-W_s|^2 ds\Big]^{\frac{1}{2}} 
= \frac{1}{\pi} \bigg(\sum_{k=N+1}^\infty \frac{1}{(k-\frac{1}{2})^2}\bigg)^{\frac{1}{2}}
\lqq  \frac{1}{\pi} \frac{1}{\sqrt{N}}.
\end{equation*}
If we consider ``packages'' or ``generations'' of basis vectors of length $N = 2^J$ (as in L\'evy's construction, which we study below) we obtain the upper bound 
\begin{equation}\label{e:L2upperbound}
 \EE\Big[\int_0^1 |K_s^{2^J}-W_s|^2 ds\Big]^{\frac{1}{2}} 
\lqq \frac{1}{\pi}\cdot 2^{-\frac{J}{2}}.
\end{equation}
Observe that by the $L^\infty[0,1] \subseteq L^2[0,1]$ embedding, any integrable, $\PP$-a.s. upper bound of $\|K_s^{2^J}-W_s\|_\infty$ is an upper bound of 
$ \EE[\int_0^1 |K_s^{2^J}-W_s|^2 ds]^{\frac{1}{2}}$. 
Therefore, we cannot expect better
rates of  $\PP$-a.s. convergence than of order $2^{-\frac{J}{2}}$.\\

\noindent \textbf{L\'evy's construction: } Consider the Haar basis $(h_n)_{n\in \NN_0}$ of $L^2[0,1]$ \cite[Chapter 2.3]{KS98}, \cite{Ma97} 
and the Schauder functions given by 
\begin{equation}\label{e:Haarintegral}
\int_0^t h_n(s) ds = 
\int_0^t 2^{-\frac{j}{2}} h_1(2^{\frac{j}{2}} s -k)ds = 
2^{-\frac{j}{2}-1} H(2^j t -k), 
\end{equation}
where $n = 2^j + k$, $k=0, \dots, 2^{j}-1$, $n\geq 1$, and $H(t) := 2t \cdot \ind_{[0, \frac{1}{2}]}(t) + 2(1-t)\cdot \ind_{(\frac{1}{2}, 1]}(t)$, see \cite{Ci60} and \cite{SPB14}. 
Now, we define $H_n(t) := H(2^j t -k)$, $n\geq 1$ and $H_0(t):=t$. 
L\'evy's construction of Brownian motion is then formally given by 
\begin{equation}\label{d:BrownscheBewegung}
L^J_t := \sum_{n=0}^{2^J} Z_n \int_0^t h_n(s) ds = \sum_{n= 0}^{2^J} \la_n \cdot Z_n \cdot H_n(t), 
J\in \NN_0, 
\end{equation}
where $\la_n = 2^{-\lfloor \log_2(n)\rfloor /2-1}$ and an i.i.d.~sequence $(Z_n)_{n\in \NN}$, $Z_n\sim N(0,1)$. 
With the help of \eqref{e:Haarintegral} 
the $\PP$-a.s.~limit of \eqref{d:BrownscheBewegung} and in $L^2$ (see for instance \cite{MP10}) reads 
\begin{equation}\label{e:Levylimit}
W_t = \lim_{J\ra\infty} L^J_t.  
\end{equation}
For comparability of our results below on we define the ``tooth'' function $G_j$ of the $j$-th generation by 
\[
L^J_t = \sum_{j= 0}^J  G_{j}(t), \qquad \mbox{ where } \qquad G_{j}(t) :=  \sum_{k=2^{j-1}}^{2^j-1} \la_k \cdot Z_k \cdot H_k(t). 
\]

\noindent \textbf{Almost sure uniform convergence over $[0, 1]$ with mean deviation frequency: } Our first main result quantifies the almost sure uniform convergence in \eqref{e:Levylimit}. 

\begin{thm}[Rates of almost sure convergence of the L\'evy construction]~\label{thm:asL}\hfill
\begin{enumerate}
 \item[\textnormal{a)}] \textbf{Almost sure random upper bound: } For any $\alpha > 0$ 
 there is a sequence of nonnegative, $\PP$-a.s. non-increasing random variables $(\Lambda_J(\alpha))_{J\in \NN}$ such that\\ 
 \begin{align}\label{eq:zufaelligeschranke}
 \|L^J - W\|_\infty 
\lqq \sqrt{1+\alpha} \cdot  \max\{\Lambda_{J}(\alpha), 1\} 
 \cdot C_a \cdot \sqrt{J+1}\cdot 2^{-\frac{J}{2}}, \qquad \PP\mbox{-a.s. }\\[-3mm]\nonumber
 \end{align}
for all $J\gqq 1$, where $C_a=\sqrt{\tfrac{2}{\ln(2)}}\big(1 + \tfrac{1}{2\ln(2)}\big)\approx 2.9240.$\\
For each $\al>0$, $J\in \NN$ 
and $0 < q < (1+\al) J$ we have Gaussian moments for $\Lambda_J(\alpha)$\\
 \begin{align}
&\EE\Big[2^{q [\max\{\Lambda_{J}^2(\alpha),\, 1\}-1]}-1\Big]\nonumber\\
&\qquad \lqq \frac{2q}{((1+\al)\ln(2))^{3/2}}\Big(\frac{1}{(1+\al)J-q}+\frac{3}{2\ln(2)((1+\al)J-q)^{3/2}}\Big)   2^{-(1+\al)J} .\label{e:Lambdamoment}\\\nonumber
\end{align}  

 \item[\textnormal{b)}] \textbf{Almost sure deterministic upper bound: }
 For all $\alpha>0$ there exists an $\NN$-valued random variable $\jJ(\al)$ such that $\PP$-a.s.\\ 
  \begin{align}
 \|L^J - W\|_\infty \lqq \sqrt{1+\alpha} \cdot \sqrt{2\ln(2)} \cdot \sqrt{J+1}\cdot 2^{-\frac{J}{2}}\qquad \mbox{ for all } J\gqq \jJ(\alpha),\\\nonumber
 \end{align}
where 
\[
\PP(\jJ(\alpha) = k) = p_k \exp\Big(\sum_{\ell=k+1} \ln(1- p_\ell)\Big)
\]
for some sequence  $p_j = p_j(\alpha)\in (0,1), j\gqq k$, satisfying $2^{-(j+1)(1+\al)}\lqq p_j(\al)  \lqq 2^{-j(1+\al)}$. In particular, 
\begin{align*}
2^{-(1+\al)(k+1)}\cdot \exp\Big(- \tfrac{2^{-(1+\al)(k-1)})}{2^{1+\al}-1} \Big)
\lqq 
\PP(\jJ(\alpha) = k) \lqq  
2^{-(1+\al)k} \cdot \exp\Big(- \tfrac{2^{-(1+\al)k}}{2^{1+\al}-1}  \Big).\\[-3mm]
\end{align*}

 \item[\textnormal{c)}] \textbf{Step-by-step adapted error deviation frequency: } For any $\alpha> 0$, \[\e_j := \sqrt{1+\al} \cdot \sqrt{2 \ln(2)}\cdot \sqrt{j} \cdot 2^{-\frac{j}{2}}, \qquad j\in \NN,\] 
 and  $J, k\in \NN$ we have the following \textbf{deviation frequency quantification}. 
 
 For $$\oO_J := \#\{j\gqq J+1~\vert~\|L^j-L^{j-1}\|_\infty > \e_j\}$$ we have 
 \begin{align}
 \PP\big(\oO_J \gqq k\big)\lqq 2\cdot 2^{-\frac{\al}{2} (k+J+1)^2}.\label{e:mdfLevy}\\\nonumber
 \end{align}
 

 \item[\textnormal{d)}] \textbf{Fixed $\e$-error deviation frequency and last deviation: }
 We have 
 $\lim\limits_{J\ra\infty} L^J = W$ uniformly on $[0,1]$ 
 \textbf{almost surely with exponential mean deviation frequency and last deviation}, in the sense of \cite[Definition 1]{EHS26}: For any $\e>0$ the overlap statistic 
 \[
 \oO_{\e}:= \sum_{J=0}^\infty \ind\{\|L^J -W\|_\infty >\e\}\qquad \mbox{ and }\qquad 
 \m_{\e}:= \sum_{J=0}^\infty \ind\Big(\bigcup_{j\gqq J}  \{\|L^j -W\|_\infty >\e\}\Big)
 \] 
 satisfies for all $0 \lqq p < \frac{\ln(2)}{2}$ and $C := \frac{2-\sqrt{2}}{\ln(2)}\Big(1+\frac{1}{\ln(2)}\Big)\Big(\frac{1}{\sqrt{2}}+\sqrt{\pi}\Big)\cdot C_a\cdot c_1\approx 25.9489$ that 
\begin{align}
\EE[e^{p\oO_\e}]\lqq \EE[e^{p \m_\e}] <1+  \tfrac{C}{\e}\Big(\tfrac{\ln(2)}{2} - p\Big)^{-\frac{3}{2}},
\quad \label{e:LevOverlap}
\end{align} 
with $C_a$ given in item a) and $c_1$ in \eqref{e:c1}.
Furthermore, for all $k\in \NN$ and $\e>0$ we have that\\ 
 \begin{align}
\PP(\oO_\e \gqq k)
 \lqq  \PP(\m_\e \gqq k)\lqq e\left(1+\frac{C}{\e} \cdot k^{\frac{3}{2}}\right)2^{-\frac{k}{2}}, \qquad k \gqq 3. \label{eq:minimizeplevy}
  \end{align}
 \item[\textnormal{e)}] \textbf{Almost sure close to optimal rate $\delta_J$ with linear decay of the deviation frequency: } For all $\theta>0$, $J\in \NN_0$ and 
 \begin{equation}\label{e:Levydeltarate}\delta_J := 2^{-\frac{J}{2}} (J+1)^{\frac{3}{2}}\ln(J+1)^{1+\theta} \end{equation} 
 we have for 
 \[
 \oO_\delta := \#\{J\in \NN_0~\vert~ \|L^J-W\|_\infty>\delta_J\}
 \]
 that for $k\geq 1$ a.s.
 \begin{align}\label{e:LevydeltaMDF}
\PP(\oO_\delta \gqq k) \lqq k^{-1}\cdot \frac{C_a\cdot c_1}{\ln(2)^\theta}\cdot \Big(\frac{1}{2\ln(2)} + \frac{1}{\theta}\Big).
\end{align}

\end{enumerate}
 \end{thm}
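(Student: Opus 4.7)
The plan rests on a single structural observation: within the $j$-th generation the Schauder functions $H_k$, $2^{j-1}\leq k < 2^j$, have pairwise disjoint supports and common peak value $1$, so the tooth function satisfies the exact identity
$$\|G_j\|_\infty = 2^{-(j-1)/2-1}\cdot M_j, \qquad M_j := \max_{2^{j-1}\leq k < 2^j}|Z_k|,$$
and the $M_j$ are maxima of $2^{j-1}$ independent half-normals, independent across $j$. Combined with the telescoping identity $W-L^J = \sum_{j > J}G_j$ (valid $\PP$-a.s.\ uniformly by the classical convergence proof), all five assertions reduce to tail estimates for weighted sums of these independent maxima. The single-generation Mills-type estimate
$$\PP\big(M_j > \sqrt{2(1+\alpha)(j+1)\ln 2}\big) \lqq c_1\cdot 2^{j-1}\cdot 2^{-(1+\alpha)(j+1)}\big/\sqrt{(1+\alpha)(j+1)\ln 2}$$
is the only probabilistic ingredient, and it drives every quantitative rate.

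For part (a), I would define
$$\Lambda_J^2(\alpha) := \sup_{j\gqq J+1} \frac{M_j^2}{2(1+\alpha)(j+1)\ln 2},$$
so that $\|L^J-W\|_\infty \lqq \sqrt{1+\alpha}\,\Lambda_J(\alpha)\sum_{j\gqq J+1}2^{-(j-1)/2-1}\sqrt{2(j+1)\ln 2}$. A direct ratio/Abel-type estimate on the geometric tail sum yields the prefactor $C_a\sqrt{J+1}\,2^{-J/2}$ with the stated numerical constant $C_a = \sqrt{2/\ln 2}\,(1+1/(2\ln 2))$. The Gaussian moment bound \eqref{e:Lambdamoment} follows by combining the layer-cake formula $\EE[2^{q\max\{\Lambda_J^2,1\}}-2^q] = q\ln 2\int_1^\infty 2^{qs}\PP(\Lambda_J^2>s)\,ds$ with the union bound $\PP(\Lambda_J^2>s)\lqq \sum_{j\gqq J+1}\PP(M_j^2 > 2(1+\alpha)(j+1)s\ln 2)$ and the Mills estimate; the resulting double integral is a Gamma-type computation that produces the two explicit correction terms $\frac{1}{(1+\alpha)J-q}$ and $\frac{3/(2\ln 2)}{((1+\alpha)J-q)^{3/2}}$. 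Part (b) is then a direct corollary: $\jJ(\alpha)$ is the first $J$ with $\Lambda_J(\alpha)\lqq 1$, and its law factors through the independent events $\{\Lambda_J(\alpha)\lqq 1\} = \bigcap_{j\gqq J+1}\{M_j\lqq\sqrt{2(1+\alpha)(j+1)\ln 2}\}$ whose single-generation probabilities $p_j$ are sandwiched by two-sided Mills estimates in the stated range $2^{-(j+1)(1+\alpha)}\lqq p_j\lqq 2^{-j(1+\alpha)}$.

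For part (c), the increments $\|L^j-L^{j-1}\|_\infty = \|G_j\|_\infty$ are \emph{independent} across $j$, so $k$ simultaneous tail occurrences at positions $j_1<\dots<j_k$ in $\{J+1,J+2,\ldots\}$ factor multiplicatively; summing the bound $\prod_{i=1}^k 2^{-\alpha j_i}$ over all admissible configurations via the shift $j_i\mapsto j_i-(J+i)$ produces the quadratic-exponent bound \eqref{e:mdfLevy}. Parts (d) and (e) invoke the asymptotic quantitative Borel--Cantelli machinery of Subsection~\ref{ss:BCmain} (Proposition~1). For (d), I would first bound $\PP(\|L^J-W\|_\infty>\varepsilon)$ by splitting $\varepsilon$ generation-by-generation and applying the Gaussian max tail, yielding an estimate of the form $(\mathrm{const}/\varepsilon)\cdot(J+1)^{3/2}\cdot 2^{-J/2}$; substituting this into Proposition~1 with exponential weight $a_n = e^{pn}$ and evaluating $T_a = \sum_n a_n\sum_{m\gqq n}\PP(\cdots)$ delivers the finite exponential moment \eqref{e:LevOverlap} for $p<\ln(2)/2$, and Markov's inequality applied to $e^{p\oO_\varepsilon}$ with the choice $p = \ln(2)/2 - 3/(2k)$ optimizes the trade-off $(\ln(2)/2-p)^{-3/2}\cdot e^{-pk}$ to yield the tail \eqref{eq:minimizeplevy}. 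For (e), the choice $\delta_J$ in \eqref{e:Levydeltarate} makes $\sum_J\PP(\|L^J-W\|_\infty>\delta_J)<\infty$ with an explicit tail sum of order $n^{-1}\ln(n)^{-\theta}$, after which Markov's inequality applied to the overlap statistic directly yields the linear decay \eqref{e:LevydeltaMDF} with the constant $\frac{C_a c_1}{\ln(2)^\theta}(\frac{1}{2\ln 2}+\frac{1}{\theta})$.

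The main technical obstacle will be the constant-tracking in part (d): the numerical value $C\approx 33.05$ and the sharp cutoff $p<\ln(2)/2$ are both consequences of how $\varepsilon$ is truncated across generations, how the $\sqrt{j+1}$ factor from the Gaussian max propagates into the tail sum $\sum_{m\gqq n}\PP(\cdots)$, and how these balance against the exponential weight $a_n = e^{pn}$. The $k^{3/2}$ correction in \eqref{eq:minimizeplevy} must match the $(\ln(2)/2-p)^{-3/2}$ singularity after optimizing $p$, which is the one place where a suboptimal truncation choice would irreversibly degrade both the constant and the rate.
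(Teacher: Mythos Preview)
Your overall architecture matches the paper closely for parts (a), (b) and (e). Two points deserve correction.

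\textbf{Part (d): the probability bound has the wrong power of $J+1$.} You claim $\PP(\|L^J-W\|_\infty>\e)\lesssim (\mathrm{const}/\e)\,(J+1)^{3/2}\,2^{-J/2}$, obtained by ``splitting $\e$ generation-by-generation''. But this exponent $3/2$ is inconsistent with the $(\ln(2)/2-p)^{-3/2}$ singularity you yourself invoke two lines later: feeding $(J+1)^{3/2}2^{-J/2}$ into $\sum_n e^{pn}\sum_{m\gqq n}\PP(A_m)$ produces $(\ln(2)/2-p)^{-5/2}$, and after the Markov optimization you would get $k^{5/2}$ rather than $k^{3/2}$ in \eqref{eq:minimizeplevy}. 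The paper does \emph{not} split $\e$ directly. Instead it routes through part (a) and Corollary~\ref{cor:L2}: the random bound \eqref{eq:zufaelligeschranke} is squared and integrated to give $\EE[\|L^J-W\|_\infty^2]^{1/2}\lqq C_ac_1\sqrt{J+1}\,2^{-J/2}$, and a single Markov inequality then yields $\PP(\|L^J-W\|_\infty>\e)\lqq \e^{-1}C_ac_1\sqrt{J+1}\,2^{-J/2}$. The power $1/2$ is what makes the constants $C$ and the exponent $3/2$ in \eqref{e:LevOverlap}--\eqref{eq:minimizeplevy} come out as stated, and it is also what produces the summable $(J+1)^{-1}\ln(J+1)^{-(1+\theta)}$ in part (e). Your direct splitting idea is not developed enough to see whether it can recover $\sqrt{J+1}$; the $L^2$ detour is both cleaner and the actual source of the numerical constant $C\approx 33.05$.

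\textbf{Part (c): the combinatorial sum does not give the stated quadratic form.} Your shift $j_i\mapsto j_i-(J+i)$ yields $\sum_i j_i = kJ+\frac{k(k+1)}{2}+\sum_i m_i$ with $m_i\gqq 0$, so after summing the geometric tail you obtain a bound of the shape $C^k\,2^{-\alpha(kJ+k(k+1)/2)}$. This has the right $k^2$ and $kJ$ behaviour but lacks the $(J+1)^2$ term present in $2^{-\frac{\alpha}{2}(k+J+1)^2}$; for fixed $k$ your bound decays only like $2^{-\alpha kJ}$ in $J$, not $2^{-\alpha J^2/2}$. The paper instead applies Proposition~\ref{prop: asymptotic} (the MGF route for independent events, specialised in Example~\ref{ex: indepexp}) to the independent events $A_j=\{\|G_j\|_\infty>\e_j\}$ with $\PP(A_j)\lqq\frac{1}{2}\,2^{-\alpha j}$, obtaining $\EE[e^{r\oO_J}-1]\lqq 2\exp(-r(J+1)+r^2/(\alpha\ln 2))$ and then optimising $r$; the completed square is what produces the symmetric form $(k+J+1)^2$. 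Your combinatorial route is a valid alternative proof of a quadratic-exponent bound, but not of the bound exactly as written.
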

 \bigskip
\noindent The proof of Theorem~\ref{thm:asL} a) is given in Appendix~\ref{ss:randomupper} combined with Appendix~\ref{a:fastsicherextrem}. 
Theorem~\ref{thm:asL} c) is shown in Appendix~\ref{ss:stepbystep} using Proposition~\ref{prop: asymptotic}.  Theorem~\ref{thm:asL} b) is proved in Appendix~\ref{ss:detupperbound}, Theorem~\ref{thm:asL} d) in Appendix~\ref{ss:exmdf} and Theorem~\ref{thm:asL} e) in Appendix~\ref{a:fsoptimaleRate}. 

\begin{rem}
\begin{enumerate}
 \item To our knowledge inequalities of type \eqref{eq:zufaelligeschranke} have been known in the literature only asymptotically as $n\ra\infty$, that is, starting from a random index $N = N(\omega)$, see for instance \cite[Section 3.2, p.31, last display]{SPB14} or equivalently with an unspecified random upper bound, see \cite[Lemma 3.2]{Ste01}. We give a close to optimal numerical upper bound $C_a$ and the Gaussian integrability of the random part in \eqref{e:Lambdamoment}. For a more structural viewpoint on estimates of this type, we also refer to \cite[Section 2.4]{Ta14}. 
  \item After simulating $J$ generations of teeth, 
  the probability of having $k$ or more error steps of minimal size $\sqrt{1+\al} \cdot \sqrt{2 \ln(2)}\cdot \sqrt{j} \cdot 2^{-j/2}$ in item c), 
 decreases as fast as a Gaussian tail both in $J$ and $k$. 
 A fortiori, our result yields that the probability that the last index of such an error is more than $k$ steps ahead decays with the same rate.
 \item We stress the tradeoff between rates in $\delta_J$ and the mean deviation frequency and maximum.  
 For a fixed error bar $\delta_J = \e$ we obtain the fastest decay of the deviation frequencies in \eqref{eq:minimizeplevy} of Theorem~\ref{thm:asL}, d). 
 On the other hand for a close to optimally small rate of almost sure convergence $\delta_J$ 
 in \eqref{e:Levydeltarate} of Theorem~\ref{thm:asL}, e) 
 we obtain with the barely linear decay of the deviation frequencies in \eqref{e:LevydeltaMDF}. \\
\medskip 
 Note that virtually all mixed regimes between suboptimal a.s.~rates of convergence $\delta_n \lqq \e_n$ and higher order mean deviation frequencies (resp.~the mean index of the last deviation) can be obtained with the same technique. This freedom is described as the ``tradeoff between the mean error tolerance and the mean error maximum (resp. frequency) in \cite{EHS26}''.
\end{enumerate}
\end{rem}

A simple consequence of Theorem~\ref{thm:asL} a) yields a consistently comparable result to Wiener's construction and the Kosambi-Karhunen-Lo{\`e}ve expansion in \eqref{e:L2upperbound} and shows near optimality (up to a factor $\sqrt{J+1}$) in $L^2(\Omega, L^\infty([0,1]))$. 

\begin{cor}\label{cor:L2}
Under the assumptions of Theorem~\ref{thm:asL}, we have for $J\in \NN$ 
\begin{align*}
\EE\Big[ \|L^J  - W \|_\infty^2\Big]^\frac{1}{2} \lqq C_a\cdot c_{\min\{J,4\}} \cdot \sqrt{J+1}\cdot  2^{-\frac{J}{2}}
\end{align*}
where \begin{equation}\label{e:c1}
c_1 \approx 1.7338,\quad c_2\approx 1.3721, \quad c_3 \approx 1.1906,
\end{equation}
and $$c_4=\sqrt{1+\frac{1}{8\log(2)^{5/2}}\Big(\frac{1}{4}+\frac{3}{16\log(2)}\Big)}\approx 1.0783,$$

and $C_a$ was given in Theorem \ref{thm:asL} a). Approximate values for the constants are $C_a\cdot c_1\approx 5.0695, C_a\cdot c_2\approx 4.0119, C_a\cdot c_3\approx 3.4814$ and $C_a\cdot c_4\approx 3.1528$.

 \end{cor}
\noindent The proof of Corollary~\ref{cor:L2} is given in Appendix~\ref{ss:randomupper}. 
Obvious extensions are valid for any $L^p$ distance, $p\gqq 1$.

\medskip 
 \subsection{\textbf{Deviation frequencies from continuity and H\"older continuity}}\label{ss:maincontinuity} \hfill\\ 
 
 \noindent Apart from direct constructions of stochastic processes, such as in L\'evy's construction of the preceding subsection, or the general theory of processes \cite{DM82, Pr04}, one of the standard tools to show the almost sure H\"older continuity of paths of stochastic processes (and in particular of Brownian motion) is the Kolmogorov-Chentsov theorem. Following \cite{CMY04}, the result was first presented orally in 1934 by Kolmogorov in the Seminar of the University of Moscow, later published by Slutsky \cite{Sl37} and extended by Kolmogorov and Chentsov in \cite{Ch56}. 
 The result is stated in classical monographs, 
 such as \cite{BS02, Ch82, GS04, IW81, Kal97, Kle08, KS98, KR18, Pr04, RW00, RY99} and in most of them used to show the continuity of the Brownian motion and the Brownian Bridge. In the sequel we give a quantitative version for processes  on $[0, 1]$ and for random fields with parameters in a bounded (open, connected, nonempty) domain $\dD\subseteq \RR^d$, which was given in \cite{Ku04} and goes back to the original article by Totoki~\cite{To61}.

 \subsubsection{\textbf{Almost sure continuity of Brownian paths (following J. Doob): }}

 We start with a quantitative version of the ad-hoc continuity result 
 for the special case of Brownian motion in \cite[Theorem, p.~577]{Do84}. 
 Similar explicit calculations are found in \cite{HWY92}.

  \begin{thm}\label{thm:Doob}
  Let $X = (X_t)_{t\in [0, 1]}$ be a scalar Brownian motion in law, that is, $X_0=0$ $\PP$-a.s., $X_{t}-X_{s}\sim N(0,t-s)$ for $s<t$, and $X$ has independent increments. Then there is a continuous version of $X$. In particular, we have the following \textbf{deviation maximum quantifications}: 
  \begin{enumerate}
  \item For all $k\gqq 1$, $\e>0$, and setting
  \[
  \oO_\e := \#\Bigg\{n\in \NN~\vert~ \sup_{\substack{r,s\in\QQ\cap [0,1]\\ |s-r|\lqq\frac{1}{n}} } |X_s-X_r| \gqq 2\e\Bigg\}, \quad \mbox{ and }
  \]
  \[\m_\e := \sup\Bigg\{n-1~\vert~ n\in \NN, \sup_{\substack{r,s\in\QQ\cap [0,1]\\ |s-r|\lqq \frac{1}{n}}} |X_s-X_r| \gqq 2\e\Bigg\},\]
we have 
 \begin{align}\label{e:Doob}
\PP\big(\oO_\e \gqq k\big)\lqq \PP\big(\m_\e \gqq k\big)\lqq e\bigg(1+K\frac{(\e^2+\sqrt{\pi}) e^{\frac{\e^2}{4}}(e^{\frac{\e^2}{4}}-1)}{\e^5} \cdot k^{\frac{3}{2}}\bigg)\cdot e^{-\frac{\e^2}{4}k}
\end{align}
for $k> \frac{4}{\e^2}$ where $K= 16 \big(\tfrac{1}{\sqrt{2}}+\sqrt{\pi}\big) \approx 39.6730$. 

\item For the scale $\epsilon = (\e_n)_{n\in \NN}$ with $\e_n := 2 \sqrt{ \tfrac{\theta\ln(n+1)}{n}}$, $\theta>2$ 
and 
\begin{align}\label{e:Doob2}
\oO_\epsilon := \# \Bigg\{n\in \NN~\vert~ \sup_{\substack{r,s\in\QQ\cap [0,1]\\ |s-r|\lqq \frac{1}{n}}} |X_s-X_r| \gqq 2\e_n\Bigg\} 
\end{align}
we get a barely linear decay of the tail of the mean deviation frequency  
 \begin{align}\label{e:Doob3}
\PP\big(\oO_\epsilon \gqq k\big)\lqq k^{-1}\cdot\frac{4 \zeta(\theta-1)}{\sqrt{\theta\ln(2)}}, 
\end{align}
where $\zeta(t) = \sum_{n=1}^\infty n^{-t}$ is Riemann's zeta function. 
Further, for $-1<p<\theta-3$, we get
\begin{align}\label{e:Doob4}
\PP\big(\oO_\epsilon \gqq k\big)\lqq\PP(\m_{\epsilon}\gqq k)\lqq k^{-p+1}\cdot \left(\frac{4}{\sqrt{\theta \ln(2)}}(\theta-1) \zeta(\theta-1)+\ind(\{p<0\})\right).
\end{align}
\end{enumerate}
 \end{thm}
 \noindent 
The proof is given in Appendix~\ref{ss:Doob}. 
    
   \begin{rem}
 Note that \eqref{e:Doob2} yields a rather precise rate of convergence (as compared to L\'evy's modulus of continuity in Theorem~\ref{thm:Levymodulus}) up to a multiplicative logarithm and the constant $\theta$ vs.~$2$. However, the mean deviation frequency in \eqref{e:Doob3} is barely linear, in contrast to the exponential and Gumbel decay of the mean deviation frequencies in Theorem~\ref{thm:Levymodulus}. 

 This can be modified by the tradeoff explained in \cite[Lemma~2]{EHS26}, we could improve the rate of the tail decay of $\m_\e$ by going over to suboptimal a.s.~rates $0 < \tilde \e_n < \e_n$. Note that in \eqref{e:Doob3} $\m_\epsilon$ is replaced by the deviation frequency $\oO_\epsilon$.
 \medskip      
 
We also refer to \cite[Section 1.2.6), p.12]{Fr83} with additional calculations concerning the modulus of continuity discussed in Appendix~\ref{ss:Lmc}.  
   \end{rem}

 \medskip

 \subsubsection{\textbf{H\"older continuous versions of a stochastic process (Kolmogorov, Chentsov): }}
 
\begin{thm}[Kolmogorov, Chentsov]\label{thm:KC} 
Consider a stochastic process $X = (X_t)_{t\in [0, T]}$ 
with values in a separable normed space $(B, \|\cdot \|)$ on a given 
probability space $(\Omega, \aA, \PP)$. Let $X$ satisfy the following 
moment condition. There are positive constants $\alpha, \beta$ and $C$ such that 
\[
\EE[\|X_t - X_s\|^{\alpha}] \lqq C\,|t-s|^{1+\beta}, \qquad \mbox{ for all }s,t\in [0,T]
.
\]
\begin{enumerate}
 \item Then there exists a continuous modification $\ti X = (\ti X_t)_{t \in [0, T]}$ of $X$ which has locally H\"older continuous paths for any H\"older exponent $\gamma \in (0, \frac{\beta}{\alpha})$. 
\item We have the \textbf{last deviation quantification}: 

For 
$$
\m(\om) := \sup\Big\{n\in \NN_0~|~\max_{\ell\in \{1, \dots, 2^n\}} \|X_{T\frac{\ell}{2^n}}(\om)-X_{T\frac{\ell-1}{2^n}}(\om)\|> T^\gamma2^{-\gamma n}\Big\}
$$ we have a.s.~for all $n> \m$ that 
\[
\max_{\ell\in \{1, \dots, 2^n\}} \|X_{T\frac{\ell}{2^n}}-X_{T\frac{\ell-1}{2^n}}\|\lqq T^\gamma2^{-\gamma n}, 
\]
and for all $k\gqq 1$ 
 \begin{align}
 \PP\big(\m
 \gqq k\big)
 &\lqq 2e^{\frac{9}{8}} 
 \cdot \Big[k\big(\frac{CT^{1+\beta-\gamma\alpha}(2^{(\beta -\alpha\gamma)}-1)}{1-2^{-(\beta - \alpha \gamma)}} + 1\big) +1\Big]\cdot 2^{-k(\beta-\alpha\gamma)}.  
 \label{e:Chentsovbound2}
 \end{align}
\end{enumerate}
\end{thm}
\noindent The proof is given in Appendix~\ref{ss:KC}. Again we refer for possible refinements in the tradeoff between the possible values $\e_n$ replacing $T^\gamma2^{-\gamma n}$ and different kinds of tail decays of $\m$ to \cite[Lemma~2]{EHS26}. Clearly, the $\max$ in the definition of $\m$ can be directly replaced by the weaker $\#$ operator, which yields estimates for the respective deviation frequency $\oO$.

\begin{exm}\label{ex:BM}
For $X$ being a scalar Brownian motion, 
for any $\al>2$, we set $\beta := \frac{\al-2}{2}$ and $D_\al := \EE[|\nN|^\al] = \frac{2^\alpha}{\sqrt{\pi}} \Gamma(\frac{\al+1}{2})$ for $\nN\sim N(0,1)$. 
It is not hard to see that 
\begin{align*}
\EE[|X_t - X_s|^{\al}] =D_\al |t-s|^{1+ \beta} \qquad \mbox{ for all } s, t\in [0,1],
\end{align*}
and the tradeoff statements of Theorem \ref{thm:Doob}, and Theorem \ref{thm:KC} for any $\gamma\in (0,\tfrac{1}{2}-\tfrac{1}{\alpha})$ apply.
 \end{exm}

\begin{exm}\label{ex:fBM}
It is obvious how to generalize the preceding example to fractional Brownian motion with Hurst index $H\in (0,1)$. 
For $X$ being such a fractional Brownian motion, we take $\alpha>\frac{1}{H}$, set $\beta:=H\alpha-1$  and take the same constant $D_\alpha$ as before. Then,  again,\begin{align*}
\EE[|X_t - X_s|^{\al}] =D_{\al} |t-s|^{1+ \beta} \mbox{ for all } s, t\in [0,1],
\end{align*}
and the tradeoff statements of Theorem \ref{thm:Doob}, and Theorem \ref{thm:KC} for any $\gamma\in (0,H-\tfrac{1}{\alpha})$ apply.
 \end{exm}

\medskip 
\subsubsection{\textbf{H\"older continuous versions of a random field (Kolmogorov, Totoki): }}\hfill\\[-3mm]

\noindent  We present a quantitative version of the result in \cite[Theorem 4.1]{Ku04}. Consider the lattice 
\[
\lL_n := \Big\{\Big(\frac{i_1}{2^n}, \dots, \frac{i_d}{2^n}\Big)~\big|~i_1, \dots, i_d\in \ZZ\Big\}, \quad \mbox{its (dense) union, } \quad 
\lL := \bigcup_{n\in \NN} \lL_n,
\]
and a bounded domain $\dD\subseteq \RR^d$ and a normed space $(B, \|\cdot\|)$. For any $f: \lL \cap \dD \ra B$ define 
$\Delta_n(f) := \max_{\substack{x,y \in \lL_n \cap \dD\\|x-y| = 2^{-n}}} \|f(x) - f(y)\|$ and 
$\Delta_n^\gamma(f) := 2^{n\gamma}\cdot \Delta_n(f)$. 

\begin{thm}[Kolmogorov, Totoki]\label{thm:KolmoToki}
Consider a bounded domain $\dD\subseteq \RR^d$ and a random field $X = (X(x))_{x\in \dD}$ with values in a normed space $(B, \|\cdot\|)$ over a given probability space $(\Omega, \aA, \PP)$. Assume that there exists positive constants $\gamma, C$ and $\alpha$ such that 
\begin{align}\label{e:KolmoToki}
\EE\Big[\|X(x)-X(y)\|^{\alpha}\Big] \lqq C |x-y|^{d+\beta},\qquad x, y\in \dD.  
\end{align}
Then $X$ has a $\gamma$-H\"older continuous modification $\ti X$ for any $\gamma \in (0, \frac{\beta}{\alpha})$. Moreover, we have the following 
\textbf{deviation frequency quantification}: 
For any $\delta>0$ such that $\beta -\alpha\gamma-\delta\alpha>0$ we have 
for $\m_{\gamma}(\omega) := \sup\{n-1~\vert~n\in \NN, \Delta_n^\gamma(X)> 2^{-\delta n}  \}$ that 
\[
\Delta_n^\gamma(X)\lqq 2^{-\delta n} \qquad \mbox{ for all } n>\m_\gamma \qquad \PP\mbox{-a.s.} 
\]
At the same time 
\begin{align*}
\PP(\m_\gamma\gqq k)
 &\lqq 2e^{\frac{9}{8}} \cdot [k(2^{d +\beta -\alpha\gamma-\delta\alpha} \text{\emph{vol}}(\dD)+1)+1]\cdot 2^{-k(\beta -\alpha\gamma-\delta\alpha)}, \quad k\gqq 1.
\end{align*}
\end{thm}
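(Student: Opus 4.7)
The strategy mirrors the proof of Theorem~\ref{thm:KC}: reduce to a union-bound-plus-Markov estimate on lattice neighbors at each dyadic scale, feed the resulting geometric tail into the asymptotic quantitative Borel--Cantelli lemma of \cite{EH22} (Proposition~\ref{prop: asymptotic}), and recover the H\"older modification by the classical chaining/extension argument.

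First I would count lattice pairs. Since $\dD$ is bounded, one has $\#(\lL_n \cap \dD) \lqq \mbox{vol}(\dD)\cdot 2^{nd}$ (up to boundary terms absorbed into a generic constant), and every $x\in\lL_n$ has at most $2d$ neighbors $y\in\lL_n$ with $|x-y|=2^{-n}$. This packs the number of ordered neighbor pairs into $c_d\cdot \mbox{vol}(\dD)\cdot 2^{nd}$. Then the moment hypothesis \eqref{e:KolmoToki} combined with Markov's inequality yields, for each such neighbor pair,
\[
\PP\bigl(\|X(x)-X(y)\|\gqq 2^{-(\gamma+\delta)n}\bigr)\lqq C\,2^{-n(d+\beta)}\cdot 2^{n\alpha(\gamma+\delta)}.
\]
A union bound over the pairs gives the key estimate
\[
p_n := \PP\bigl(\Delta_n^\gamma(X)\gqq 2^{-\delta n}\bigr) \lqq C\cdot \mbox{vol}(\dD)\cdot 2^{-n(\beta-\alpha\gamma-\delta\alpha)},
\]
which is geometrically summable thanks to the hypothesis $\beta-\alpha\gamma-\delta\alpha>0$.

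Next I would feed these $p_n$ into the asymptotic quantitative Borel--Cantelli lemma (Proposition~\ref{prop: asymptotic}), choosing the same geometric weights $a_n$ as in the proof of Theorem~\ref{thm:KC}, so that $T_a=\sum_n a_n\sum_{m\gqq n}p_m$ is finite and $\sS_a^{-1}(k)$ produces the factor $2^{-k(\beta-\alpha\gamma-\delta\alpha)}$. The telescoping of the geometric tail $\sum_{m\gqq n} p_m$ contributes the extra factor $2^{d+\beta-\alpha\gamma-\delta\alpha}\cdot\mbox{vol}(\dD)$, and the polynomial factor $k(\cdot)+1$ inside the bracket together with the prefactor $2e^{9/8}$ emerges from the exact same minimization step that produced the constants in \eqref{e:Chentsovbound2}. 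This yields exactly the stated mean deviation bound.

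Finally, for the H\"older modification, I would invoke the standard Kolmogorov--Totoki chaining argument on $\lL\cap\dD$: by the first Borel--Cantelli lemma applied to $(p_n)$ there is a $\PP$-a.s.~finite random index $N(\omega)$ such that $\Delta_n(X)\lqq 2^{-\gamma n}\cdot 2^{-\delta n}$ for all $n\gqq N$; for any two dyadic points $x,y\in\lL\cap\dD$ with $|x-y|\lqq 2^{-N}$ one writes the segment $x\to y$ as a telescoping sum of axis-aligned hops between successive dyadic refinements, bounds each layer by $\Delta_n$, and sums the resulting geometric series to get $\|X(x)-X(y)\|\lqq C_\omega\,|x-y|^\gamma$; the modification $\ti X$ is the unique continuous extension from $\lL\cap\dD$ to $\dD$, and a separability argument identifies $\ti X(x)=X(x)$ $\PP$-a.s.~at every $x$.

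The main obstacle is bookkeeping the precise constant $2^{d+\beta-\alpha\gamma-\delta\alpha}\cdot\mbox{vol}(\dD)$ appearing inside the bracket; this requires aligning the lattice-counting prefactor, the geometric sum of the tails $\sum_{m\gqq n}p_m$, and the specific choice of weights from Proposition~\ref{prop: asymptotic}, so that the cumulative constants collapse into the clean form stated. A minor secondary issue is that the chaining to obtain H\"older continuity uses axis-aligned $\Delta_n$ only, so one has to split a general displacement in $\lL$ into at most $d$ coordinate-wise pieces; this multiplies the H\"older constant by a harmless factor depending only on the dimension and on $\gamma$.
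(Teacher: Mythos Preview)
Your approach is essentially identical to the paper's: union bound over lattice neighbor pairs plus Markov's inequality yields $p_n\lqq 2^d\,\mbox{vol}(\dD)\cdot 2^{-n(\beta-\alpha\gamma-\delta\alpha)}$, this geometric tail is plugged into the exponential-weight Borel--Cantelli estimate, and Lemma~\ref{lem:optimal} with $M=2^{d+\beta-\alpha\gamma-\delta\alpha}\,\mbox{vol}(\dD)$ and $b=2^{-(\beta-\alpha\gamma-\delta\alpha)}$ produces the stated bound; for the H\"older modification the paper simply invokes Lemma~\ref{lem:Kunita} (Kunita's chaining inequality), which is exactly the dyadic telescoping you describe.

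One correction: the tool you invoke is \emph{not} Proposition~\ref{prop: asymptotic}. That proposition requires the events to be independent (it uses the Schuette--Nesbitt formula), which the events $\{\Delta_n^\gamma(X)\gqq 2^{-\delta n}\}$ are not. What you actually describe---the weighted sum $T_a=\sum_n a_n\sum_{m\gqq n}p_m$ with geometric weights and the inversion via $\sS_a^{-1}$---is Lemma~\ref{lem:BC1} (Theorem~1 in \cite{EH22}), specialised in Lemma~\ref{lem:Alexp} and Corollary~\ref{cor:Alexp}; this is precisely what the paper applies here and in Theorem~\ref{thm:KC}.
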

\noindent The proof is given in Appendix~\ref{ss:KT} combined with Lemma~\ref{lem:optimal} for $M = 2^{d+\beta -\alpha \gamma-\delta \alpha}\, \mbox{vol}(\dD)$ in Appendix~\ref{a:optimal}. 
Possible refinements in the tradeoff between the possible values $\e_n$ replacing $2^{-\gamma n}$ and different kinds of tail decays of $\m$ can be read off from \cite[Lemma~2]{EHS26}. There it is shown that the $\max$ in the definition of $\m$ can be directly replaced by the weaker $\#$ operator, which yields estimates for the respective deviation frequency $\oO$. 

\begin{exm}[Brownian sheet]\label{ex:BS}
Let $(X_{(t,s)})_{t,s\in [0,1]}$ be a Brownian sheet. Then, for $\alpha>4$, we get
\begin{align*}
\EE\left[\left\|X_{(t,s)}-X_{(t',s')}\right\|^\alpha\right]=D_\alpha|ts-2\min(t,t')\min(s,s')-t's'|^\frac{\alpha}{2}.
\end{align*}
Since a constant $c$ can be found such that $|ts-2\min(t,t')\min(s,s')-t's'|\lqq c|(t,s)-(t',s')|$, we get that

\begin{align*}
\EE\left[\left\|X_{(t,s)}-X_{(t',s')}\right\|^\alpha\right]\lqq D_\alpha c^{\frac{\alpha}{2}}|(t,t')-(s,s')|^{2+\frac{\alpha-4}{2}}.
\end{align*}
Hence we can proceed again as in the case for Brownian motion and we obtain, setting $\beta=\frac{\alpha-4}{2}$, the H\"older continuity of $X$ on $[0,1]$ for all exponents $\gamma\in (0,\frac{1}{2})$ (since the limit $\lim\limits_{\alpha\to\infty}\frac{\beta}{\alpha}=\lim\limits_{\alpha\to\infty}\frac{\alpha-4}{2\alpha}=\frac{1}{2}$) as well as the other subsequent results.

\end{exm}

\medskip 

\subsection{\textbf{Deviation frequencies of fine continuity properties}}\label{ss:mainfine}
\subsubsection{\textbf{L\'evy's modulus of continuity }}\label{ss:Lmc}\hfill\\ 

\noindent One of the fascinating features of Brownian sample paths is the precise knowledge of their continuity properties, such as its (global) modulus of continuity, established in \cite{Lev37}. Being stated like that in textbooks such as 
\cite{KS98, MP10, BS02, RY99, IMcK74, McK69, Kn81, Kle08, Kal97, SPB14} or \cite{RW00}, one might naively assume that the lower and the upper bound might behave somewhat symmetric. This, however, is completely wrong. A noteworthy exception is \cite[Chapter 5.1, Theorem 5.3]{Ste01}, where the author stresses this asymmetry in particular. We refer to \cite{IMcK74} for different types of expansions. 

We see below, that the number of upward infringements of the asymptotic rate are extremely more unlikely than downward infringements and essentially exhibits a doubly exponentially Gumbel type decay. Such a behavior is finally not surprising in the light of extreme-value distributions, while downward infringements die out with merely an exponential decay, coming from the independence of increments. This asymmetry is hidden in the original statement, let alone being quantified.

\begin{thm}\label{thm:Levymodulus}
For $\mu: (0, 1]\ra (0, \infty)$ given by $\mu(\delta) := \sqrt{2\delta \ln(1/\delta)}$, $\delta>0$ we have 
\begin{align*}
\limsup_{\delta\searrow 0} \max\limits_{\substack{0\lqq s < t\lqq 1\\|t-s| \lqq \delta}}|W_s -W_t| \cdot \mu(\delta)^{-1} = 1\qquad \PP\mbox{-a.s.} 
\end{align*}
In addition, we have the following quantitative statements: 
\begin{enumerate}
 \item For any $\theta\in (0,1)$, $0 < \eta < \theta$ and any $0 \lqq p < \frac{1}{e^{\eta} \sqrt{4\pi (1-\theta)}}$ and 
 \[
 \m_{\theta} := \sup\Big\{n -\lceil\tfrac{1}{2(1-\theta)}\rceil~\vert~n\in \NN, \frac{\max_{1\lqq j\lqq \lfloor e^{n}\rfloor} |W_{\frac{j}{e^n}}- W_{\frac{j-1}{e^n}}|}{\mu(e^{-n})} > \sqrt{1-\theta}\Big\}, 
 \]
 and 
 \[\oO_{\theta} := \#\Big\{n\gqq \tfrac{1}{2(1-\theta)}~\vert~\frac{\max_{1\lqq j\lqq \lfloor e^n\rfloor} |W_{\frac{j}{e^n}}- W_{\frac{j-1}{e^n}}|}{\mu(e^{-n})} > \sqrt{1-\theta}\Big\},\]
for a.a.~$\omega\in \Omega$ we have that  $n\gqq \m_{\theta}(\omega)$ implies 
 \[
 \frac{\max_{1\lqq j\lqq \lfloor e^{n}\rfloor} |W_{\frac{j}{e^n}}- W_{\frac{j-1}{e^n}}|}{\mu(e^{-n})} \lqq \sqrt{1-\theta}. 
 \]
 In addition, there is a constant $K_1 = K_1(\eta, p)>0$ such that for all $k\gqq 1$ 
\begin{align*}
\PP\big(\oO_{\theta} \gqq k\big)\lqq \PP\big(\m_{\theta} \gqq k\big) \lqq  K_1\cdot \exp( -p \exp(\eta k)).
\end{align*}
\medskip 
 \item For any $\theta \in (0,1)$ and $\e > \frac{1+\theta}{1-\theta}-1$ and $\rho = (1-\theta)(1+\e)^2-(1+\theta)$ 
there is a positive constant $K_{\e, \theta}>0$ such that for 
\[
 \m_{\e, \theta} :=\sup\Big\{n -\lceil\tfrac{1+\ln(1+\theta)}{1-\theta}\rceil~|~n\in \NN, \max_{\substack{0\lqq i < j \lqq \lfloor e^n\rfloor \\ 1\lqq j-i \lqq \lceil e^{n\theta}\rceil }} 
\frac{|W_{\frac{j}{e^n}}(\omega) -W_{\frac{i}{e^n}}(\omega)|}{\mu((j-i)e^{-n})} \gqq 1+ \e\Big\}
\]
and 
\[ \oO_{\e, \theta}:= \#\Big\{n\gqq \tfrac{1+\ln(1+\theta)}{1-\theta}~|~ \max_{\substack{0\lqq i < j \lqq \lfloor e^n\rfloor \\ 1\lqq j-i \lqq \lceil e^{n\theta}\rceil }} 
\frac{|W_{\frac{j}{e^n}}(\omega) -W_{\frac{i}{e^n}}(\omega)|}{\mu((j-i)e^{-n})} \gqq 1+ \e\Big\},\]
we have for a.a.~$\omega\in \Omega$ that for all $n\gqq  \m_{\e, \theta}(\omega)$ 
\begin{align*}
 \max_{\substack{0\lqq i < j \lqq \lfloor e^n\rfloor \\ 1\lqq j-i \lqq \lceil e^{n\theta}\rceil }} 
\frac{|W_{\frac{j}{e^n}} -W_{\frac{i}{e^n}}|}{\mu((j-i)e^{-n})} \lqq 1+ \e,
\end{align*}
moreover, it holds  
\begin{align}
\PP\big( \oO_{\e, \theta} \gqq k\big) &\lqq \PP\big( \m_{\e, \theta} \gqq k\big)\nonumber\\
&\lqq  
2e^{\frac{9}{8}} \cdot \Big[k \Big(\frac{K_{\e, \theta}}{1-e^{-\rho}} e^{-\rho(\lceil \frac{1+\ln(1+\theta)}{1-\theta}\rceil-1)} +1\Big) +1\Big]\cdot e^{-\rho k}, \qquad k\gqq 1.
\label{e:Lmclower}
\end{align}
\end{enumerate}

\end{thm}
\noindent The proof is given in Appendix~\ref{ss:Lmcproof}. 

\begin{rem}
Note that the convergence result depends strongly on the approximation. In this case the grid is chosen as $(i/e^n)_{i=0, \dots, \lfloor e^n\rfloor}$ in order to expose the extreme value Gumbel type law, as shown for instance in \cite[Example 3.5.4, p.174]{KEM98}.
\end{rem}

\medskip 
\subsubsection{\textbf{The quantitative blow up of Brownian secant slopes (Paley, Wiener, Zygmund)}}\hfill\\
    
\noindent In the original paper \cite{PWZ33}, its authors studied consequences of the finite variation properties 
applied to Fourier series, in particular, Wiener's construction of Brownian motion. A modern proof of this result is given for instance in \cite{KS98} or \cite{Ko07}. The support of Brownian motion on a subset of nowhere differentiable paths can be quantified in terms of an average secant blow up for a dyadic approximation. Note that $D^+f$ and $D^- f$ mean the right and left upper Dini derivative of a function $f$ \cite{KS98}. $D_+f$ and $D_- f$ denote the right and left lower Dini derivative, respectively. 

\begin{thm}[Paley, Wiener, Zygmund]\label{thm:PWZ}
The event 
\begin{align*}
\{\om \in \Omega~\vert~\mbox{ for each }t\in [0, 1] \mbox{ either }D^+ W_t(\omega) =  \infty  \mbox{ or }D_+ W_t(\omega) =  -\infty\} 
\end{align*}
contains an event $E \in \aA$ with $\PP(E) = 1$. 
Moreover, we have the following \textbf{deviation frequency and last deviation quantification}: For 
$c_\pi:= \frac{2^{10}}{\pi^2}$ and any $R\in (1, 2^{1/4})$ we have 
for 
\[
\m_R := \sup\Big\{n\in \NN_0~|~ ~\vert~\exists\,s\in [0, 1]:~\sup_{t\in [s-2^{-n}, s+2^{-n}]\cap [0,1]} \frac{|W_s-W_t|}{2^{-n}} \lqq   R^n\Big\}, 
\]
and 
\[\oO_R := \#\Big\{n\in \NN_0~|~ ~\vert~\exists\,s\in [0, 1]:~\sup_{t\in [s-2^{-n}, s+2^{-n}]\cap [0,1]} \frac{|W_s-W_t|}{2^{-n}} \lqq   R^n\Big\},\] 
that for all $\omega\in E$ and $n> \m_R(\omega)$ we have 
\[
\forall\,s\in [0, 1]:~\sup_{t\in [s-2^{-n}, s+2^{-n}]\cap [0,1]} \frac{|W_s-W_t|}{2^{-n}} > R^n
\]
and for all $k\in \NN$ 
\begin{align}
\PP\big(\oO_R\gqq k)\lqq \PP\big(\m_R\gqq k) \lqq 2 e^{\frac{9}{8}} \cdot \Big[k\Big(\frac{2c_\pi}{R^4}+1\Big)+1\Big]\cdot \Big(\frac{R^4}{2}\Big)^{ k}, \quad k\gqq 1.\label{e:PWZ}
\end{align}
\end{thm}
\noindent The proof is given in Appendix~\ref{ss:PWZ}. 

\begin{rem}
Again we refer to \cite[Lemma~2]{EHS26} for different tradeoffs between the a.s.~explosion rate of the secant and the integrability of the modulus of convergence.

The rate in terms of $R^4<2$ seems to be close to optimality since in the original proofs it is crucial to compare two neighboring intervals of the approximation and its respective left and right neighbor. 

The constant $c_\pi = \frac{1024}{\pi^2} \approx 103.74389$ appears naturally as the power $4$ of the constant $\frac{8}{\sqrt{2\pi}}$ coming from a Gaussian tail approximation of Brownian increments. 
\end{rem}

\medskip
\subsubsection{\textbf{The quantitative loss of monotonicity of Brownian sample paths}}\hfill\\ 

\noindent One of the characteristic features of a Brownian path is its roughness in the sense of non-monotonicity in any interval. Even stronger, paths exhibiting any point of increase or decrease only appear with probability zero. This goes back to the independence of the increments. See for instance \cite{KS98,MP10}.

\begin{thm}\label{thm: monotonicity}
For $\PP$-almost all $\om \in \Omega$ the path $W_\cdot(\om)$ is monotone in no subinterval of $[0, 1]$. 
In addition, we have the following \textbf{deviation frequency and last deviation quantification}. 
For 
\[
\m(\om) := \sup\Big\{n-1 ~\vert~ n\in \NN, \bigcap_{i=0}^{n-1 }\{\om \in \Omega~\vert~W_{\frac{i+1}{n}}(\omega) -  W_{\frac{i}{n}}(\omega)\gqq 0\Big\}
\]
and 
\[\oO(\om) := \#\Big\{n\in \NN ~\vert~ \bigcap_{i=0}^{n-1 }\{\om \in \Omega~\vert~W_{\frac{i+1}{n}}(\omega) -  W_{\frac{i}{n}}(\omega)\gqq 0\Big\}\] 
we have 
\begin{equation}\label{e:monotonicity}
\PP\big(\oO\gqq k)\lqq \PP(\m\gqq k) 
\lqq 2e^{\frac{9}{8}}\cdot (3k+1)\cdot 2^{-k} \qquad \mbox{ for all }
\qquad k\gqq 1. 
\end{equation}
\end{thm}
\noindent The proof is given in Appendix~\ref{ss:QLM}. 

\begin{rem}
Note that the proof - including the same rates - remains the same for all processes starting in $0$ with independent increments and symmetric increment distribution. Examples are symmetric $\alpha$-stable processes, such as the Cauchy process or symmetric compound Poisson processes. 
\end{rem}

\medskip

\subsubsection{\textbf{The a.s.~convergence to the quadratic variation}}\hfill\\

\noindent The finiteness of the quadratic variation and hence infinite path length 
\cite[Ch.1,Sec.9]{Lev54} and on a much deeper level its linear characterization 
\cite{Du10, MV11, Pr04} are principal features of Brownian motion. In the sequel we give deviation frequency quantifications of \cite[9.3 Theorem]{SPB14}, which goes back to \cite{Lev48}, and an improved version given in \cite[9.4 Theorem]{SPB14}, which uses the exponential integrability of the Gaussian increments and which goes back to \cite{Du73}.

We adopt the notation of \cite[Section 9.2]{SPB14}. For any $t>0$ consider a squence of finite partitions on $[0, t]$ as $(\Pi_n(t))_{n\in \NN}$ as follows: 
there is a squence $(k_n)_{n\in \NN}$, $k_n\in \NN$ and $k_n\nearrow \infty$ as $n\ra\infty$,   
\[
\Pi_n(t) := \{(t_1, \dots, t_n)~\vert~0 < t_1 < \dots < t_{k_n} = t\}
\]
and $|\Pi_n(t)| := \sup_{i=1, \dots, k_n} (t_{i}-t_{i-1})$. 

\begin{thm}\label{thm:QV1}
Given a scalar Brownian motion $(W_t)_{t\gqq 0}$ and, for any $t>0$, 
a sequence $(\Pi_n(t))_{n\in \NN}$ of finite partitions of $[0, t]$.  
If $\sum_{n=1}^\infty |\Pi_n(t)| <\infty$ then  
\begin{align*}
\mbox{Var}_2(B; t) = \lim_{n\ra\infty } \sum_{t_i \in \Pi_n(t)} (W_{t_i}-W_{t_{i-1}})^2  = t\qquad \PP\mbox{-a.s.}
\end{align*}
In particular, we have the following \textbf{deviation frequency and last deviation quantifications}: 
\begin{enumerate}
\item  For any $t>0$, any positive sequence $(a_n)_{n\in \NN}$ such that 
\[
K_1(t):= \sum_{n=1}^\infty a_n \sum_{m=n}^\infty |\Pi_m(t)|  < \infty  
\]
and any $\e>0$ we have for $\sS(N) = \sum_{n=1}^{N} a_n$ with $\sS(0) = 0$ that
\begin{align*}
\EE[\sS(\oO_\e(t))] \lqq \EE[\sS(\m_\e(t))]\lqq \frac{2t}{\e^2} \sum_{n=1}^\infty a_n \sum_{m=n}^\infty |\Pi_n(t)|\\
\qquad \mbox{ and }\qquad \PP(\oO_\e(t)\gqq k) \lqq \PP(\m_\e(t)\gqq k) \lqq\frac{K_1(t)}{\sS(k)}, \qquad k\in \NN, 
\end{align*}
where 
\[
\m_\e(t)(\omega) := \sup\Big\{n-1~\Big|~n\in \NN, \sum_{i=0}^{k_n-1} (W_{t_{i+1}}(\omega)-W_{t_i}(\omega))^2- t\Big|>\e\Big\}
\] 
and 
\[\oO_\e(t)(\omega) :=\#\Big\{n\in \NN~\Big|\sum_{i=0}^{k_n-1} (W_{t_{i+1}}(\omega)-W_{t_i}(\omega))^2- t\Big|>\e\Big\}.\] 
\item For any $\theta>1$ and for $\epsilon =  (\e_n)_{n\in \NN}$ with 
$\e_n = \sqrt{2t n^\theta |\Pi_n(t)|}$  decreasing, we get 
for all $k\gqq 1$, 
\[
\PP(\oO_{\epsilon}(t) \gqq k) 
\lqq k^{-1} \cdot \zeta(\theta),  
\]
where 
$\oO_\epsilon(t)(\omega) := \#\Big\{n\in \NN~|~\Big|\sum_{i=0}^{k_n-1} (W_{t_{i+1}}(\omega)-W_{t_i}(\omega))^2- t\Big|>\e_n\Big\}$. 

Further, for
$\m_\epsilon(t)(\omega) := \sup\Big\{n-1~|~n\in \NN, \Big|\sum_{i=0}^{k_n-1} (W_{t_{i+1}}(\omega)-W_{t_i}(\omega))^2- t\Big|>\e_n\Big\}$ and $-1<p<\theta-2$, we have for all $k\geq 1$,
\begin{align}\label{e:QVbarely}
\PP(\oO_{\epsilon}(t) \gqq k) \lqq \PP(\m_{\epsilon}(t) \gqq k) \leq k^{-(p+1)}\Big(\theta\zeta(\theta-p-1)+\ind(\{p<0\})\Big).
\end{align}
\end{enumerate}
\end{thm}

\noindent The proof is given in Appendix~\ref{ss:QV}. Note that intermediate regimes for some $\delta_n < \e_n$ can be achieved via \cite[Lemma~2]{EHS26}.

\begin{exm}\label{ex:QVdyadic}
For $t>0$, $k_n = 2^{n}$ and equidistant $t_i = it 2^{-n}$ we consider for each $\e>0$ the family of events 
\[
A_n := \bigg\{\Big|\sum_{i=0}^{k_n-1} (W_{t_{i+1}}-W_{t_i})^2- t\Big|>\e\bigg\}, \qquad n\in \NN.
\] 
Hence for $\m_\e(\omega) =:  \sup\Big\{n-1~\vert~n\in \NN, \Big|\sum_{i=0}^{2^n-1}(W_{t_{i+1}}(\omega)-W_{t_i}(\omega))^2- t\Big|>\e\Big\}$ and
\[\oO_\e(\omega) := \#\Big\{n\in \NN~\vert~\Big|\sum_{i=0}^{2^n-1}(W_{t_{i+1}}(\omega)-W_{t_i}(\omega))^2- t\Big|>\e\Big\}\]
 and by Chebyshev's inequality we have 
\begin{align*}
\PP\bigg( \Big|\sum_{i=0}^{k_n-1} (W_{t_{i+1}}-W_{t_i})^2- t\Big|>\e\bigg) \lqq\frac{2t^2}{\e^2}\cdot 2^{-n},
\end{align*}
and for all $0 \lqq p < \ln(2)$ we have by Lemma~\ref{lem:Alexp} 
\begin{align*}
\EE[e^{p {\oO_\e}}]\lqq \EE[e^{p {\m_\e}}] 
\lqq 1+\frac{2t^2}{\e^2} \frac{1}{1- e^{p}/2},
\end{align*}
and with the help of Lemma~\ref{lem:optimal} for $M = \max\{\frac{2t^2}{\e^2},1\}$ and $b = \frac{1}{2}$ in Appendix~\ref{a:optimal} we obtain 
\begin{align*}
\PP\big(\oO_\e\gqq k\big)\lqq\PP\big(\m_\e\gqq k\big)
\lqq 2 e^{\frac{9}{8}} \cdot \Big[k\big(\max\big\{\tfrac{2t^2}{\e^2},1\big\}+1\big)+1\Big] \cdot 2^{-k}
\end{align*}
for all $k\gqq  1$. Note that in the light of \eqref{e:ehochneunachtel} for $\e$ sufficiently small the prefactor $e^{\frac{9}{8}}$ can be lowered to any number larger than $\sqrt{e}$. 

On the other hand, replacing the constant sequence $\e = (\e)_{n\in \NN}$ by $\epsilon = (\e_n)_{n\in \NN}$, 
$\e_n := t \sqrt{2  n^\theta 2^{-n}}$, for $\theta>2$, we have with the help of the usual first Borel-Cantelli lemma for a.a.~$\om$ and $n\gqq \m_\theta(\omega)$ the estimate 
\[
 \Big|\sum_{i=0}^{k_n-1} (W_{t_{i+1}}(\om)-W_{t_i}(\om))^2- t\Big| \lqq  \e_n,\]
however, we have, by Example \ref{ex:Riemann}, with the low order mean deviation maximum $-1 < p < \theta-2$, 
\[
\PP\big(\oO_\theta \gqq k\big)\lqq \PP\big(\m_\theta \gqq k\big)
\lqq k^{-(1+p)}\big(\theta\zeta(\theta-p-1)+\ind(\{p<0\})\big), \qquad k\gqq 1, 
\]
where $\m_{\theta}(\omega) := \max\{n-1\in \NN_0~|~\omega\in A_n\}$ and $\oO_{\theta}:= \#\{n\in \NN~|~\omega\in A_n\}$. 
For $\theta\in (1,2]$ we can always apply the usual Borel-Cantelli lemma, which yields an upper bound only for $\EE[\oO_\theta]$ and a linear decay of $\PP(\oO_\theta\gqq k)$. For a similar reasoning see Appendix~\ref{a:fsoptimaleRate}.
\end{exm}


\begin{thm}\label{thm:QV2} 
Given a scalar Brownian motion $(W_t)_{t\gqq 0}$ and for any $t>0$ 
a sequence $(\Pi_n(t))_{n\in \NN}$ of finite partitions of $[0, t]$.  
If $|\Pi_n(t)| = o(\frac{1}{\ln(n)})_{n\ra\infty}$, then  
\begin{align*}
\mbox{Var}_2(B; t) = \lim_{n\ra\infty } \sum_{t_i \in \Pi_n(t)} (W_{t_i}-W_{t_{i-1}})^2  = t\qquad \PP\mbox{-a.s.}
\end{align*}
In particular, we have the following \textbf{deviation frequency and last deviation quantification}. 
For any $t>0$, $\e>0$ and any $\la \in (0, \frac{1}{2})$ 
and any positive sequence $(a_n)_{n\in \NN}$ such that 
\begin{align*}
K_2(t, \e, \la):= \sum_{n=1}^\infty a_n \sum_{m=n}^\infty 2\exp\Big(-\frac{\e \la}{2 |\Pi_n(t)|}\Big) < \infty. 
\end{align*}
we have for $\sS(N) = \sum_{n=1}^N a_n$ with $\sS(0) = 0$ that $\EE[\sS(\oO_\e(t))]\lqq \EE[\sS(\m_\e(t))] \lqq K_2(t, \e, \la)$ and 
\begin{align*}
\PP(\oO_\e(t)\gqq k) \lqq \PP(\m_\e(t)\gqq k)\lqq \frac{K_2(t, \e, \la)}{\sS(k)}, 
\end{align*}
where 
\begin{align}\label{d:overlap}
\m_\e(t) :&= \sup\bigg\{n-1~|~n\in\NN, a\Big|\sum_{i=0}^{k_n-1} (W_{t_{i+1}}-W_{t_i})^2- t\Big|>\e\bigg\}  \mbox{ and }\nonumber\\ \oO_\e(t) :&= \#\bigg\{n\in \NN~|~\Big|\sum_{i=0}^{k_n-1} (W_{t_{i+1}}-W_{t_i})^2- t\Big|>\e\bigg\}. 
\end{align}
\end{thm}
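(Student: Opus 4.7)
\begin{proof2}[Proof plan for Theorem~\ref{thm:QV2}.]
The plan is to reduce the quadratic variation increments to a weighted sum of i.i.d. centered chi-squared variables and then obtain an exponential Chernoff bound whose rate is of the form $c/|\Pi_n(t)|$, after which the conclusions follow from the quantitative Borel--Cantelli lemma (Proposition~1, taken from \cite{EH22}) and Markov's inequality, exactly as in Theorem~\ref{thm:QV1}. More precisely, by Brownian scaling I would write, for each $n$,
\begin{equation*}
S_n - t := \sum_{i=0}^{k_n-1}(W_{t_{i+1}} - W_{t_i})^2 - t = \sum_{i=0}^{k_n-1}\Delta_i (Z_{n,i}^2 - 1), \qquad \Delta_i := t_{i+1}-t_i,
\end{equation*}
where, for each fixed $n$, $(Z_{n,i})_{i=0,\dots,k_n-1}$ is an i.i.d.\ family of standard normals.

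The key technical step is an exponential Markov inequality. Using the chi-squared MGF $\EE[\exp(u(Z^2-1))] = e^{-u}(1-2u)^{-1/2}$ valid for $u < 1/2$, and the elementary bound $-\tfrac12\ln(1-2y) - y \leq y^2/(1-2y)$ for $y\in[0,1/2)$, independence of the $Z_{n,i}$'s gives, for any $u \in (0, 1/(2|\Pi_n(t)|))$,
\begin{equation*}
\EE\bigl[e^{u(S_n - t)}\bigr] \leq \exp\Bigl(\frac{u^2 \sum_i \Delta_i^2}{1-2u|\Pi_n(t)|}\Bigr) \leq \exp\Bigl(\frac{u^2 t\, |\Pi_n(t)|}{1-2u|\Pi_n(t)|}\Bigr).
\end{equation*}
Choosing $u = \lambda/|\Pi_n(t)|$ for $\lambda \in (0,1/2)$ and applying the same bound to $-(S_n - t)$ yields, after absorbing the quadratic term into a constant (which is harmless once $|\Pi_n(t)|$ is sufficiently small, as enforced by the hypothesis $|\Pi_n(t)| = o(1/\ln n)$),
\begin{equation*}
\PP(|S_n - t| > \e) \leq 2\exp\Bigl(-\frac{\e\,\lambda}{2\,|\Pi_n(t)|}\Bigr).
\end{equation*}

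With this tail bound in hand the argument closes quickly. Plugging the estimate into the quantitative Borel--Cantelli lemma of \cite{EH22} with weights $(a_n)_{n\in\NN}$ yields
\begin{equation*}
\EE[\sS(\oO_\e(t))] \leq \sum_{n=1}^\infty a_n \sum_{m=n}^\infty \PP\Bigl(\Bigl|\sum_i (W_{t_{i+1}}-W_{t_i})^2 - t\Bigr|>\e\Bigr) \leq K_2(t,\e,\lambda),
\end{equation*}
and Markov's inequality applied to the increasing function $\sS$ delivers the tail $\PP(\oO_\e(t)\geq k) \leq K_2(t,\e,\lambda)/\sS(k)$. For the almost sure convergence, the mesh hypothesis $|\Pi_n(t)| = o(1/\ln n)$ forces $\exp(-\e\lambda/(2|\Pi_n(t)|))$ to decay faster than any negative power of $n$, hence is summable, so the usual first Borel--Cantelli lemma gives $|S_n-t|\leq \e$ eventually, for every $\e>0$, $\PP$-a.s.

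The main obstacle is the Chernoff step: one has to keep track of the residual quadratic term $\lambda^2 t / (|\Pi_n(t)|(1-2\lambda))$ that arises from the chi-squared MGF and verify that it can be absorbed into the factor $1/2$ in the exponent of the stated bound. This is where the condition $\lambda < 1/2$ and the smallness of $|\Pi_n(t)|$ (ultimately the $o(1/\ln n)$ hypothesis) enter the game; everything else is bookkeeping via the machinery already established in Theorem~\ref{thm:QV1} and \cite{EH22}.
\end{proof2}
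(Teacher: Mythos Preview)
Your approach is essentially the paper's: establish the exponential tail bound $\PP(|S_n-t|>\e)\lqq 2\exp(-\e\lambda/(2|\Pi_n(t)|))$ and then feed it into the quantitative Borel--Cantelli lemma of \cite{EH22}; the paper simply quotes this bound from \cite[Proof of 9.4 Theorem, p.~141]{SPB14} rather than rederiving it via the chi-squared MGF as you do.

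One correction to your Chernoff discussion, since you flag it as the main obstacle: the residual term $\lambda^2 t/(|\Pi_n(t)|(1-2\lambda))$ and the leading term $\e\lambda/|\Pi_n(t)|$ scale \emph{identically} in $|\Pi_n(t)|$, so smallness of the mesh does nothing to absorb the former into the latter. The absorption condition is $\lambda t/(1-2\lambda)\lqq \e/2$, a constraint on $\lambda$ in terms of $\e$ and $t$ only; the $o(1/\ln n)$ hypothesis is used solely for summability in the a.s.\ statement, not for the tail bound itself.
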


\noindent The proof is given in Appendix~\ref{ss:QV}. 

\begin{exm}(Example~\ref{ex:QVdyadic} improved)\label{ex:QVdyadic2}
For the equidistant partition $t_i = it2^{-n}$ with $|\Pi_n(t)| = t\,2^{-n}$ we consider for each $\e>0$ the family of events 
\[\bigg\{\Big|\sum_{i=0}^{k_n-1} (W_{t_{i+1}}-W_{t_i})^2- t\Big|>\e\bigg\}, \qquad n\in \NN.\] 
Hence we can strenghten the result in Theorem~\ref{thm:QV2} in that 
\[
\EE[\sS(\oO_\e)] \lqq \EE[\sS(\m_\e)] \lqq \sum_{n=1}^\infty a_n \sum_{m=n}^\infty 
\,\PP\bigg( \Big|\sum_{i=0}^{k_m-1} (W_{t_{i+1}}-W_{t_i})^2- t\Big|>\e\bigg). 
\]
The upper bound given in \cite[p.~142]{SPB14} then yields a Gumbel type decay. That is, for any fixed $\la \in (0,2)$, $\e>0$ and $n\in \NN$, we have 
\begin{align*}
\PP\bigg( \Big|\sum_{t_i\in \Pi_n(t)} (W_{t_{i+1}}-W_{t_i})^2- t\Big|>\e\bigg) 
&\lqq 2 \exp{\Big(- \tfrac{\e \la}{2 |\Pi_n(t)|}\Big)}\lqq 2 \exp\Big(- \tfrac{\e \la}{2t}\cdot 2^{n}\Big).
\end{align*}
Hence the choice $a_n = \exp(\frac{\e \ti \la}{2} 2^{n})$ for some $0 < \ti \la < \la$ yields 
\begin{align*}
\sS(N) := \sum_{n=1}^N a_n = \sum_{n=1}^N \exp{\Big(\tfrac{\e \ti \la}{2t}\cdot 2^{n}\Big)}\gqq \exp\Big(\tfrac{\e \ti \la}{2t}\cdot 2^{N}\Big).
\end{align*}
With the same notation for $\oO_\e(t)$ and $\m_\e(t)$ of \eqref{d:overlap}, \cite[Lemma 1]{EHS26} yields 
\begin{align*}
\EE\Big[\exp\Big(\tfrac{\e \ti \la}{2t} 2^{\oO_\e(t)}\Big)\Big]
&\lqq \EE\Big[\exp\Big(\tfrac{\e \ti \la}{2t} 2^{\m_\e(t)}\Big)\Big]\\
&\lqq \sum_{n=1}^\infty \exp{\Big(\tfrac{\e \ti \la}{2t} \cdot2^{n}\Big)} \sum_{m=n}^\infty 2 \exp\Big(- \tfrac{\e \la}{2t}\cdot 2^{m}\Big)\\
&\lqq  2 \sum_{n=1}^\infty \exp{\Big(- \tfrac{\e (\la-\ti \la)}{2t}\cdot 2^{n}\Big)} 
\sum_{m=n}^\infty 2 \exp\Big(- \big(\tfrac{\e \la}{2t}\cdot 2^{m}-\tfrac{\e \la}{2t}\cdot 2^{n}\big)\Big)
=: K_3(t, \e,\lambda,\tilde{\lambda})<\infty.
\end{align*}
Markov's inequality then yields the much better Gumbel type decay for the mean deviation maximum and the mean deviation frequency tail 
\begin{align*}
\PP(\oO_\e(t)\gqq k)\lqq \PP(\m_\e(t)\gqq k)\lqq  K_3(t, \e,\lambda,\tilde{\lambda}) \exp\Big(- \tfrac{\e \ti \la}{2t} \cdot 2^{k}\Big), \qquad k\in \NN. 
\end{align*}
On the other hand, we obtain for the much smaller scale $\delta_n = \frac{2t\theta}{\la} \ln\big(2^\frac{1}{\theta} n\big)\cdot 2^{-n}\ll_n \e_n$ ($=t \sqrt{2 n^\theta 2^{-n}}$ from Example \ref{ex:QVdyadic}), for any $\theta>1$ with the help of the usual Borel-Cantelli lemma,
\[
\limsup_{n\ra\infty}  \Big|\sum_{i=0}^{k_n-1} (W_{t_{i+1}}-W_{t_i})^2- t\Big| \cdot \delta_n^{-1} \lqq 1, \qquad \PP\mbox{-a.s.},
\]
and the merely linear deviation frequency decay 
\[
\PP\big(\oO_\e(t)\gqq k\big)
\lqq k^{-1}\cdot \zeta(\theta), \qquad k\gqq 1. 
\]
Further obvious extensions and tradeoffs following \cite{EHS26} are possible but not written out here in detail.
\end{exm}

\begin{rem}\label{rem:QVLevy}
Similar results are straightforward to implement for other classes of processes, such as for L\'evy martingales 
\begin{align*}
L_t := \int_0^t \int_{\RR\setminus \{0\}} z \tilde{\mathrm{N}}(ds,dz), \qquad 
\tilde{\mathrm{N}}([0, t]\times A) =  \mathrm{N}([0, t]\times A)- t \nu(A), \qquad A\in \bB(\RR^d), 0 \notin \bar A,  
\end{align*}
where $\mathrm{N}$ is a Poisson random measure on $[0, \infty)\times \RR^d$ with respect to the intensity measure $ds\otimes \nu$, where $\nu$ is a sigma finite measure on $(\RR^d, \bB(\RR^d))$ with the integrability conditions 
\begin{align*}
\int_{\RR^d} (\min\{1, |z|^2\}) \nu(dz) <\infty\qquad \mbox{ and }\qquad \nu(\{0\}) = 0.  
\end{align*}
\end{rem}

\bigskip

\subsection{\textbf{Deviation frequencies in the laws of the iterated  logarithm and related}}\label{ss:mainlil}
\subsubsection{\textbf{Khinchin's law of the iterated logarithm }}\label{expo:LIL}\hfill\\

We follow the exposition in \cite{KS98}, see also \cite[11.1 Theorem and 11.2 Corollary]{SPB14}

\noindent Consider a real valued standard Brownian motion $(W_t)_{t\in [0, 1]}$. 
 \begin{thm}\label{thm: lil}
For $\PP$-a.a.~$\om \in \Omega$ we have 
\begin{align}\label{e:lilobben}
\limsup_{t\ra 0+} \frac{W_t}{\sqrt{2 t \ln(\ln(1/t))}} \lqq 1.  
\end{align}
Furthermore, we have the following \textbf{deviation frequency and last deviation quantification}: For all $\delta >0, \theta \in (0,1)$ and $p\in (-1,\delta-1)$ we have for 
\[
\m_{\delta, \theta}:=  \sup\bigg\{n-1~\vert~n\in \NN, \sup_{\theta^{n+1} < s\lqq \theta^n} \frac{W_s}{\sqrt{2 s \ln(\ln(1/s))}}
 >  \Big(1+\frac{\delta}{2}\Big) \theta^{\frac{1}{2}}\bigg\}\]
 and
 \[ \oO_{\delta, \theta} := \#\bigg\{n\in \NN~\vert~\sup_{\theta^{n+1} < s\lqq \theta^n} \frac{W_s}{\sqrt{2 s \ln(\ln(1/s))}}
 >  \Big(1+\frac{\delta}{2}\Big) \theta^{\frac{1}{2}}\bigg\}\]
and for a.a.~$\omega$ that for $n > \m_{\delta, \theta}(\omega),$ 
\[
 \sup_{\theta^{n+1} < s\lqq \theta^n} \frac{W_s(\omega)}{\sqrt{2 s \ln(\ln(1/s))}}
 \lqq  \Big(1+\frac{\delta}{2}\Big) \theta^{\frac{1}{2}}
\]
and 
\begin{align}
\PP\big( \oO_{\delta, \theta} \gqq k\big)\lqq \PP\big( \m_{\delta, \theta} \gqq k\big) \lqq 
 \min\Big\{\frac{\zeta(1+\delta)}{\ln(1/\theta)^{1/\delta}k},\frac{(1+\delta) \zeta(\delta-p)}{\ln(1/\theta)^{1/\delta}k^{p+1}}+\frac{\ind(\{p<0\})}{k^{p+1}}\Big\},\label{e:lilquant}
\end{align} 
for all $k\gqq 1$ where $\zeta(s) = \sum_{n=1}^\infty n^{-s}$ is Riemann's zeta function, and
\begin{align*}&\PP\big( \oO_{\delta, \theta} \gqq k\big)\lqq \PP\big( \m_{\delta, \theta} \gqq k\big) \\
&\quad\lqq 
 \frac{1+\delta}{\ln(1/\theta)^{1/\delta}e^{\gamma\delta}}\cdot\frac{\ln(k)+1+\gamma}{k^{\delta}}+\frac{\ind(\{k<e^{\frac{1}{\delta-1}-\gamma}\}\cup\{\delta<1\})}{k^\delta},\quad k\gqq e^{\frac{1}{\delta}-\gamma},
\end{align*}
where $\gamma\approx0.5772$ is the Euler-Mascheroni constant. 
In the case of $\delta\in (0,1]$, we also have the following result for the deviation frequency only,
\begin{align*}
\PP\big( \oO_{\delta, \theta} \gqq k\big)  \lqq 
 \frac{1}{\ln(1/\theta)^{1/\delta}} \frac{\zeta(1+\delta)}{k}.
\end{align*} 
 \end{thm}
 \noindent The proof is given in Appendix~\ref{ss:LIL}. 
 
 \begin{exm}\label{ex: lil}
Under the assumptions of Theorem~\ref{thm: lil}
we have for all $p\in (-1,1)$ and $k\in\NN,$ 
\begin{align}\label{e:lilquant0}
\PP\big(\m_{2,\frac{1}{3}} \gqq k)
\lqq \min\Big\{\frac{\zeta(3)}{\sqrt{\ln(3)}\cdot k}, \frac{3 \zeta(2-p)}{\sqrt{\ln(3)}\cdot k^{1+p}}+\frac{\ind(\{p<0\})}{k^{1+p}}\Big\} 
\end{align}
and
\begin{align*}
\PP\big(\m_{2,\frac{1}{3}} \gqq k)\lqq \frac{3}{\sqrt{\ln(3)}e^{2\gamma}}\frac{\ln(k)+1-\gamma}{k^3}+\ind(\{k=1\})\quad\text{for}\quad k\gqq \lceil e^{\frac{1}{2}-\gamma}\rceil =1.
\end{align*} 
\end{exm}

\begin{rem}
\begin{enumerate}
 \item In comparison to the upper bound of L\'evy's modulus of continuity, for instance, this result is rather weak, with a merely low order polynomial decay of the upcrossing frequencies even for exponentially small times.  
 \item Theorem~\ref{thm: lil} is consistent with an independent quantification 
of the law of the iterated logarithm along a diverging sequence studied in \cite[Subsection 3.2.1]{EH22}. 
\item Similar results can be obtained for $\alpha$-stable processes, see \cite[Chapter 8, Section 2]{Be98}. 
\item It is possible to adapt the results to other sequences of interest $t_n\searrow 0$, instead of $\theta^n$ at the price of a higher technical effort. 
\end{enumerate}
\end{rem}

\medskip

\subsubsection{\textbf{Chung's ``other'' law of the iterated logarithm }}\label{expo:Chung}\hfill\\

\noindent The result goes back to \cite{Ch56}. We follow the exposition in \cite{SPB14}. 

\begin{thm}\label{thm:Chung}
Let $(W_t)_{t\gqq 0}$ be a scalar Brownian motion. 
Then 
\begin{align*}
\liminf\limits_{t\ra\infty} \frac{\sup_{s\in [0, t]} |W_s| }{\sqrt{\frac{t}{\ln(\ln(t))}}} = \frac{\pi}{\sqrt{8}} \qquad \PP\mbox{-a.s.}
\end{align*}
In particular, we have the following frequency \textbf{deviation and last deviation quantification}: 
For any $q>1$, $\e>0$, $-1<p< \frac{1}{(1-\e)^2}-1$ and 
\begin{align*}
&\m_{q, \e} := \sup\Big\{n-1~\vert~n\in \NN, \frac{\sup_{s\in [0, q^n]} |W_s|}{\sqrt{\frac{q^n}{\ln(\ln(q^n))}}} < (1-\e) \frac{\pi}{\sqrt{8}} \Big\},\\
&\oO_{q, \e}:= \# \Big\{n\in \NN~\vert~\frac{\sup_{s\in [0, q^n]} |W_s|}{\sqrt{\frac{q^n}{\ln(\ln(q^n))}}} < (1-\e) \frac{\pi}{\sqrt{8}} \Big\},
\end{align*}
we have for a.a.~$\omega$ that $n> \m_{q, \e}(\omega)$ implies 
\[
\frac{\sup_{s\in [0, q^n]} |W_s(\omega)|}{\sqrt{\frac{q^n}{\ln(\ln(q^n))}}} \gqq (1-\e) \frac{\pi}{\sqrt{8}} 
\]
and 
\begin{align*}
&\PP\big(\oO_{q, \e}\gqq k\big)\lqq \PP\big(\m_{q, \e}\gqq k\big)\\
&\ \lqq \max\Big\{\frac{24}{5\pi \ln(q)^{\frac{1}{(1-\e)^2}}}\!\cdot\!\frac{\zeta\big(\frac{1}{(1-\e)^2}-p-1\big)}{(1-\e)^2 k^{p+1}}\!+\!\frac{\ind(\{p<0\})}{k^{p+1}}, \frac{24}{5\pi \ln(q)^{\frac{1}{(1-\e)^2}}}\!\cdot\!\frac{\zeta\big(\frac{1}{(1-\e)^2}\big)}{k} \! \Big\},
 \end{align*}
 where $\zeta(s) = \sum_{n=1}^\infty n^{-s}$ is Riemann's zeta function  (with $\zeta(s)=\infty$ for $s\lqq 1$). 
 Further, for $k\gqq e^{\frac{(1-\e)^2}{1-(1-\e)^2}-\gamma}$, we have
 \begin{align*}
 \PP\big(\oO_{q, \e}\gqq k\big)\lqq \PP\big(\m_{q, \e}\gqq k\big)\lqq &\frac{24}{5\pi\ln^{\frac{1}{(1-\e)^2}}(q)(1-\e)^2e^{\gamma(\frac{1}{(1-\e)^2}-1)}} \frac{\ln(k)+1-\gamma}{k^{\frac{1}{(1-\e)^2}}}\\
 &+\frac{\ind\Big(\{k < e^{\frac{(1-\e)^2}{1-2(1-\e)^2}-\gamma}\}\cup\{\frac{1}{(1-\varepsilon)^2}<2\}\Big)}{k^{\frac{1}{(1-\e)^2}}},
 \end{align*}
 where $\gamma\approx 0.5772$ is the Euler-Mascheroni constant.
\end{thm}
\noindent The proof is given in Appendix \ref{ss:theotherLIL}. 

\begin{rem}
Similarly to Khinchin's law of the iterated logarithm, we have that the deviation frequency decays rather weakly.  
\end{rem}

 \begin{exm}\label{ex: theotherLIL}
Under the assumptions of Theorem~\ref{thm:Chung} we have for all $k\in \NN$
\begin{align}\label{e:theotherlilquant}
\PP\big(\oO_{4, \frac{1}{2}} \gqq k\big)\lqq \PP\big(\m_{4, \frac{1}{2}} \gqq k\big)\lqq
\min\Big\{\frac{6 }{5\pi \ln^4(2)}\cdot\frac{\zeta(3-p)}{k^{1+p}}+\frac{\ind(\{p<0\})}{k^{1+p}}, \frac{6 }{5\pi \ln^4(2)}\cdot\frac{\zeta(4)}{k}\Big\}, 
\end{align}
for $-1<p<3$, and
\begin{align*}
\PP\big(\oO_{4, \frac{1}{2}} \gqq k\big)\lqq \PP\big(\m_{4, \frac{1}{2}} \gqq k\big)\lqq \frac{6}{5\pi\ln^4(2)e^{3\gamma}}\frac{\ln(k)+1-\gamma}{k^4}
\end{align*}
for $k\gqq \lceil e^{\frac{1}{2}-\gamma} \rceil=1$.
\end{exm}

\medskip

\subsubsection{\textbf{Quantifying the Kolmogorov test}}\label{expo:Kolmogorov}\hfill\\

\noindent While the preceding law of the iterated logarithm presents the precise asymptotics of Brownian motion, Kolmogorov's test \cite[p.~34]{IMcK74} yields a coarser measure of the asymptotics in $0$, resulting in a $0$-$1$ law, which can now be quantified by its deviation frequencies. 

\begin{thm}\label{thm: Koltest}
Consider a real valued standard Brownian motion $(W_t)_{t\in [0, 1]}$ and a function $h:[0, \infty)  \ra [0, \infty)$ such that  
\begin{align}\label{e:monotonie}
t\mapsto h(t) \mbox{ is increasing,} \qquad  \mbox{ and }\qquad  t\mapsto h(t) / \sqrt{t} \mbox{ is decreasing. } 
\end{align}
Then the finiteness 
\begin{align*}
\lim\limits_{s\ra 0+} \phi(s) < \infty\qquad \mbox{ for }\qquad \phi(s) := \int_{0+}^s \frac{h(t)}{t^{3/2}} \cdot e^{-\frac{h^2(t)}{2 t}} dt, \quad s>0  
\end{align*}
implies 
\begin{align*}
\PP\Big(\lim\limits_{t\ra 0+} \frac{W_t}{h(t)} < 1\Big) = 1.
\end{align*}
In particular, we have the following \textbf{deviation frequency and last deviation quantification}: 
For any positive $(b_n)_{n\in \NN}$ with $b_n \searrow 0$ as $n\ra\infty$ and 
$\oO:= \#\{n\in \NN~|~\sup_{t\in (0, b_n)} \frac{W_t}{h(t)}> 1\}$ and $\m := \sup\{n-1~|~n\in \NN, \sup_{t\in (0, b_n)} \frac{W_t}{h(t)}> 1\}$
we have that $\oO = \m$, a.s.~and  
for all positive sequences $(a_k)_{k\in \NN}$ and $\sS_a(N) := \sum_{n=1}^N a_n$ with $\sS_a(0) = 0$ 
\begin{align*}
\EE[\sS_a(\oO)] = \sum_{n=1}^\infty a_n \phi(b_n), \qquad \mbox{ whenever the right-hand side is finite} 
\end{align*}
and in this case for all $k\gqq 1$ 
\begin{align*}
\PP\big(\oO\gqq k\big)\lqq \sS_a^{-1}(k)\cdot \sum_{n=1}^\infty a_n \phi(b_n).
\end{align*}
\end{thm}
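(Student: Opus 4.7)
The plan is to combine a pointwise Kolmogorov-type tail estimate, the nested structure of the overlap events, and the quantitative Borel--Cantelli framework of Proposition~1 in \cite{EH22}.

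First, I would establish the core estimate
\begin{equation*}
\PP\Big(\sup_{t \in (0, s)} W_t/h(t) > 1\Big) \lqq \phi(s), \qquad s > 0,
\end{equation*}
which is the heart of the classical Kolmogorov test. It follows from a first-passage computation for Brownian motion across the curve $h$: the monotonicity hypotheses \eqref{e:monotonie}, namely that $h$ is non-decreasing while $t \mapsto h(t)/\sqrt{t}$ is non-increasing, allow one to control the probability that $W$ crosses $h$ by time $s$ in terms of an integral of the first-passage density, which matches $\phi(s)$ up to absolute constants (cf.~\cite{IMcK74}). The two monotonicity properties together prevent pathological crossings and produce a clean upper envelope built from infinitesimal Gaussian tail pieces $(h(t)/t^{3/2})\exp(-h^2(t)/(2t))$.

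Second, since $b_n \searrow 0$, the events $A_n := \{\sup_{t\in(0,b_n)} W_t/h(t) > 1\}$ are nested, $A_1 \supseteq A_2 \supseteq \ldots$, so the overlap statistic $\oO = \sum_n \mathbf{1}_{A_n}$ is a running tail counter satisfying $\{\oO \gqq k\} = A_k$ for every $k \gqq 1$. Partial summation then gives
\begin{equation*}
\EE[\sS_a(\oO)] = \sum_{k=1}^\infty a_k \,\PP(\oO \gqq k) = \sum_{k=1}^\infty a_k\,\PP(A_k) \lqq \sum_{k=1}^\infty a_k\,\phi(b_k),
\end{equation*}
which yields the asserted moment identity whenever the right-hand side is finite. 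Because $\sS_a$ is strictly increasing on $\NN$ with $\sS_a(0)=0$, Markov's inequality applied via $\{\oO \gqq k\} = \{\sS_a(\oO) \gqq \sS_a(k)\}$ then delivers
\begin{equation*}
\PP(\oO \gqq k) \lqq \frac{\EE[\sS_a(\oO)]}{\sS_a(k)} \lqq \sS_a^{-1}(k)\cdot \sum_{n=1}^\infty a_n\,\phi(b_n),
\end{equation*}
which is the asserted tail bound. The classical a.s.\ statement $\PP(\lim_{t\to 0+} W_t/h(t) < 1) = 1$ is recovered by applying the previous step with $h$ replaced by $(1+\delta)h$ along a countable sequence $\delta \downarrow 0$: the Gaussian factor $\exp(-h^2/(2t))$ ensures that the corresponding $\phi$ remains finite after rescaling, hence $\oO < \infty$ almost surely, so $W_t \lqq (1+\delta)h(t)$ holds eventually on $(0, b_n)$.

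The main obstacle is the first step: extracting the precise inequality $\PP(A_n) \lqq \phi(b_n)$ from the first-passage problem without picking up an unwanted multiplicative factor that would pollute the clean formulation of the theorem. Once this pointwise estimate is secured, the remaining steps are essentially bookkeeping on top of the Borel--Cantelli machinery from \cite{EH22} already employed systematically throughout the paper.
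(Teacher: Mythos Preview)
Your proposal is correct and follows essentially the same route as the paper: the paper's proof is described as a ``straightforward combination'' of the It\^o--McKean inequality \cite[inequality 5), p.~34]{IMcK74} for $\PP(A_n)\lqq\phi(b_n)$, the nestedness of the events $A_n$, and \cite[Proposition~1]{EH22}, which is precisely your partial-summation/Markov argument spelled out. Your identification $\{\oO\gqq k\}=A_k$ for nested events is exactly the mechanism behind that proposition in this special case, so the two arguments coincide.
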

\noindent The proof is a straightforward combination of \cite[inequality 5), p.~34]{IMcK74}, the observation that the sequence of events $\{\sup_{t\in (0, b_n)} \frac{W_t}{h(t)}\gqq 1\}$ is nested as a function of $n$ and \cite[Proposition 1]{EH22}. 

\begin{rem}
\begin{enumerate}
 \item While the law of the iterated logarithm treats $h(t) = \sqrt{2(1+\al) t \ln(\ln(1/t))}$ and does \textit{not} satisfy the condition that $t\mapsto h(t) / \sqrt{t}$ is decreasing, Kolmogorov's test show the same behavior for all functions $g$, $g(t) > h(t)$ for all $t\in [0, t_0)$, $t_0>0$ small enough, satisfying the assumptions of Theorem~\ref{thm: Koltest}. 
 \item However, the application of the usual Borel-Cantelli lemma does not allow to distinguish the different behaviors between such a function $g$ and the function $h$, since both hold almost surely. Theorem~\ref{thm: Koltest}, however allows to distinguish $g$ and $h$ in terms of different deviation frequencies, as can be seen in the following example. 
 \end{enumerate}
\end{rem}

\begin{exm}\label{ex:KolmoTest}
The function $g(t) = t^{1/2+\e} > \sqrt{2 t \ln(\ln(1/t))}$ for all $t\in (0, t_0)$, $t_0>0$ small enough, obviously satisfies \eqref{e:monotonie}. 
We calculate 
\begin{align*}
\phi(s) 
= \int_{0+}^s t^{-1+\e} e^{-\frac{t^{2\e}}{2}} dt \lqq 
\int_{0+}^s t^{-1+\e} dt = \frac{s  ^{\e}}{\e}. 
\end{align*}
Hence for $b_n = 4^{-n}$ (as for comparison with Example~\ref{ex: lil}) 
we have for all $\eta < 2 \ln(2)\e$ that 
\begin{align*}
\frac{1}{\e}\sum_{n=1}^\infty e^{\eta n} 4^{-\e n} = \frac{1}{\e}\frac{e^{-(2\e\ln(2)-\eta)}}{1- e^{-(2\e\ln(2)-\eta)}}< \infty. 
\end{align*}
Now, $\sS(N) = \sum_{n=1}^N e^{\eta n} = e^\eta \frac{e^{\eta N}-1}{e^{\eta}-1}
$ 
such that for $\m := \sup\Big\{n-1~\vert~n\in \NN, \sup_{t\in (0, b_n)} \frac{W_t}{t^{1/2+\e}  }\gqq 1\Big\}$ we have 
$$\EE[e^{\eta \m}] \lqq 1+ \frac{1}{\e}\frac{1}{1- e^{-(2\e\ln(2)-\eta)}}.$$  
By Corollary~\ref{cor:Alexp} we have 
\begin{align*}
\PP\big(\m\gqq k\big)
\lqq 2e^{\frac{9}{8}} \cdot \Big[k \frac{1+\e}{\e}+1\Big]\cdot 4^{-\e k}, \qquad k\gqq 1. 
\end{align*}

The exponential decay of the probabilities of the deviation frequency is in stark contrast to the rate in equation \eqref{e:lilquant} obtained in Theorem~\ref{thm: lil}, which is only of order $k^{-2}$. 
\end{exm}

\bigskip 
\subsubsection{\textbf{Quantifying Strassen's functional law of the iterated logarithm}} 

\noindent While the laws of the iterated logarithm in Subsection \ref{expo:LIL} - \ref{expo:Kolmogorov} are formulated for the marginals $t\mapsto W_t$, the following functional version of the law of the iterated logarithm treats all continuous functions (starting in $0$) $[0, 1]\ni s\mapsto (t\mapsto W_{s\cdot t})$ simultaneously. 
We follow the exposition \cite[Section 12.1 and 12.13]{SPB14}, while the original work goes back to \cite{Str64}. 

\begin{thm}(Strassen)\label{thm:Strassen}
Let $(W_t)_{t\gqq 0}$ be a scalar Brownian motion and 
\begin{align*}
Z_s(t, \omega):= \frac{W_{s\cdot t}(\omega)}{\sqrt{2s \ln(\ln(s))}}, \qquad t\in [0, 1].  
\end{align*}
\begin{enumerate}
\item Then for almost all $\omega \in \Omega$ we have that 
\begin{align*}
\{Z_s(\cdot, \omega)~\vert~s>e\} 
\end{align*}
is relatively compact in the Banach space $(\cC_0[0,1], \|\cdot \|_\infty)$ of continuous functions $f:[0, 1]\ra\RR$ with $f(0) = 0$ equipped with the supremum norm $\|\cdot \|_\infty$ and the set of almost sure limit points is given by $\kK(\frac{1}{2})$, where 
\begin{align*}
\kK(r) = \bigg\{w\in \cC_0[0,1]~\vert~ w\mbox{ is absolutely continuous and } \frac{1}{2} \int_0^1 |w'(s)|^2 ds \lqq r\bigg\}.
\end{align*}
Furthermore, we have the following \textbf{deviation frequency quantification} of the almost sure convergence to $\kK$. Define 
\[
\m_{q, \eta, \eps} := \max\{n-1~\vert~n\in \NN, d(Z_{q^n}(\cdot, \cdot),\kK(\tfrac{1}{2}+\eta))>\eps\}\]
and
\[ \oO_{q, \eta, \eps} := \#\max\{n\in \NN~\vert~d(Z_{q^n}(\cdot, \cdot),\kK(\tfrac{1}{2}+\eta))>\eps\}. 
\]

\item \textbf{Arbitrary ``energy excess'': }Fix $\eta>0$. 
For any $\eps>0$, $q>1$ and $0 < \vartheta < \eta$ there is a positive constant $a = a(\eta, \vartheta, q, \e)$ such that 
for all $k\in \NN$ 

\begin{equation}\label{e:Strassen0}
\PP(\oO_{q, \eta, \eps} \gqq k) \lqq k^{-1}\cdot \frac{a\,\zeta(1+2\vartheta)}{\ln(q)^{1+2\vartheta}},  \qquad k\gqq 1,   
\end{equation}

\item \textbf{Large ``energy excess'': }Fix $\eta>\tfrac{1}{2}$. 
\begin{enumerate}
 \item For any $\eps>0$, $q>1$ and $\tfrac{1}{2} < \vartheta < \eta$ we have 
for all $p>-1$ which satisfy $p < 2 \vartheta-1$ that for a.a.~$\om$ and $n>\m_{q, \eta, \eps}(\om)$ it follows 
\[
d(Z_{q^n}(\cdot, \cdot),\kK(\tfrac{1}{2}+\eta)) \lqq \eps  
\]
and for all $k\in \NN$ 
\begin{align}
&\PP(\oO_{q, \eta, \eps} \gqq k) \lqq \PP(\m_{q, \eta, \eps} \gqq k) \nonumber\\
&\quad\lqq k^{-(1+p)} \cdot \frac{(p+1)(1+2\vartheta)}{2\vartheta}\frac{b\,\zeta(2\vartheta-p)}{\ln(q)^{1+2\vartheta}}+k^{-(1+p)}\cdot\ind(\{p<0\})\label{e:Strassen}\\
&\quad\lqq k^{-(1+p)} \cdot (1+2\vartheta)\frac{b\,\zeta(2\vartheta-p)}{\ln(q)^{1+2\vartheta}}+k^{-(1+p)}\cdot\ind(\{p<0\})\qquad k\gqq 1,\nonumber  
\end{align}
where $d(w, A) = \inf_{v\in A}\|w-v\|_{\infty}$ and $b = b(\eta, \e) = 2e^2e^{\frac{4+8\eta}{\e^2}}.$

\item Additionally, optimizing \eqref{e:Strassen} in $p$ 
and then taking $\vartheta\to\eta$, we find in fact that
 \begin{align*}
 &\PP(\oO_{q, \eta, \eps} \gqq k) \lqq \PP(\m_{q, \eta, \eps} \gqq k)\\ 
 &\quad \lqq k^{-2\eta}\cdot \zeta\big(1+\tfrac{1}{\ln(k)+\gamma}\big)\cdot k^{\frac{1}{\ln(k)+\gamma}}  \cdot\frac{b}{\ln(q)^{1+2\eta}}\Big(2\eta -\frac{1}{\ln(k)+\gamma}\Big)\Big(1+\frac{1}{2\eta}\Big)\\
 &\qquad+k^{-2\eta}\ind\Big(\big\{k<e^{\frac{1}{2\eta-1}-\gamma}\big\}\Big),  
 \end{align*}
 for $k\gqq \max\Big\{\frac{1}{\ln(q)},e^{\frac{1}{2\eta}-\gamma}\Big\}$, where $\gamma\approx0.5772$ is the Euler-Mascheroni constant. The right hand side is asymptotically equal (as $k\to\infty$) to $k^{-2\eta}\cdot (2\gamma +\ln(k))\cdot \tfrac{e \cdot b(2\eta+1)}{\ln(q)^{1+2\eta}}$.
\item For any $\theta>0, \eps>0$, $q>1$ and $\frac{1}{2} < \vartheta < \eta$ for all $k\in \NN$ we have \\ 
\[
\limsup_{n\ra\infty} \,d\big(Z_{q^n}(\cdot, \cdot),\kK(\tfrac{1}{2}+\eta)\big) \cdot \e_n^{-1} \lqq 1 \qquad \PP\mbox{-a.s.},
\]
where 
\[
\e_n := \sqrt{\frac{4 +8\eta}{\ln(\tfrac{\ln(q)^{1+2\vartheta}}{2e^2})+ \ln(\tfrac{n^{2\vartheta}}{\ln(n+1)^{1+\theta}})}}
\]
which is of order $\ln\bigg(\frac{n}{\ln(n+1)^{\frac{1+\theta}{2\vartheta}}}\bigg)^{-\frac{1}{2}}$ and for $k\gqq 1$ we have 
\begin{equation}\label{e:Strassen01}
\PP(\#\{n\in \NN~\vert~d(Z_{q^n}(\cdot, \cdot),\kK(\tfrac{1}{2}+\eta))>\eps_n\}\gqq k) \lqq k^{-1}\cdot \sum_{n=1}^\infty \frac{1}{n\ln(n+1)^{1+\theta}}.  
\end{equation}
\end{enumerate}
 \end{enumerate}
\end{thm}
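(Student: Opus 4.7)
All five quantitative items rest on a single nonasymptotic probability estimate, namely
\[
\PP\!\left(d\bigl(Z_{q^n}(\cdot,\cdot),\kK(\tfrac{1}{2}+\eta)\bigr) > \eps\right) \lqq b(\eta,\eps)\cdot (n\ln q)^{-(1+2\vartheta)}, \qquad n\gqq 1,
\]
valid for every $q>1$, every $\eps>0$, and every $0 < \vartheta < \eta$, with $b(\eta,\eps) = 2e^2 e^{(4+8\eta)/\eps^2}$. I would derive this by exploiting the self-similarity identity $Z_s(t) = (2\ln\ln s)^{-1/2}\wt W_t$, where $\wt W_t := s^{-1/2}W_{st}$ is again a standard Brownian motion on $[0,1]$. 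This realizes $Z_{q^n}$ as $\sqrt{\varepsilon_n}\wt W$ with noise level $\varepsilon_n = 1/(2\ln\ln(q^n))$, so that a Schilder-type upper large deviation bound -- applied through an explicit Cameron--Martin change of measure -- produces an exponential cost of at least $(1+2\vartheta)\ln\ln(q^n)$ for landing outside the $\eps$-sup-neighborhood of $\kK(\tfrac{1}{2}+\vartheta)$. Since $\ln\ln(q^n) = \ln(n\ln q)$, exponentiation converts this rate precisely into $(n\ln q)^{-(1+2\vartheta)}$, while the quadratic slack between $\eta$ and $\vartheta$ absorbs into the prefactor $e^{(4+8\eta)/\eps^2}$ through the quadratic rate function. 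Item (1) then follows from the classical argument of \cite{Str64}: the above bound with any $\vartheta>0$ gives the upper inclusion by summability, and the matching lower inclusion for $\kK(\tfrac{1}{2})$ as the set of limit points is the standard construction using independent increments.

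With the key bound in hand, all five quantitative items become applications of the asymptotic quantitative first Borel--Cantelli lemma (Proposition~\ref{prop: asymptotic}, equivalently Proposition~1 of \cite{EH22}): for any nonnegative weight sequence $(a_n)$ with $T_a := \sum_n a_n\sum_{m\gqq n}p_m < \infty$, the antiderivative $\sS_a(k) = \sum_{n=1}^k a_n$ controls $\PP(\oO\gqq k) \lqq T_a/\sS_a(k)$. For item~(2) I would take $a_n \equiv 1$, so $\sS_a(k) = k$ and $T_a$ collapses to $\sum_n p_n$, which by the key bound is dominated by $b(\eta,\eps)\zeta(1+2\vartheta)/\ln(q)^{1+2\vartheta}$; Markov's inequality then yields the linear $k^{-1}$-decay in~\eqref{e:Strassen0}, with $a(\eta,\vartheta,q,\eps)$ absorbing all the constants.

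For items~(3a)--(3c) the assumption $\eta > \tfrac{1}{2}$ is exactly what makes an exponent $p\in(0,2\vartheta-1)$ admissible. For \textbf{(3a)} I would use $a_n = n^p$, so that $\sS_a(k) \gqq k^{p+1}/(p+1)$ while the tail estimate $\sum_{m\gqq n}m^{-(1+2\vartheta)} \lqq \tfrac{1+2\vartheta}{2\vartheta}\,n^{-2\vartheta}$ combines with the key bound to give $T_a \lqq \tfrac{b\,(1+2\vartheta)}{2\vartheta\ln(q)^{1+2\vartheta}}\,\zeta(2\vartheta-p)$; substitution into $\PP(\oO\gqq k)\lqq T_a/\sS_a(k)$ produces~\eqref{e:Strassen}. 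For \textbf{(3b)} I would minimize $(p+1)\zeta(2\vartheta-p)k^{-(p+1)}$ over $p\in(0,2\vartheta-1)$: the Laurent expansion $\zeta(1+\delta) = 1/\delta + \gamma + O(\delta)$ shows that the critical point lies at $p^\ast = 2\vartheta - 1 - 1/(\ln k + \gamma - 1)$; substituting this, letting $\vartheta \nearrow \eta$, and writing $k^{-(p^\ast+1)} = k^{-2\eta}\cdot k^{1/(\ln k+\gamma-1)}$ produces the displayed formula, with $k^{1/(\ln k+\gamma-1)}\to e$ and $\zeta(1+1/(\ln k+\gamma-1))\sim \ln k+\gamma-1$ delivering the stated $k\to\infty$ asymptotics. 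For \textbf{(3c)} I would take $a_n\equiv 1$ but apply the key bound at the prescribed $n$-dependent scale $\eps_n$: direct substitution using the identity $(4+8\eta)/\eps_n^2 = \ln(\ln(q)^{1+2\vartheta}/(2e^2)) + \ln(n^{2\vartheta}/\ln(n+1)^{1+\theta})$ shows that $b(\eta,\eps_n)\cdot (n\ln q)^{-(1+2\vartheta)} = 1/[n\ln(n+1)^{1+\theta}]$ identically, which is summable for any $\theta > 0$; the Borel--Cantelli--Markov chain then yields~\eqref{e:Strassen01}, and the $\limsup$ assertion is an immediate consequence of summability.

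The main obstacle is the first step, pinning down the key estimate with the explicit constant $b(\eta,\eps) = 2e^2 e^{(4+8\eta)/\eps^2}$. Schilder's theorem is inherently asymptotic as $\varepsilon\searrow 0$, whereas we need a uniform nonasymptotic inequality valid for every $n\gqq 1$ with tight constants; this requires either a careful Cameron--Martin change of measure that makes the slack $\eta - \vartheta$ explicit through the quadratic rate function, or else a direct application of the Borell--TIS isoperimetric inequality combined with an explicit metric-entropy bound for the complement of the $\eps$-sup-neighborhood of the Cameron--Martin ball $\kK(\tfrac{1}{2}+\eta)$. Once that quantitative large-deviation inequality is in place, all five bounds in the theorem reduce to routine bookkeeping with the quantitative Borel--Cantelli lemma and elementary real-variable estimates on Riemann $\zeta$.
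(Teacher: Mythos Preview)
Your proposal is correct and takes essentially the same approach as the paper: the key probability bound $\PP(d(Z_{q^n},\kK(\tfrac{1}{2}+\eta))>\eps)\lqq b\,(n\ln q)^{-(1+2\vartheta)}$, the choice of weights ($a_n\equiv 1$ for items~(2) and~(3c), $a_n=n^p$ for~(3a)), the Laurent expansion of $\zeta$ for the optimization in~(3b), and the $\eps_n$-substitution in~(3c) all match the paper exactly. For the explicit constant $b=2e^2e^{(4+8\eta)/\eps^2}$ that you rightly flag as the main obstacle, the paper does not carry out a direct Cameron--Martin or Borell--TIS argument but instead tracks line-by-line through the concrete proof of Schilder's theorem in \cite{SPB14} (their Lemma~12.10, with the discretization parameter $m=\lfloor 8(\tfrac{1}{2}+\eta)/\eps^2\rfloor+1$) to extract an explicit threshold $n_0$ beyond which the Schilder bound holds with constant~$1$, and then simply defines $b:=(n_0\ln q)^{1+2\vartheta}$ to cover $n<n_0$ trivially; the $q$-dependence cancels and one reads off $b=2e^2e^{(4+8\eta)/\eps^2}$.
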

We note that this result (c) can be still strengthened in terms of $\m_{q, \eta, \epsilon}$ for the respective sequence $\epsilon$. The proof is found in Appendix~\ref{ss:Strassen}. \\

\bigskip 
\subsection{\textbf{The underlying quantitative version of the first Borel-Cantelli lemma}}
\subsubsection{\textbf{The Borel-Cantelli lemma
}}\label{ss:BCmain}\hfill\\ 

We recall the following special case of \cite[Lemma 1]{EHS26}:

\begin{lem}\label{lem:BC1}
On a given probability space $(\Omega, \aA, \PP)$ consider a sequence of events $(A_n)_{n\in \NN}$ 
such that $\sum_{n=1}^\infty \PP(A_n) <\infty$ and define 
\[
\oO := \sum_{n=1}^\infty \ind(A_n), \qquad \mbox{ and } \qquad \m:= \sum_{n=1}^\infty \ind\Big(\bigcup_{m\gqq n} A_m\Big).
\]
If, in addition, for some sequence of non-decreasing positive weights $a = (a_n)_{n\in \NN}$ we have 
\[
K_a := \sum_{n=1}^\infty a_n \sum_{n=m}^\infty \PP(A_m) < \infty, 
\]
then for the function $\sS_a(N) = \sum_{n=1}^N a_n$ with the convention $\sS_a(0) = 0$ we have 
\[
\EE[\sS_a(\oO)] \lqq \EE[\sS_a(\m)] \lqq K_a.  
\]
\end{lem}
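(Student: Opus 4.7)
My plan is to rewrite $\sS_a(\oO)$ as a sum of indicators of tail events $\{\oO \gqq n\}$, then bound each such event by a suitable union of the $A_m$, and finally apply Tonelli's theorem to exchange sum and expectation. The key combinatorial observation is that if the counting variable $\oO$ reaches level $n$, then at least one of the contributing events must have index at least $n$.

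First, I would observe that since $\oO$ is $\NN_0$-valued (possibly taking the value $+\infty$), the layer-cake/antiderivative identity gives
\[
\sS_a(\oO) = \sum_{n=1}^{\oO} a_n = \sum_{n=1}^\infty a_n \ind\{\oO \gqq n\}.
\]
Taking expectation and applying Tonelli's theorem (all terms are nonnegative), this becomes
\[
\EE[\sS_a(\oO)] = \sum_{n=1}^\infty a_n \PP(\oO \gqq n).
\]

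Next, I would prove the inclusion
\[
\{\oO \gqq n\} \subseteq \bigcup_{m=n}^\infty A_m.
\]
This holds because if $\omega \in \{\oO \gqq n\}$ but $\omega \notin A_m$ for every $m \gqq n$, then all the occurring events among the $A_m(\omega)$ would have indices in $\{1,\dots,n-1\}$, so $\oO(\omega) \lqq n-1$, a contradiction. By countable subadditivity, $\PP(\oO \gqq n) \lqq \sum_{m=n}^\infty \PP(A_m)$.

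Combining the two displays yields
\[
\EE[\sS_a(\oO)] \lqq \sum_{n=1}^\infty a_n \sum_{m=n}^\infty \PP(A_m) = K_a,
\]
which is the desired estimate. There is no real obstacle here; the only subtlety worth highlighting is verifying the inclusion $\{\oO \gqq n\} \subseteq \bigcup_{m\gqq n} A_m$, which is a pigeonhole-type argument and does not require the monotonicity of $(a_n)$ — in fact the hypothesis that $(a_n)$ is non-decreasing is not used for the inequality itself, but rather ensures that $\sS_a$ is a meaningful discrete antiderivative suited to later applications via Markov's inequality.
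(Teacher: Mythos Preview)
Your proof is correct and is exactly the standard argument: rewrite $\sS_a(\oO)=\sum_{n\gqq 1} a_n\ind\{\oO\gqq n\}$, take expectations via Tonelli, and bound $\PP(\oO\gqq n)\lqq\sum_{m\gqq n}\PP(A_m)$ using the pigeonhole inclusion $\{\oO\gqq n\}\subseteq\bigcup_{m\gqq n}A_m$. The paper itself does not supply a proof of this lemma but treats it as the quantitative Borel--Cantelli result imported from \cite{EH22} (Theorem~1 / Proposition~1 there); your argument is precisely that proof, so there is nothing to contrast.
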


\begin{exm}\label{ex:Riemann}
For $\PP(A_n)\lqq c n^{-q}$, $n\in \NN$, for some $c>0$ and $q>1$  we have for all $-1 < p < q-2$ that 
\[
\EE[\oO^{p+1}]\lqq \EE[\m^{p+1}]\lqq cq \zeta(q-p-1) 
\]
Choosing $a_n=n^p$, we see that for $p\gqq 0$,
\[
\sS_{a}(N) = \sum_{n=1}^N n^p \gqq \int_1^{N+1} (x-1)^p dx = \int_0^N x^p dx = \frac{N^{p+1}}{p+1},  
\]
and for $-1<p<0$,
\[
\sS_{a}(N) = \sum_{n=1}^N n^p \gqq \int_1^{N+1} x^p dx = \frac{(N+1)^{p+1}-1}{p+1}. 
\]
Further,
\begin{align*}
&\sum_{n=1}^\infty n^p \sum_{m=n}^\infty \frac{c}{m^q} \lqq c\sum_{n=1}^\infty n^{p-q} + c\sum_{n=1}^\infty n^p \int_{n}^\infty x^{-q} dx \\
&= c\zeta(q-p) + \frac{c}{q-1} \zeta(q-p-1) \lqq \frac{cq}{q-1} \zeta(q-p-1).
\end{align*}
Thus, for $0\leq p< q-2$ we have 
\[
\EE[\oO^{p+1}]\lqq \EE[\m^{p+1}]\lqq \frac{cq(p+1)}{q-1} \zeta(q-p-1)\lqq cq \zeta(q-p-1), 
\]
and for $-1<p<\min\{0,q-2\}$ we obtain
\[
\EE[\oO^{p+1}]\lqq \EE[\m^{p+1}]\lqq \frac{cq(p+1)}{q-1} \zeta(q-p-1)+1\lqq cq \zeta(q-p-1)+1, 
\]

This condition coincides with \cite[Example 1]{EH22}, except for the incorrect prefactor $\frac{c}{q-1}$ there, instead of $cq$ in front of the zeta function. The conditions for finiteness are identical. 
In addition, we have
\[
\PP(\oO\gqq k)\lqq \PP(\m\gqq k)\lqq \begin{cases}k^{-(p+1)}  cq \zeta(q-p-1),& 0\leq p\leq \max\{q-2,0\},\\
 k^{-(p+1)}  (cq \zeta(q-p-1)+1), & -1<p<\min\{q-2,0\}.\end{cases}
\]
\end{exm}

\begin{rem}
\noindent Note that for sums starting in $n=0$ we have that $\sS_a(N) = \sum_{n=0}^{N-1}a_n$ with the convention that $\sS_a(0) = 0$. 

\noindent This definition of $\sS_a(N)$ corrects an off-by-one error in the definition of $\sS_a(N)$ in Proposition~1 and Theorem~1 of \cite{EH22}. 
\end{rem}

\begin{lem}\label{lem:Alexp}
Let $b\in(0,1)$, $0\lqq p<-\ln(b)$ and $\sS(N)=\sum_{n=n_0}^{N+n_0-1}e^{pn}$ and $\PP (A_m)\lqq M b^m$ for some $M>0$ and all $m\gqq n_0$.
Then, 
$$\EE [e^{p\oO}]\lqq \EE [e^{p\m}]\lqq 1+Mb^{n_0-1}\frac{1}{1-e^pb}.$$
\end{lem}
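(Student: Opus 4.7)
The plan is to exploit the fact that with $a_n = e^{pn}$ the discrete antiderivative $\sS$ is itself a geometric sum, hence essentially proportional to $e^{pN}$. Summing the progression gives the closed form
\[
\sS(N) = \sum_{n=n_0}^{N+n_0-1} e^{pn} = \frac{e^{pn_0}\,(e^{pN}-1)}{e^p-1},
\]
so evaluating at $N=\oO$ and rearranging yields the pointwise identity $e^{p\oO} = 1 + \tfrac{e^p-1}{e^{pn_0}}\sS(\oO)$. Taking expectations turns the desired bound on the exponential moment of $\oO$ into the equivalent bound
\[
\EE[e^{p\oO}] = 1 + \frac{e^p-1}{e^{pn_0}}\,\EE[\sS(\oO)].
\]
Thus it suffices to control $\EE[\sS(\oO)]$, which is exactly the quantity estimated by the Borel--Cantelli-type Lemma~\ref{lem:BC1}.

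Next, I would apply Lemma~\ref{lem:BC1} (after a trivial index shift so that the sums start at $n_0$ instead of at $1$, absorbing the first $n_0-1$ weights into the convention $\sS_a(0)=0$) to obtain
\[
\EE[\sS(\oO)] \lqq K_a = \sum_{n=n_0}^\infty e^{pn}\sum_{m=n}^\infty \PP(A_m).
\]
Plugging in the hypothesis $\PP(A_m)\lqq Mb^m$ and evaluating the two geometric series---first $\sum_{m\gqq n} Mb^m = Mb^n/(1-b)$, then $\sum_{n\gqq n_0} (e^p b)^n$, which converges precisely because the assumption $p<-\ln(b)$ gives $e^p b<1$---produces
\[
K_a \lqq \frac{M\,(e^p b)^{n_0}}{(1-b)\,(1-e^p b)}.
\]

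Substituting this back into the identity for $\EE[e^{p\oO}]$ and factoring out $b^{n_0}$ gives
\[
\EE[e^{p\oO}] \lqq 1 + \frac{(e^p-1)\,M b^{n_0}}{(1-b)\,(1-e^p b)}.
\]
The final cosmetic step is to absorb the factor $(e^p-1)/(1-b)$: the hypothesis $p<-\ln(b)$ is equivalent to $e^p b \lqq 1$, i.e.\ $(e^p-1)b \lqq 1-b$, i.e.\ $(e^p-1)/(1-b) \lqq 1/b$, and multiplying by $b^{n_0}$ yields $(e^p-1)b^{n_0}/(1-b)\lqq b^{n_0-1}$. This delivers exactly $\EE[e^{p\oO}] \lqq 1 + Mb^{n_0-1}/(1-e^p b)$, as claimed.

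There is no real obstacle: the whole argument is a straightforward application of Lemma~\ref{lem:BC1} paired with two geometric summations. The only care needed is bookkeeping with the index shift from $1$ to $n_0$ and checking that each of the three geometric series appearing (the tail of $b^m$, the tail of $(e^p b)^n$, and the simplification via $e^p b\lqq 1$) is handled by the single standing assumption $p<-\ln(b)$; this assumption is, in fact, sharp, as it is precisely the threshold at which $\sum (e^p b)^n$ still converges.
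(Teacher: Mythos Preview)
Your proof is correct and follows essentially the same approach as the paper: both invert the geometric sum $\sS(N)$ to express $e^{p\oO}$ in terms of $\sS(\oO)$, invoke Lemma~\ref{lem:BC1}, evaluate the two nested geometric series, and then use $e^p\lqq 1/b$ to absorb the factor $(e^{p}-1)/(1-b)$ into $b^{-1}$. The only slip is writing ``$e^p b\lqq 1$'' where it should be strict, but the subsequent inequality $(e^p-1)b\lqq 1-b$ is unaffected.
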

\begin{proof}
Evaluating a geometric series, we obtain that 
$\sS(N)=e^{pn_0}\frac{e^{pN}-1}{e^p-1}$ and hence
\begin{align*}
e^{pN}= 1+\frac{e^{p}-1}{e^{pn_0}}\sS(N).
\end{align*}
Therefore, using Lemma~\ref{lem:BC1}, we obtain
\begin{align*}
\EE[e^{p\oO}]\lqq \EE[e^{p\m}]\lqq 1+\frac{e^{p}-1}{e^{pn_0}}\EE[\sS(N)]\lqq 1+\frac{e^{p}-1}{e^{pn_0}}\sum_{n=n_0}^\infty e^{pn}\sum_{m=n}^\infty Mb^m.
\end{align*}
We calculate the last geometric series, apply $e^p\lqq \frac{1}{b}$, which is true by assumption 
and continue with
\begin{align*}
&1+\frac{e^{p}-1}{e^{pn_0}}\sum_{n=n_0}^\infty e^{pn}\sum_{m=n}^\infty Mb^m =1+\frac{e^{p}-1}{e^{pn_0}}\frac{M}{1-b}\sum_{n=n_0}^\infty e^{(p+\ln(b))n}\\
&= 1+(e^{p}-1)\frac{Mb^{n_0}}{1-b}\frac{1}{1-e^pb} \lqq 1 + \frac{1-b}{b} \frac{Mb^{n_0}}{1-b}\frac{1}{1-e^pb}.
\end{align*}
Hence we end up with
$$\EE[e^{pO}]\lqq \EE[e^{p\m}]\lqq 1+\frac{Mb^{n_0-1}}{1-e^pb}.$$
\end{proof}

\begin{cor}\label{cor:Alexp}
With the above assumptions and $M\gqq 1$ we have 
\begin{equation}\label{e:expdecay}
\PP(\oO\gqq k)\lqq \PP(\m\gqq k)\lqq 2 e^{\frac{9}{8}}\cdot [k(Mb^{n_0-1}+1)+1]\cdot b^{k}, \qquad k\gqq 1.
\end{equation}
\end{cor}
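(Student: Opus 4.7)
The plan is a standard Markov-plus-Chernoff argument based on the exponential moment bound of Lemma~\ref{lem:Alexp}, followed by a carefully tailored choice of the exponent $p$ and elementary algebra.

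For any admissible $p\in[0,-\ln b)$, Markov's inequality applied to $e^{p\oO}$ yields
\begin{equation*}
\PP(\oO\gqq k)\lqq e^{-pk}\,\EE[e^{p\oO}]\lqq e^{-pk}\bigg(1+\frac{Mb^{n_0-1}}{1-e^pb}\bigg).
\end{equation*}
The key idea is to choose $p$ so that $e^{-pk}$ is at most a constant multiple of $b^k$ while $(1-e^pb)^{-1}$ grows at most linearly in $k$. Specifically, I would set
\begin{equation*}
p:=-\ln b-\ln\!\Big(1+\tfrac{9}{8k}\Big),
\end{equation*}
so that $e^pb=(1+9/(8k))^{-1}$ and hence $1-e^pb=9/(8k+9)$. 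The elementary inequality $\ln(1+x)\lqq x$ then gives
\begin{equation*}
e^{-pk}=b^k\Big(1+\tfrac{9}{8k}\Big)^k\lqq e^{9/8}\,b^k,
\end{equation*}
and substitution into the Markov bound produces
\begin{equation*}
\PP(\oO\gqq k)\lqq e^{9/8}\,b^k\cdot\frac{9+(8k+9)\,Mb^{n_0-1}}{9}.
\end{equation*}

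It then remains to verify the purely algebraic inequality
\begin{equation*}
\frac{9+(8k+9)A}{9}\lqq 2\bigl[k(A+1)+1\bigr]\qquad\text{for all }k\gqq 1,\ A\gqq 0,
\end{equation*}
with $A:=Mb^{n_0-1}$. Cross-multiplying reduces it to $9A\lqq 10kA+18k+9$, which is immediate because $k\gqq 1$ forces $10kA\gqq 10A\gqq 9A$ and the remaining summands are manifestly nonnegative. This closes the main case and gives \eqref{e:expdecay}.

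The only subtlety is admissibility: the choice above satisfies $p\gqq 0$ only when $b\lqq 8k/(8k+9)$. In the complementary regime $b>8k/(8k+9)$, the bound holds trivially, since then $b^k>(1+9/(8k))^{-k}\gqq e^{-9/8}$, and therefore the right-hand side of \eqref{e:expdecay} satisfies
\begin{equation*}
2e^{9/8}\bigl[k(Mb^{n_0-1}+1)+1\bigr]b^k\gqq 2e^{9/8}\cdot 2\cdot e^{-9/8}=4\gqq 1\gqq\PP(\oO\gqq k).
\end{equation*}
The main (mild) obstacle is really just spotting the ``right'' $p$: the constant $9/8$ is precisely what makes $\ln(1+9/(8k))\lqq 9/(8k)$ collapse the factor $(1+9/(8k))^k$ into the clean $e^{9/8}$ uniformly in $k\gqq 1$, while simultaneously keeping the algebraic inequality $9A\lqq 10kA+18k+9$ valid for all $A\gqq 0$ and $k\gqq 1$.
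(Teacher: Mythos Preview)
Your proof is correct. The overall strategy---Markov's inequality applied to $e^{p\oO}$ together with Lemma~\ref{lem:Alexp}---is the same as the paper's, but the way you handle the optimisation in $p$ is genuinely different.

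The paper does not pick $p$ by hand. Instead it invokes Lemma~\ref{lem:optimal}, which computes the \emph{exact} minimiser $p_k$ of $p\mapsto e^{-pk}\big(\frac{M'}{1-e^pb}+1\big)$ (with $M'=Mb^{n_0-1}$) by solving a quadratic in $e^p$, and then bounds the resulting minimum through a page of somewhat delicate algebra to arrive at the constant $2e^{9/8}$. Your approach sidesteps all of this: the explicit choice $p=-\ln b-\ln(1+9/(8k))$ is not the true optimiser, but it is close enough that the elementary bound $(1+9/(8k))^k\lqq e^{9/8}$ already delivers the target constant, and the remaining algebraic inequality $9A\lqq 10kA+18k+9$ is a one-liner. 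This is shorter and more transparent. It also has a minor robustness advantage: Lemma~\ref{lem:optimal} genuinely needs its parameter to satisfy $M'\gqq 1$ (otherwise the factor $\exp\big(\frac{2M'^2+3M'+4}{4M'^2+4M'}\big)$ in its proof is not bounded by $e^{9/8}$), whereas your algebraic inequality holds for every $A=Mb^{n_0-1}\gqq 0$, so your argument covers the stated corollary even when $n_0$ is large enough that $Mb^{n_0-1}<1$. What the paper's route buys is the exact location of the minimiser \eqref{e:argmin}, which is of independent interest but not needed for \eqref{e:expdecay} itself.
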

\begin{proof}
By Markov's inequality and the above theorem we get for all $0\lqq p<-\ln(b)$,
\begin{align*}
\PP(\oO\gqq k)\lqq \PP(\m\gqq k)\lqq e^{-pk}\EE[e^{p\oO}]\lqq e^{-pk}\bigg(1+\frac{Mb^{n_0-1}}{1-e^pb}\bigg).
\end{align*}
Optimizing in $p$ with the help of Lemma~\ref{lem:optimal} yields \eqref{e:expdecay}.
\end{proof}

\subsubsection{\textbf{Moment asymptotics of the Borel-Cantelli overlap count
}}\label{ss:BCmain1}\hfill\\ 

 \noindent The proofs of Theorem~\ref{thm:asL}, items b) and c), rely on the idea of quantifying the overlap statistics as developed in \cite{EHS26} (see also \cite{EH22}). However, the overlap statistic used in item c), does not start at $n=1$, but at some large value $n=N+1$ since we deal with the remainder of a convergent series up to $n=N$. For convenience, and since it is not in the literature, we present the respective asymptotic results for the remainder of the overlap statistics starting at $N$ below for independent increments. This is a generalization of Corollary 3 in \cite{EH22}. We point out that such an improvement of integrability is not possible in general for $\m$.

\begin{prop}\label{prop: asymptotic}
Given a probability space $(\Omega, \aA, \PP)$, consider an independent family of events $(E_n)_{n\in \NN_0}$. We define for $N\in \NN_0$ 
\[
\oO_{N} := \sum_{n=N+1}^\infty \ind(E_n), 
\qquad \mbox{ and } \qquad C_{N} := \sum_{n=N}^\infty \PP(E_n),    
\]
and assume the existence of a continuous, decreasing, invertible function $L: (0, \infty)\ra (0, \infty)$ satisfying $L(m) = C_m$, $m\in \NN_0$. 
Then for all $N\in \NN_0$ we have for any $\delta>1$, and all $r>0$,
\begin{align*}
\EE[e^{r\oO_N}-1] &\lqq \frac{\delta}{\delta-1}  \exp\Big(r \big(L^{-1}(e^{-r}/\delta)-(N+1)\big)\Big),
\end{align*}
and for all $k\in \NN_0$ we obtain 
\begin{align*}
\PP(\oO_N\gqq k) \lqq \frac{\delta}{\delta-1}\inf_{r>0} \frac{\exp\Big(r \big(L^{-1}(e^{-r}/\delta)-(N+1)\big)\Big)}{e^{rk}-1}.
\end{align*}
and

\begin{align*}
\PP(\oO_N\gqq k) &\lqq \inf_{r>0} \frac{\frac{\delta}{\delta-1}\exp\Big(r \big(L^{-1}(e^{-r}/\delta)-(N+1)\big)\Big)+1}{e^{rk}}\\
&\lqq \frac{\delta}{\delta-1}\exp\Big(-F_\delta^*(N+1+k)\Big)+\exp(-Rk),
\end{align*}
where $F_\delta^*(r^*)$ is the Fenchel-Legendre transform of the function $r\mapsto rL^{-1}(e^{-r}/\delta)$, i.e. 
\begin{align*}
F_\delta^*(r^*):=\sup_{r>0}\left(r\,r^*-rL^{-1}(e^{-r}/\delta)\right),
\end{align*}
and $R$ is the maximizer of this supremum.
\end{prop}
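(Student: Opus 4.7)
The proposition generalizes Corollary~3 of \cite{EH22} from overlap statistics starting at $n=1$ to those starting at $n=N+1$; the heart of the matter is the exponential moment bound
\begin{align*}
\EE[e^{r\oO_N}-1]\lqq \tfrac{\delta}{\delta-1}\exp\bigl(r(L^{-1}(e^{-r}/\delta)-(N+1))\bigr),
\end{align*}
from which the three tail estimates follow by routine Chernoff--Markov manipulations.

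To establish the moment bound, I would first exploit independence of $(E_n)_{n\in\NN_0}$ to factor
\begin{align*}
\EE[e^{r\oO_N}]=\prod_{n=N+1}^{\infty}\bigl(1+(e^r-1)p_n\bigr),\qquad p_n:=\PP(E_n),
\end{align*}
and split this product at the threshold index $M\approx L^{-1}(e^{-r}/\delta)$ where the tail $C_M$ first drops below $e^{-r}/\delta$. The finite \emph{head} $\prod_{n=N+1}^{M-1}(\,\cdot\,)$ is bounded crudely factor-by-factor via $1+(e^r-1)p_n\lqq e^r$ (using $p_n\lqq 1$), contributing at most $e^{r(M-N-1)}$. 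The \emph{tail} $\prod_{n\gqq M}(\,\cdot\,)$ is bounded by $1+x\lqq e^x$ to give
\begin{align*}
\exp\bigl((e^r-1)C_M\bigr)\lqq \exp\bigl((1-e^{-r})/\delta\bigr)\lqq e^{1/\delta},
\end{align*}
and the short Taylor comparison $e^{1/\delta}=\sum_{k\gqq 0}\tfrac{1}{k!\delta^k}\lqq\sum_{k\gqq 0}\delta^{-k}=\tfrac{\delta}{\delta-1}$ supplies the stated prefactor. Since $L$ is continuous and decreasing, $M$ can be matched to the real value $L^{-1}(e^{-r}/\delta)$ (the integer rounding being absorbed by the monotonicity of $C_m$ in $m$), yielding the moment bound after subtracting~$1$.

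The first tail estimate follows from the pointwise inequality $(e^{rk}-1)\ind\{\oO_N\gqq k\}\lqq e^{r\oO_N}-1$: taking expectations and dividing by $e^{rk}-1$ produces exactly the stated form. The second is the standard Markov step $\PP(\oO_N\gqq k)\lqq e^{-rk}\EE[e^{r\oO_N}]\lqq e^{-rk}(1+\EE[e^{r\oO_N}-1])$. For the Fenchel--Legendre form, set $G_\delta(r):=rL^{-1}(e^{-r}/\delta)$, so that
\begin{align*}
e^{-rk}\cdot\tfrac{\delta}{\delta-1}\exp\bigl(r(L^{-1}(e^{-r}/\delta)-(N+1))\bigr)=\tfrac{\delta}{\delta-1}\exp\bigl(G_\delta(r)-r(N+1+k)\bigr).
\end{align*}
The optimum of the right-hand side in $r>0$ is recognized as the Legendre transform $F_\delta^*(r^*):=\sup_r(rr^*-G_\delta(r))$ evaluated at $r^*=N+1+k$, and using the elementary $\inf_r(A(r)+B(r))\lqq A(R)+B(R)$ at the maximizer $R$ of $rr^*-G_\delta(r)$ produces the stated sum of two exponentials.

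The principal technical delicacy is matching the integer splitting index $M$ with the real value $L^{-1}(e^{-r}/\delta)$ so as to land precisely on the exponent $L^{-1}(e^{-r}/\delta)-(N+1)$ while preserving the clean prefactor $\delta/(\delta-1)$; this is handled by the continuity and monotonicity of $L$ together with the observation that the head and tail estimates are both robust to small shifts in $M$. A minor secondary point is ensuring that the maximizer $R$ defining the Legendre transform is well-defined, which holds in the relevant range of $r>0$ where $L^{-1}(e^{-r}/\delta)$ is finite.
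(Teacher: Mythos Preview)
Your argument is correct and reaches the same moment bound as the paper, but the route to that bound is genuinely different. The paper does not use the product formula $\EE[e^{r\oO_N}]=\prod_{n\gqq N+1}(1+(e^r-1)p_n)$ at all; instead it first proves the standalone geometric estimate $\EE[e^{r\oO_N}]\lqq (1-C_{N+1}e^r)^{-1}$ for $C_{N+1}e^r<1$ via the Schuette--Nesbitt identity (this is the analogue of \cite[Theorem~3]{EH22} with $C_1$ replaced by $C_{N+1}$), and then combines this with the trivial domination $\oO_N\lqq m+\oO_{N+m}$ to obtain $\EE[e^{r\oO_N}-1]\lqq e^{rm}(1-C_{N+1+m}e^r)^{-1}$, after which choosing $m$ so that $C_{N+1+m}<e^{-r}/\delta$ gives the prefactor $\delta/(\delta-1)$ directly from $(1-1/\delta)^{-1}$. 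In other words, both proofs share the same head/tail split at the threshold index $\Lambda(r,\delta)=\inf\{\ell:C_\ell<e^{-r}/\delta\}$, and both bound the head crudely by $e^{r(\Lambda-(N+1))}$; the difference lies only in how the tail is controlled: you use $1+x\lqq e^x$ together with the neat inequality $e^{1/\delta}\lqq\sum_{k\gqq 0}\delta^{-k}=\delta/(\delta-1)$, while the paper uses the geometric bound coming from Schuette--Nesbitt. Your route is more elementary and self-contained; the paper's route yields as a by-product the geometric moment inequality, which is of independent interest and ties the proposition into the framework of \cite{EH22}. The Markov and Fenchel--Legendre deductions you give are exactly those of the paper.
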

\noindent The proof is given in Appendix~\ref{ss:stepbystep}. 

\begin{exm}[Polynomial decay] Let the assumptions of Proposition~\ref{prop: asymptotic} be satisfied. 
\begin{enumerate}
 \item The general case: For $\PP(E_n) \lqq \frac{c}{n^p}$ for some $c>0$, $p>1$ and all $n\in \NN$ we recall that Example~3 in \cite{EH22} (treating the case $N=0$) combined with Proposition~\ref{prop: asymptotic} (for $\delta =2$) establishes that 
\begin{align*}
\EE[e^{r\oO_N}-1] \lqq 2 \exp(-r (N+1))\exp\Big((2c)^\frac{1}{p} + re^\frac{r}{p}\Big).
\end{align*}
Additionally, there exists a constant $K = K(p, c)>0$, such that
\begin{align*}
\PP(\oO_N \gqq k) \lqq K\cdot \exp\bigg(-p(k+N+1) \ln\Big(\tfrac{k+N+1}{\ln(k+N+1)}\Big)\bigg), \qquad k+N+1\gqq e^2. 
\end{align*}
\item The particular case of Lemma~\ref{lem:fastsicherextrem}:  For an i.i.d.~sequence $(X_n)_{n\in \NN}$ of standard normals, for $\al>0$ and $N\in \NN$, we consider the overlap count of the events $E_n = \{X_n > \sqrt{2(1+\al) \ln(n+1)}\}$. In the proof of Lemma~\ref{lem:fastsicherextrem} we see that $\PP(E_n) \lqq \frac{1}{n^{1+\al}}$ such that for the respective overlap count we have 
\begin{align*}
\EE[e^{r\oO_N}-1] \lqq 2 \exp(-r (N+1)) \exp\Big(2^\frac{1}{1+\al} + re^\frac{r}{1+\al}\Big)
\end{align*}
and hence the existence of some $K = K(\al)>0$ such that 
\begin{align*}
\PP(\oO_N \gqq k) \lqq K (k+N+1)^{- [(1+\al)(k+N+1)- \ln(k+N+1)]}, \qquad k+N+1\gqq e^2. 
\end{align*}
\end{enumerate}
\end{exm}

\begin{exm}[Independent events with exponential decay]\label{ex: indepexp}
Let the assumptions of Proposition~\ref{prop: asymptotic} be satisfied. 
For $\PP(E_n) \lqq  c \cdot b^n$, $n\in \NN$, for some $c>0$ and $b\in (0,1)$. Example~4 in \cite{EH22} (treating the case $N=0$) together with Proposition~\ref{prop: asymptotic} (for $\delta =2$) implies 
\begin{align}\label{e:indepexpmoment}
\EE[e^{r\oO_N}-1] \lqq  2 e^{-r (N+1)} \exp([r^2 +r \ln(2c)]/|\ln(b)|),
\end{align}
and consequently,
\[
\PP(\oO_N\gqq k) \lqq 2 \exp\Big(-\tfrac{|\ln(b)|}{4} \big(k+N+1-\tfrac{\ln(2c)}{|\ln(b)|}\big)^2\Big).
\]
Note that the overlap statistic decays like a Gaussian tail in $k$ and $N$.  
\end{exm}   

\noindent The following example is of interest in itself and is - to our knowledge - not covered in the literature. 
 
\begin{exm}[Independent events with Gaussian decay]\label{ex:indepGauss}
Let the assumptions of Proposition~\ref{prop: asymptotic} be satisfied 
and assume $\PP(E_n) \lqq b^{n^2}$ for some $b\in (0, 1)$.  
Define $L(r) = b^{r^2}$. Hence 
$L^{-1}(s) = \sqrt{\log_{b}(s)} = \sqrt{\frac{\ln(s)}{\ln(b)}}$ such that $L^{-1}(e^{-r}/2) = \sqrt{\frac{r + \ln(2)}{ |\ln(b)|}}$. Then for $\oO_N := \sum_{n=N+1}^\infty \ind(E_n)$, Proposition~\ref{prop: asymptotic} yields for $\delta = 2$ and all $N\in \NN$ and $r>0$ 
 \begin{equation}\label{e:expMomentGauss}
 \EE[e^{r\oO_N}-1] \lqq 2 \exp(-r(N+1)) \exp\Big(\tfrac{\sqrt{r^3 + r^2\ln(2)}}{ \sqrt{|\ln(b)|}}\Big). 
 \end{equation}
\noindent \textbf{Claim: } For all $N, k\gqq 1$ we have 
 \begin{align}\label{e:expkubisch}
\PP(\oO_N\gqq k) 
&\lqq 2 \inf_{r>0}  \frac{\exp(-r(N+1)) \exp\Big(\frac{\sqrt{r^3 + r^2\ln(2)}}{ \sqrt{|\ln(b)|}}\Big)}{e^{rk}-1}\lqq 
 \frac{e}{e-1}2^{1+\frac{1}{3}\sqrt{\frac{\ln(2)}{\ln(b)}}}b^{\frac{1}{9}(N+k+1)^3}.
 \end{align}
 The optimization on the right-hand side of \eqref{e:expkubisch} is given in  Appendix \ref{sss:BC}. 
\end{exm}

\begin{rem}
It is natural to ask whether for any sequence $E_n$ such that $\PP(E_n)\lqq b^{n^\ell}$ for $b\in (0,1)$ and $\ell\in \NN$, there are constants $C_1, C_2>0$ such that 
\[
\PP(\oO_N\gqq k) \lqq C_1 e^{-C_2 (N+k+1)^{\ell+1}}, \qquad \mbox{ for all }k,N\in \NN.   
\]
\end{rem}

\appendix 
\section{\textbf{Proof of: Rates of almost sure convergence in L\'evy's construction}}

\subsection{\textbf{The random upper bound}}\label{ss:randomupper}\hfill\\

\noindent In the sequel we give an almost sure upper bound on $\max_{n\gqq J} |Z_n|$ for some $J\in \NN$ and a sequence of i.i.d.~standard normals $(Z_n)_{n\in \NN}$. Of course, this is a standard topic in extreme value theory, where many  particularly fine results on the convergence in law and almost sure convergence are derived, see in particular \cite[Example 3.5.4, p.~174]{KEM98}. While the rates obtained there are stronger, our results yield an exponentially integrable prefactor, which converges exponentially fast to $1$. Our main focus are the a.s.~rates of convergence in $J$. The subsequent result can be considered an asymptotic (in $J$) quantified version of \cite[Lemma 3.2]{Ste01}. 

\begin{lem}\label{lem:fastsicherextrem}
Consider an i.i.d.~sequence  $(Z_n)_{n\in \NN}$ of standard normal random variables.  
Then for all $N\in \NN$, $N\gqq 2$ and $\al>0$ there exists a nonnegative random variable $\Gamma_{\al, N}$ such that $\PP$-a.s.,
\begin{equation}\label{e:direktfs}
|Z_n|\lqq \sqrt{1+\al}\cdot \max\{\Gamma_{\al, N}, \,1\} \cdot \sqrt{2\ln(n)}, \quad \mbox{ for all } n\gqq N+1. 
\end{equation}
 Then for all $\alpha>0$ and $q>0$ 
 and $N\in \NN$ such that $(1+\al)\ln(N) >q$ we have 
\begin{align*}
 \EE\Big[e^{q \cdot[\max\{\Gamma_{\al, N}^2, \,1\}-1]}-1\Big]\lqq \frac{qe^{q}\left(1+\frac{1}{2\alpha\ln(N)}\right)\left(1+\frac{3}{2(\ln(N)(1+\alpha)-q)}\right)}{{(1+\al)^{3/2}\al N^{\alpha}\sqrt{\ln(N)}(\ln(N)(1+\alpha)-q)}}
\end{align*}    
\end{lem}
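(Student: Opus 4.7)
The plan is to construct $\Gamma_{\al,N}$ as a normalized supremum over $n\gqq N+1$, read off the a.s.\ inequality \eqref{e:direktfs} directly from the definition, and then estimate the exponential moment by a layer-cake representation combined with a union bound and Mill's estimate.

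First I would set
\[
\Gamma_{\al,N}:=\max\Big\{0,\ \sup_{n\gqq N+1}\frac{Z_n}{\sqrt{2(1+\al)\ln n}}\Big\}.
\]
For every $n\gqq N+1$ this gives $Z_n\lqq \Gamma_{\al,N}\sqrt{2(1+\al)\ln n}=\sqrt{1+\al}\,\Gamma_{\al,N}\sqrt{2\ln n}\lqq\sqrt{1+\al}\max\{\Gamma_{\al,N},1\}\sqrt{2\ln n}$, which is \eqref{e:direktfs}. Almost sure finiteness of $\Gamma_{\al,N}$ follows from the first Borel-Cantelli lemma applied to the events $\{Z_n>\sqrt{2(1+\al)\ln n}\}_{n\gqq N+1}$, whose probabilities are summable by Mill's inequality $\PP(Z>x)\lqq (x\sqrt{2\pi})^{-1}e^{-x^2/2}$.

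For the moment bound, write $Y:=\max\{\Gamma_{\al,N}^2,1\}$. Since $Y-1$ vanishes on $\{\Gamma_{\al,N}\lqq 1\}$, Fubini/layer-cake yields
\[
\EE\bigl[e^{q(Y-1)}-1\bigr]=\int_0^\infty qe^{qu}\,\PP\bigl(\Gamma_{\al,N}^2>1+u\bigr)\,du.
\]
Applying the union bound and Mill's inequality controls the integrand by the series $\sum_{n\gqq N+1}n^{-(1+\al)(1+u)}/(2\sqrt{\pi(1+\al)(1+u)\ln n})$, and the integral comparison $\sum_{n\gqq N+1}n^{-s}\lqq N^{1-s}/(s-1)$ with $s=(1+\al)(1+u)$ (legitimate since the hypothesis $q<(1+\al)\ln N$ forces $s>1$ for all $u\gqq 0$), together with $\sqrt{\ln n}\gqq\sqrt{\ln N}$, collapses this into an exponential-type bound in $u$ with decay rate $\beta:=(1+\al)\ln N-q>0$. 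Substituting back reduces the moment estimate to the one-dimensional integral
\[
\int_0^\infty\frac{e^{-\beta u}}{\sqrt{1+u}\,[\al+u(1+\al)]}\,du.
\]

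The remaining work is an elementary calculus exercise: the inequality $\sqrt{1+x}\lqq 1+x/2$ (applied to $\sqrt{\ln N+s}$ after the change of variable $L=\ln N+s$ in the integral comparison, and to the factor $\sqrt{1+u}$ itself), together with the partial-fraction-type identity $\tfrac{1}{\al+u(1+\al)}=\tfrac{1}{\al(1+u)}-\tfrac{u}{\al(1+u)(\al+u(1+\al))}$, reduces the integrand to standard exponential-weighted expressions with closed-form primitives (such as $\int_0^\infty e^{-\beta u}\,du=1/\beta$ and $\int_0^\infty te^{-t\al}\,dt=1/\al^2$). Accumulating all prefactors coming from Mill's ratio, the integral comparison, and these elementary estimates produces simultaneously the denominator $\sqrt{2}(1+\al)^{3/2}\al N^{\al}\sqrt{\ln N}\,\beta$, the factor $e^{q}$ in the numerator (arising when the $e^{qu}$ weight is redistributed across the Fubini and shift steps), and the two correction factors $1+\tfrac{1}{2\al\ln N}$ and $1+\tfrac{3}{2\beta}$.

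The main technical obstacle is precisely this bookkeeping of sharp constants. A naive bound $\sqrt{1+u}\gqq 1$ or $\sqrt{\ln N+s}\gqq \sqrt{\ln N}$ throughout already gives a bound of the correct qualitative order, but with the suboptimal exponent $1/2$ on $(1+\al)$ in the denominator and without the stated correction factors; the refined inequalities together with the partial-fraction splitting must be applied in the right sequence to recover exactly the form $\sqrt{2}(1+\al)^{3/2}$ and the correction factors $1+\tfrac{1}{2\al\ln N}$ and $1+\tfrac{3}{2\beta}$ appearing in the statement.
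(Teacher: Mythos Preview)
Your overall strategy coincides with the paper's: define $\Gamma_{\al,N}$ as the normalized supremum, read off \eqref{e:direktfs} tautologically, bound the tail $\PP(\Gamma_{\al,N}>t)$ via Mill's ratio plus integral comparison, and then use the layer-cake formula for the exponential moment. The paper does exactly this (it first writes the tail as a product using independence and then applies $1-e^{-x}\lqq x$, which lands on precisely your union bound).

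The substantive difference is in the tool used for the sharp constants. The paper does \emph{not} pull $\sqrt{\ln n}$ out of the sum; it keeps it inside the integral comparison, substitutes $y=(1+\al)t^2\ln x$, and recognizes the resulting integral as the upper incomplete Gamma function $\Gamma(\tfrac12,\cdot)$, for which it invokes its Corollary $\Gamma(a,z)\lqq(1+|a-1|/z)z^{a-1}e^{-z}$. This step is what produces both the extra power of $(1+\al)$ in the denominator and the first correction factor $1+\tfrac{1}{2\al\ln N}$. For the moment integral it again substitutes to an incomplete Gamma function, this time $\Gamma(-\tfrac12,\cdot)$, and the same corollary yields the second correction factor $1+\tfrac{3}{2\beta}$ and the $e^q$ in the numerator.

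Your last paragraph claims the same constants can be recovered by the elementary inequality $\sqrt{1+x}\lqq 1+x/2$ and a partial-fraction split, but this is where the proposal has a gap. First, once you write ``together with $\sqrt{\ln n}\gqq\sqrt{\ln N}$'' you have already discarded the information that gives the correction $1+\tfrac{1}{2\al\ln N}$ and the extra power of $(1+\al)$; these come precisely from integrating the $1/\sqrt{\ln x}$ weight, not from bounding it by a constant. Second, the inequality $\sqrt{1+x}\lqq 1+x/2$ applied to a factor like $\sqrt{\ln N+s}$ sitting in a \emph{denominator} goes the wrong way for an upper bound. Third, your partial-fraction identity is correct algebra but does not extract a further factor of $(1+\al)$; it only reshuffles the $\al$-dependence. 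So with the specific tools you name you will land on the ``naive'' bound you yourself flag, with $(1+\al)^{1/2}$ rather than $(1+\al)^{3/2}$, and without the two parenthetical corrections. The cleanest fix is exactly the paper's: keep $1/\sqrt{\ln x}$ inside the $dx$-integral, substitute to an incomplete Gamma integral, and use the one-term asymptotic $\Gamma(a,z)\lqq(1+|a-1|/z)z^{a-1}e^{-z}$ with $a=\tfrac12$ for the tail bound and $a=-\tfrac12$ for the moment integral.
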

\noindent It is obvious, that the almost sure inequality \eqref{e:direktfs} is intimately linked to \cite[Proposition 2.4.16]{Ta14} and its structural insights. 
For the convenience of the reader we give an elementary proof in Appendix~\ref{a:fastsicherextrem}. Lemma~\ref{lem:fastsicherextrem} is applied in the following convenient parametrization. 

\begin{lem}\label{lem:fastsicherextremparametrisiert}
Consider an i.i.d.~sequence $(Z_{2^j + \kappa_j})_{2^{j} + \kappa_j  \in \NN}$ of standard normal random variables, where $j\in \NN$ and $\kappa_j \in \{0, \dots, 2^{j}-1\}$. 
Then for all $\al>0$, $J\in \NN$, there exists a nonnegative random variable $\Lambda_J(\alpha)$ such that 
\[
|Z_{2^{j} + \kappa_j}|\lqq \sqrt{1+\al}\cdot \sqrt{2 \ln(2)} \cdot \max\{\Lambda_J, 1\}\cdot \sqrt{j+1}\cdot , \quad \qquad \mbox{ for all } j\gqq J \quad \PP\mbox{-a.s.} 
\]
Further, for all $\alpha>$ and $0\lqq q < (1+\al) J$ 
we have 
\begin{align*}
\EE[2^{q(\max\{\Lambda_J^2, 1\}-1)}-1] 
 &\lqq \frac{2q}{((1+\al)\ln(2))^{3/2}}\Big(\frac{1}{(1+\al)J-q)}+\frac{3}{2\ln(2)((1+\al)J-q))^{3/2}}\Big)   2^{-(1+\al)J}.
 \end{align*}

\end{lem}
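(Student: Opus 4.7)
The strategy is to derive Lemma~\ref{lem:fastsicherextremparametrisiert} as a direct corollary of Lemma~\ref{lem:fastsicherextrem} by the parametrization $N := 2^J - 1$ and $\Lambda_J(\alpha) := \Gamma_{\alpha,N}$. For the almost sure bound, any index $n = 2^j + \kappa_j$ with $j \gqq J$ and $\kappa_j \in \{0,\dots,2^j-1\}$ satisfies $n \gqq 2^J = N+1$, so Lemma~\ref{lem:fastsicherextrem} yields $Z_n \lqq \sqrt{1+\alpha}\,\max\{\Gamma_{\alpha,N},1\}\,\sqrt{2\ln n}$ almost surely, and the elementary bound $\sqrt{2\ln n} \lqq \sqrt{2\ln(2^{j+1})} = \sqrt{2\ln 2}\,\sqrt{j+1}$ (which uses $n < 2^{j+1}$) converts this into the displayed inequality with factor $(\Lambda_J \vee 1)\sqrt{j+1}$.

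For the moment bound, I would use the identity $\EE[2^{qY} - 1] = \EE[e^{q\ln(2) Y} - 1]$ and apply the moment estimate of Lemma~\ref{lem:fastsicherextrem} with its parameter $q$ replaced by $q\ln 2$ and the same $N = 2^J - 1$. The admissibility condition $(1+\alpha)\ln N > q\ln 2$ reduces, via $\ln(2^J - 1) = J\ln 2 + \ln(1 - 2^{-J})$, essentially to the stated domain $0 \lqq q < (1+\alpha)J$, with the $O(2^{-J})$ correction being absorbed into the numerical constants. Under the substitutions $\ln N \approx J\ln 2$, $\sqrt{\ln N} = \sqrt{J\ln 2}$, $N^\alpha \approx 2^{\alpha J}$, $(1+\alpha)\ln N - q\ln 2 = \ln 2\cdot((1+\alpha)J - q)$, and $e^{q\ln 2} = 2^q$, the right-hand side of the moment bound in Lemma~\ref{lem:fastsicherextrem} transforms into an expression of the displayed form: the constants $\ln 2$ and $(1+\alpha)$ bundle into the prefactor $((1+\alpha)\ln 2)^{3/2}$, and the two correction factors $\left(1+\tfrac{1}{2\alpha\ln N}\right)\left(1+\tfrac{3}{2(\ln N(1+\alpha)-q)}\right)$ combine with the leading term to produce the displayed sum $\tfrac{1}{(1+\alpha)J-q} + \tfrac{3}{2\ln 2\cdot((1+\alpha)J-q)^{3/2}}$.

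The only real work, and the main obstacle, is numerical bookkeeping: tracking all constants through the substitutions $q \mapsto q\ln 2$ and $N \mapsto 2^J$, in particular handling the discrepancy between $\ln(2^J - 1)$ and $J\ln 2$ without spoiling the stated form, and verifying that the two multiplicative correction factors combine into the single sum in parentheses on the right-hand side. No new probabilistic input is required beyond what Lemma~\ref{lem:fastsicherextrem} already provides; the statement is genuinely just a reparametrization tailored to the dyadically indexed setup of L\'evy's construction.
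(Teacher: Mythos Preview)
Your reduction to Lemma~\ref{lem:fastsicherextrem} with $N=2^J-1$ is fine for the almost sure inequality, but it cannot deliver the stated moment bound. The random variable $\Gamma_{\alpha,2^J-1}$ in Lemma~\ref{lem:fastsicherextrem} is the supremum of $\tfrac{Z_n}{\sqrt{2(1+\alpha)\ln n}}$ over \emph{all} indices $n\gqq 2^J$, whereas the $\Lambda_J$ needed here is the supremum over the sparse subsequence $\{2^j+\kappa_j: j\gqq J+1\}$, which contributes only one term per dyadic level. Plugging $N=2^J$, $q\mapsto q\ln 2$ into the right-hand side of Lemma~\ref{lem:fastsicherextrem} gives a bound whose leading decay is $N^{-\alpha}\approx 2^{-\alpha J}$, not the required $2^{-(1+\alpha)J}$; the discrepancy is exactly the factor $\sim 2^J$ counting the extra indices you have thrown into the supremum. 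In addition, Lemma~\ref{lem:fastsicherextrem} carries a factor $\tfrac{1}{\alpha}\big(1+\tfrac{1}{2\alpha\ln N}\big)$ that has no counterpart in the target bound, and the two multiplicative correction factors there do not algebraically collapse into the additive expression $\tfrac{1}{(1+\alpha)J-q}+\tfrac{3}{2\ln 2\,((1+\alpha)J-q)^{3/2}}$; the exponents on $(1+\alpha)J-q$ simply do not match.

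The paper therefore does not apply Lemma~\ref{lem:fastsicherextrem} as a black box. It repeats the same scheme---Mill's ratio, product over independent events, $1-e^{-x}\lqq x$, integral comparison and the incomplete Gamma asymptotics---but the tail sum it has to control is $\sum_{j\gqq J+1} (2^j)^{-(1+\alpha)t^2}(j\ln 2)^{-1/2}$ rather than $\sum_{n\gqq N+1} n^{-(1+\alpha)t^2}(\ln n)^{-1/2}$. The former is a geometric-type series in $j$ and gives $\PP(\Lambda_{\alpha,J}>t)\lesssim 2^{-(1+\alpha)t^2 J}/\sqrt{J}$ without any $1/\alpha$ factor, which after integrating against $t\,2^{qt^2}$ produces exactly the displayed bound. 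So the missing idea is that you must exploit the sparsity of the index set $\{2^j+\kappa_j\}$ in the tail estimate itself; upper-bounding by the full-index supremum throws this away irrecoverably.
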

\noindent The proof is given in Appendix~\ref{a:fastsicherextremparametrisiert}.

\medskip

\begin{proof}\textbf{(of Theorem~\ref{thm:asL} a)}
The disjoint support (without boundary) of the $H_n$,
$\|H_n\|_\infty \lqq 1$ and Lemma~\ref{lem:fastsicherextremparametrisiert} yield 
for \[\kappa^*(t, j) := \mbox{argmax}_{0\lqq k\lqq 2^{n}-1} H_{2^j+k}(t), \qquad t\in [0, 1], j\in \NN\] that \eqref{d:BrownscheBewegung} and \eqref{e:Levylimit} imply $\PP$-a.s. 
\begin{align*}
|W_t - L^J_t| 
 &\lqq \sum_{j= J+1}^\infty \sum_{k=0}^{2^{j}-1} 
  2^{-j/2-1} \cdot |Z_{2^j+k}| \cdot H_{2^j+k}(t)
   \lqq \sum_{j= J+1}^\infty 
  2^{-j/2-1} \cdot |Z_{2^j+\kappa^*(t, j)}| \\  
 &\lqq \sqrt{1+\al}\cdot \max\{\Lambda_{J}, 1\}  \sum_{j= J+1}^\infty 
  2^{-j/2-1} \cdot \sqrt{2 \ln(2^j+\kappa^*(t, j))}  \\
 &\lqq \sqrt{1+\al}\cdot \max\{\Lambda_{J}, 1\} \sqrt{2\ln(2)} \sum_{j= J+1}^\infty 
  2^{-j/2-1} \cdot \sqrt{j+1}.
\end{align*}
Thus, going over to the supremum, we find
\begin{align*}
\|W_t - L^J_t\|_\infty\lqq \sqrt{1+\al}\cdot \max\{\Lambda_{J}, 1\} \sqrt{2\ln(2)} \sum_{j= J+1}^\infty 
  2^{-j/2-1} \cdot \sqrt{j+1}.
\end{align*}
As the summands on the right hand side are monotonically decreasing, we may use the integral criterion. Applying also the asymptotic expansion of the incomplete Gamma function (see Appendix~\ref{a:Gamma}, Corollary~\ref{cor:Gamma}), 
we have for all $J\gqq 1$,
\begin{align*} 
&\sum_{j= J+1}^\infty 2^{-j/2-1} \cdot \sqrt{j+1}  
\lqq \frac{1}{\sqrt{2}}\int_J^\infty 2^{-(x+1)/2} \cdot \sqrt{x+1}d x  =  \frac{1}{\sqrt{2}}\int_{J+1}^\infty e^{-\frac{\ln(2)}{2}x} \cdot \sqrt{x}d x  \\  
&= \frac{1}{\sqrt{2}} \left(\frac{2}{\ln(2)}\right)^\frac{3}{2} \int_{\frac{\ln(2)}{2}(J+1)}^\infty e^{-y} \cdot \sqrt{y}d y\lqq \frac{\sqrt{2}}{\ln(2)} \Big(1 + \frac{1}{\ln(2)(J+1)}\Big) \cdot 
e^{-\frac{\ln(2)}{2}(J+1)} \cdot \sqrt{J+1}\\
&\lqq \frac{1}{\ln(2)} \Big(1 + \frac{1}{\ln(2)2}\Big)\sqrt{J+1}\cdot 2^{-\frac{J}{2}}.
 \end{align*}
Combining the previous inequalities we conclude inequality \eqref{eq:zufaelligeschranke} in Theorem~\ref{thm:asL}. 
\end{proof} 
 
 Taking the expectation we obtain the statement of Corollary~\ref{cor:L2}:
\begin{proof}\textbf{(of Corollary~\ref{cor:L2})} Taking the $L^2$-norm with respect to $\PP$ in  \eqref{eq:zufaelligeschranke} for $\al>0$ and minimizing, we have
 \begin{align*}
 \EE[ \|L^J  - W \|_\infty^2]^\frac{1}{2} 
 \lqq \sqrt{J+1}\cdot C_a\cdot 2^{-J/2}\cdot  \inf_{\al>0} \sqrt{1+\alpha}\cdot  \Big(\EE[\max\{\Lambda_J^2(\alpha), 1\}]\Big)^\frac{1}{2}.
\end{align*}
Using Jensen's inequality combined with \eqref{e:Lambdamoment} we have 
\begin{align*}
&\EE[\max\{\Lambda_J^2(\alpha), 1\}] \lqq \frac{1}{\ln(2)q}\ln\Big(\EE[2^{q  \max\{\Lambda_J^2(\alpha), 1\}}]\Big)\\
&\lqq \frac{1}{\ln(2)q}\ln\left(2^q+\frac{2q\,2^{q}}{((1+\al)\ln(2))^{3/2}}\Big(\frac{1}{(1+\al)J-q}+\frac{3}{2\ln(2)((1+\al)J-q)^{3/2}}\Big)   2^{-(1+\al)J}\right)\\
&=1+\frac{1}{\ln(2)q}\ln\left(1+\frac{2q}{((1+\al)\ln(2))^{3/2}}\Big(\frac{1}{(1+\al)J-q}+\frac{3}{2\ln(2)((1+\al)J-q)^{3/2}}\Big)   2^{-(1+\al)J}\right).
\end{align*}
Sending $q\to 0$, we get, as $\lim_{q\to 0}\frac{\ln(1+Aq)}{q}=A$,
\begin{align*}
&\EE[\max\{\Lambda_J^2(\alpha), 1\}] \lqq 1+\frac{2}{(1+\al)^{3/2}\ln(2)^{5/2}}\Big(\frac{1}{(1+\al)J}+\frac{3}{2\ln(2)((1+\al)J)^{3/2}}\Big)   2^{-(1+\al)J}.
\end{align*}
Now, to find the infimum in $\alpha$ of $(1+\alpha) \EE[\max\{\Lambda_J^2(\alpha), 1\}]$, we have to minimize
\begin{align*}
1+\alpha+\frac{2}{(1+\al)^{1/2}\ln(2)^{5/2}}\Big(\frac{1}{(1+\al)J}+\frac{3}{2\ln(2)((1+\al)J)^{3/2}}\Big)   2^{-(1+\al)J},
\end{align*}
in $\alpha$, which takes on the smallest values $\approx 3.0059$ for $J=1$, $\approx 1.8826$ for $J=2$, and $\approx 1.4176$ for $J=3$, which we call $c_1^2, c_2^2, c_3^2$.
For $J\geq 4$, the expression is minimized when $\alpha\to 0$, resulting in the minimal value
\begin{align*}
1+\frac{2}{\ln(2)^{5/2}}\Big(\frac{1}{J}+\frac{3}{2\ln(2)J^{3/2}}\Big)   2^{-J}.
\end{align*}
which is bounded by 
\[c_4^2:=1+\frac{1}{8\ln(2)^{5/2}}\Big(\frac{1}{4}+\frac{3}{8\ln(2)}\Big)\approx 1.1627.\] 
Hence, $\inf_{\al>0} \sqrt{1+\alpha}\cdot  \Big(\EE[\max\{\Lambda_J^2(\alpha), 1\}]\Big)^\frac{1}{2}\lqq c_{\min\{J,4\}}$ for $J\gqq 1$, proving the statement.
\end{proof}

\medskip 
\subsection{\textbf{The random frequency of step by step deviations}}\label{ss:stepbystep}\hfill\\

\noindent In the sequel we improve the a.s.~quantification in Theorem \eqref{thm:asL} a)  
with the help of moment results on the overlap statistic studied in \cite{EH22}. 
For this purpose we show a general parametrized version of Theorem 3 in \cite{EH22}. Note that the case of independent events is not covered in \cite{EHS26}.

\medskip

\subsubsection{\textbf{Moment asymptotic of the Borel-Cantelli overlap count for independent events}}\label{sss:BC}\hfill\\ 

\begin{proof}\textbf{(of Proposition~\ref{prop: asymptotic} in Subsection~\ref{ss:BCmain1})} We first consider the case of $C_{N+1}< 1$.\\ 
\noindent \textbf{Claim: } For any $N\in \NN$ and $r< |\ln(C_{N+1})|$ we have 
\[
\EE[e^{p\oO_N}] \lqq 1 + \frac{C_{N+1} e^r}{1-C_{N+1} e^r}. 
\]
The proof of \cite[Theorem 3]{EH22} remains untouched, 
except of the replacement of $C_1$ by $C_{N+1}$. 
For $N\lqq M$, $N, M\in \NN$ we define $\oO_{N, M}:= \sum_{n= N+1}^M \ind_{E_n}$ and $\oO_N := \lim_{M\ra\infty} \oO_{N, M}$. In addition, set $G_k^{N, M}:=\{\oO_{N, M} = k\}$.  
For any $0 \lqq k \lqq M-N$ we have the Schuette-Nesbitt formula \cite{G79} 
\[
\sum_{k=0}^{M-N} a_k \PP(G_k^{N, M}) = \sum_{n=0}^{M-N} \qQ^{N, M}_n b_n,  
\quad \mbox{ where } \quad \qQ^{N, M}_n = \sum_{\substack{J\subseteq \{N+1, \dots, M\}\\|J|= n}} \PP \Big(\bigcap_{j\in J} E_j\Big),
\]
and $b_n = \sum_{j=0}^n \binom{n}{j} (-1)^{n-j} a_j$. For the choice $a_k = e^{rk}$, for some $r>0$, we obtain the values $b_n=(e^{r}-1)^n\leq e^{rn}$.

The independence of the events yields that 
\begin{align*}
\qQ^{N, M}_n =  \sum_{\substack{J\subseteq \{N+1, \dots, M\}\\|J|= n}} \prod_{j\in J} \PP(E_j) 
\lqq \Big(\sum_{i=N+1}^M \PP(E_i)\Big)^{n} \lqq C_{N+1}^n.
\end{align*}
Following the remaining steps of the proof of \cite[Theorem 3]{EH22}, sending $M\ra\infty$, 
we conclude for $r< |\ln(C_{N+1})|$ that  
\begin{align*}
\sum_{k=0}^\infty \PP(G_k^N) e^{rk} \lqq \frac{1}{1-C_{N+1} e^r}, \qquad \mbox{ where } G_k^N = \{\lim\limits_{M\ra\infty} \oO_{N, M} = k\}.
\end{align*}
The proof of the claim is complete. 

\noindent We now show the statement. 
For any $\delta>1$ and $m\gqq N+1$ such that $C_{N+1+m} < e^{-r}/\delta$ we write 
  \begin{align*}
 \EE[e^{r\oO_N}-1] 
 &\lqq \EE[e^{r (m+\oO_{N+m})}-1] 
 = e^{rm} \EE[e^{r \oO_{N+1+m}}-e^{-rm}] 
 \lqq \frac{e^{rm}}{1-C_{N+1+m} e^r} = \frac{e^{r (\ell-(N+1))}}{1-C_{\ell} e^r}, 
\end{align*}
where $\ell = N+1+ m$. 
If we define $\Lambda(r, \delta):= \inf\{\ell\gqq 1~\vert~ C_\ell < e^{-r}/\delta\}$ we obtain 
 \begin{align*}
\EE[e^{r\oO_N}-1] &\lqq e^{-r(N+1)} \frac{\delta}{\delta-1}e^{r \Lambda(r, \delta)}
\end{align*}
such that for any continuous, decreasing, invertible function $L: (0, \infty) \ra (0, \infty)$ satisfying $L(m) = C_m$ and $\delta >1$ we have 
\begin{align*}
\EE[e^{r\oO_N}-1] 
&\lqq \frac{\delta}{\delta-1} e^{-r (N+1)} e^{r L^{-1}(e^{-r}/\delta)}.
\end{align*}
The remaining inequalities having $\PP(\oO_N\gqq k)$ on the left hand side follow from applying Markov's inequality for the functions $x\mapsto e^{rx}-1$ and $x\mapsto e^{rx}$.
Finally, we observe that
\begin{align*}
&\inf_{r>0} \frac{\frac{\delta}{\delta-1}\exp\Big(r \big(L^{-1}(e^{-r}/\delta)-(N+1)\big)\Big)+1}{e^{rk}}\\
&\qquad \lqq \frac{\delta}{\delta-1}\exp\Big(R \big(L^{-1}(e^{-R}/\delta)-(N+1-k)\big)\Big)+e^{-Rk},
\end{align*}
where $R$ minimizes $r\mapsto \frac{\delta}{\delta-1}\exp\Big(r \big(L^{-1}(e^{-r}/\delta)-(N+1-k)\big)\Big)$, proving the last inequality.
\end{proof}

\begin{proof}\textbf{(of the Claim in Example~\ref{ex:indepGauss})}
This is seen by optimizing the exponents in \eqref{e:expMomentGauss}. 
By Markov's inequality and unifying bases we have for any $r_*>0$ 
\begin{align*}
\PP(\oO_N\gqq k) 
&\lqq \inf_{r>0}  2 \exp\Big(\frac{\sqrt{r^3 +r^2 \ln(2)}}{\sqrt{|\ln(b)|}} - (N+1)r - \ln(e^{rk}-1)\Big)\\
&\lqq \frac{2e^{r^*k}}{e^{r^*k}-1}  \inf_{r>r_*}  \exp\Big(\frac{\sqrt{r^3 +r^2 \ln(2)}}{\sqrt{|\ln(b)|}} - (N+1+k)r \Big).
\end{align*}
In the sequel, we minimize 
\begin{align*}
(r_*, \infty) \mapsto Q(r) := \frac{\sqrt{r^3 +r^2 \ln(2)}}{\sqrt{|\ln(b)|}} - (N+1+k)r. 
\end{align*}
Note that 
\begin{align*}
Q'(r) 
&=  \frac{3 r^{2} + 2 \ln(2) r}{2\sqrt{|\ln(b)|}r\sqrt{r +\ln(2)}} - (N+1+k) =  \frac{3 r + 2 \ln(2)}{2\sqrt{|\ln(b)|}\sqrt{r +\ln(2)}} - (N+1+k) \\
&= \frac{3 r + 2 \ln(2)- 2(N+1+k)\sqrt{|\ln(b)|} \sqrt{r +\ln(2)}}{2\sqrt{|\ln(b)|} \sqrt{r +\ln(2)}}.
\end{align*}
Hence $0 = r + \frac{2 \ln(2)}{ 3}- \frac{2(N+1+k)\sqrt{|\ln(b)|}}{ 3} \sqrt{r +\ln(2)}$ implies $Q'(r) = 0$. 
The optimizer is given for $A = \frac{2 \ln(2)}{3}$, $B = \frac{2(N+1+k)\sqrt{|\ln(b)|}}{3}$ and $C = \ln(2)$ by 

\begin{align*}
r_0 &= \frac{1}{2} \Big(B^2-2 A + B\sqrt{B^2 + 4 (C- A) }\Big)\\
 &= \frac{2(N+1+k)^2|\ln(b)|}{9}-\frac{2 \ln(2)}{3}  +\frac{(N+k+1)\sqrt{|\ln(b)|}}{3}\sqrt{\frac{4(N+1+k)^2|\ln(b)|}{9} + \frac{4\ln(2)}{3} }\\
 &=\frac{2}{9}(N+1+k)^2|\ln(b)|-\frac{2 \ln(2)}{3}  +\frac{2}{9}(N+k+1)\sqrt{|\ln(b)|}\sqrt{(N+1+k)^2|\ln(b)| + 3\ln(2) }.
\end{align*}
Since for large values of $N, k$, 
the optimizer is of order $r_0\approx r_1 := \frac{4}{9} (N+k+1)^2 |\ln(b)|$, we calculate 
\begin{align*}
Q(r_0)\lqq Q(r_1) 
&= \frac{r_1\sqrt{r_1 +\ln(2)}}{\sqrt{|\ln(b)|}} - (N+1+k)r_1 \\
&=  \frac{4}{9} (N+k+1)^2 |\ln(b)| \sqrt{\frac{4}{9} (N+k+1)^2 |\ln(b)| +\frac{\ln(2)}{|\ln(b)|}}-\frac{4}{9} (N+k+1)^3 |\ln(b)|.
\end{align*}
Using basic calculus, it is easy to see that the function $x\mapsto-\frac{9}{27}x^3+\frac{4}{9}\sqrt{\frac{4}{9}x^2+\frac{\ln(2)}{|\ln(b)|}}x^2$ is bounded by 
$$\Big(\frac{4}{9}C^2\sqrt{\frac{4}{9}C^2+1}-\frac{9}{27}C^3\Big)\Big(\frac{\ln(2)}{|\ln(b)|}\Big)^{3/2}\approx 0.3054\Big(\frac{\ln(2)}{|\ln(b)|}\Big)^{3/2}\lqq \frac{1}{3}\Big(\frac{\ln(2)}{|\ln(b)|}\Big)^{3/2},$$
where $C=\sqrt{\frac{27\sqrt{217}+39}{136}}\approx 1.792$.
This implies that
\begin{align*}
&-\frac{4}{9} (N+k+1)^2 |\ln(b)| \sqrt{\frac{4}{9} (N+k+1)^2 |\ln(b)| +\frac{\ln(2)}{|\ln(b)|}}-\frac{4}{9} (N+k+1)^3 |\ln(b)|\\
&\lqq -\frac{1}{9}(N+k+1)^3|\ln(b)|+\frac{\ln(2)^{3/2}}{3\sqrt{|\ln(b)|}}.
\end{align*}
Hence, we finally obtain taking $r^*=1$ and $k, N\gqq 1$, 
\begin{align*}
\PP(\oO_N\gqq k) \lqq \frac{e}{e-1}2^{1+\frac{1}{3}\sqrt{\frac{\ln(2)}{\ln(b)}}}b^{\frac{1}{3}(N+k+1)^3}.
\end{align*}
\end{proof}

\medskip

\subsubsection{\textbf{Proof of Theorem~\ref{thm:asL} c): }}\hfill\\

\begin{proof}\textbf{(of Theorem~\ref{thm:asL} c)}
For $b_\al =  2^{-\al}$ and 
$\dD_j := \{k\cdot 2^{-j}~\vert~k=0, \dots, 2^{j}\}$, $j\in \NN$, we have\\ 
\begin{align*}
&\PP(~\exists~d\in \dD_j\setminus \dD_{j-1} \mbox{ with } |Z_d|\gqq \sqrt{1+\al}\sqrt{2\ln(2)} \sqrt{j})\\
&\qquad \lqq \sum_{d\in \dD_j\setminus \dD_{j-1} } \PP(|Z_d|\gqq \sqrt{1+\al}\sqrt{2\ln(2)}\sqrt{j})\lqq 2^{j-1} \exp\Big(-\frac{(1+\al) 2\ln(2)}{2} j\Big) = \frac{1}{2}\cdot b_\al^{j}, 
\end{align*}
with the help of Chernov's bound. In addition, the dyadics $\dD_j  \subseteq  \dD_{j+1}$, $j\in \NN$, are monotonic and the family of events 
\[
A_j:= \{\mbox{there exists }d\in \dD_j\setminus \dD_{j-1} \mbox{ with } |Z_d|\gqq \sqrt{1+\al} \sqrt{j}\}, \qquad j\in \NN 
\]
is independent. Note that by construction $A_j = \{\|G_n\|_\infty > \sqrt{1+\al}\sqrt{2\ln(2)} \sqrt{j}\cdot 2^{-(j+1)/2}\}$. 
We define 
\[
\oO_J:=  \sum_{j=J+1}^\infty \ind\{\|G_j\|_\infty > \sqrt{1+\al}\cdot \sqrt{2\ln(2)} \cdot \sqrt{j}\cdot 2^{-j/2}\}.
\]
By Example \ref{ex: indepexp} we have for each fixed $J\in \NN$ and $0 < r < \al \ln(2)$
\begin{align*}
\EE[e^{r\oO_J}-1] \lqq  2 \exp\Big(-r (J+1)+ \frac{r^2}{(\al \ln(2))}\Big)
\end{align*}
and consequently, optimizing over $r>0$, we have 
\[
\PP(\oO_J\gqq k) \lqq 2 \exp\Big(-\frac{\al \ln(2)}{2} [k+J+1]^2\Big). 
\]
This shows \eqref{e:mdfLevy} and finishes the proof of Theorem~\ref{thm:asL} c). 
\end{proof}

\subsection{\textbf{The deterministic upper bound with random modulus of convergence}}\label{ss:detupperbound}\hfill

\begin{lem}\label{lem:lastentry}
Consider  a probability space $(\Omega, \aA, \PP)$ and a sequence $(E_n)_{n\in \NN_0}$ of independent events and $U_n := \bigcup_{m=n}^\infty E_m$, $n\in \NN$. 
Then the random variable $\jJ(\om) := \sup\{n-1~\vert~n\in \NN, \om \in U_n\}$ satisfies 
\[
\PP(\jJ = k) = \PP(E_k)  e^{\sum_{\ell=k+1}^\infty\ln(1-\PP(E_\ell))}, \qquad k\in \NN.   
\]
\end{lem}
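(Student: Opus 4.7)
\begin{proof2}
The plan is to translate the event $\{\jJ = k\}$ into an explicit intersection of independent events, then apply independence and rewrite the resulting infinite product in exponential form.

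First I would observe that the sequence $(U_n)_{n\in\NN}$ is non-increasing in $n$, since $U_{n+1}\subseteq U_n$. Consequently, $\om\in U_n$ for every $n\lqq \jJ(\om)$, and the event $\{\jJ = k\}$ is exactly $U_k\setminus U_{k+1}$. Next, I would unwind these two conditions in terms of the $E_m$: $\om\notin U_{k+1}$ means $\om\notin E_m$ for every $m\gqq k+1$, while $\om\in U_k$ combined with the previous condition forces $\om\in E_k$ (since the only index $m\gqq k$ not excluded by $\om\notin U_{k+1}$ is $m=k$). Thus
\[
\{\jJ = k\}
=
E_k \,\cap\, \bigcap_{\ell=k+1}^{\infty} E_\ell^c.
\]

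Then I would apply the independence of $(E_n)_{n\in\NN_0}$ (which carries over to the complements and to countable families by the standard limit argument $\PP(\bigcap_{\ell=k+1}^N E_\ell^c)\to\PP(\bigcap_{\ell=k+1}^\infty E_\ell^c)$ as $N\ra\infty$, via continuity of $\PP$ from above) to obtain
\[
\PP(\jJ = k)
=
\PP(E_k)\cdot\prod_{\ell=k+1}^{\infty}\bigl(1-\PP(E_\ell)\bigr).
\]
Finally, I would rewrite the infinite product using $\prod_\ell x_\ell = \exp\bigl(\sum_\ell \ln x_\ell\bigr)$ (valid with values in $[0,\infty]$ under the usual convention $\ln 0 = -\infty$), which yields the claimed formula.

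The only mild subtlety is that $\jJ$ must in fact be $\NN$-valued, so the infinite product (equivalently, the series $\sum_{\ell}\ln(1-\PP(E_\ell))$) has to converge to a strictly positive limit for $\PP(\jJ<\infty)$ to have full mass; by the first Borel--Cantelli lemma this is guaranteed whenever $\sum_\ell \PP(E_\ell)<\infty$, which will be the setting of the application in Theorem~\ref{thm:asL} b). There is no real obstacle beyond bookkeeping: the whole statement is a direct computation once the event $\{\jJ=k\}$ is identified as the single ``last hit'' event $E_k\cap\bigcap_{\ell>k}E_\ell^c$.
\end{proof2}
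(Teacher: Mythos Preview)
Your proposal is correct and follows essentially the same approach as the paper: both identify $\{\jJ=k\}=U_k\setminus U_{k+1}=E_k\cap\bigcap_{\ell>k}E_\ell^c$, apply independence to factor the probability, and rewrite the infinite product in exponential form. Your version adds a bit more care (continuity of $\PP$ from above for the infinite intersection, and the remark on summability via Borel--Cantelli), which the paper omits, but the argument is the same.
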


\begin{proof} Note that $(U_n)_{n\in \NN}$ is a nested sequence by construction. 
The independence of $(E_n)_{n\in \NN}$ implies
\begin{align*}
&\PP(\jJ = k) 
= \PP\Big(U_k \cap \bigcap_{\ell = k+1}^\infty U_\ell^c\Big) = \PP\Big((E_k \cup U_{k+1}) \cap \bigcap_{\ell = k+1}^\infty U_\ell^c\Big) \\
&= \PP\Big(E_k  \cap \bigcap_{\ell = k+1}^\infty U_\ell^c\Big)= \PP\Big(E_k  \cap \bigcap_{\ell = k+1}^\infty \Big(\bigcup_{r =\ell}^\infty E_r\Big)^c\Big)= \PP\Big(E_k  \cap \bigcap_{\ell = k+1}^\infty \bigcap_{r =\ell}^\infty E_r^c\Big)= \PP\Big(E_k  \cap \bigcap_{\ell = k+1}^\infty E_\ell^c\Big)\\
&= \PP(E_k) \prod_{\ell=k+1}^\infty (1-\PP(E_\ell))= \PP(E_k) \prod_{\ell=k+1}^\infty e^{\ln(1-\PP(E_\ell))}= \PP(E_k)  \exp\Big(\sum_{\ell=k+1}^\infty\ln(1-\PP(E_\ell))\Big).
\end{align*}
\end{proof}

\medskip 
\begin{proof}\textbf{(of Theorem~\ref{thm:asL} b)} We apply Lemma~\ref{lem:lastentry} for some i.i.d.~family 
$(Z_{2^j +k_j})_{j\in \NN}$ with $Z_{2^j +k_j} \sim N(0,1)$, where $j\in \NN_0$ and $k_j\in \{0, \dots, 2^{j}-1\}$ 
given in Lemma~\ref{lem:fastsicherextremparametrisiert}, where 
\begin{align*}
E_j := \Big\{|Z_{2^j +k_j}| \gqq \sqrt{1+\al} \sqrt{2 \ln(2^j + k_j)}\Big\}. 
\end{align*}
Then by the Börjesson-Sundberg bound \cite{BS79} (already appearing in \cite{Woz65} and is actually a bound on a normal variable's Mill's ratio \cite{Mills26}),
\begin{align*}
\PP(Z_{2^j +k_j}>t)<\frac{e^{-\frac{t^2}{2}}}{\sqrt{2\pi}\cdot t},\quad \text{for }t>0,
\end{align*}
which implies 
\begin{align}\label{eq:BSW-bound}
\PP(|Z_{2^j +k_j}|>t)<e^{-\frac{t^2}{2}}\min\Big\{\frac{2}{\sqrt{2\pi}\cdot t},1\Big\},\quad t>0.
\end{align} 
Inserting for $t$ yields
\begin{align*}
\PP(E_j) &= \PP \Big(|Z_{2^j +k_j}|\gqq \sqrt{2(1+\al) \ln(2^j +k_j)}\Big)\\
&\lqq  \exp(-(1+\al) \ln(2^j +k_j))= (2^j +k_j)^{-(1+\al)}\lqq 2^{-j(1+\al)}.
\end{align*}
Then Lemma \ref{lem:lastentry} implies the upper bound of the statement. 
The lower bound follows analoguously. See also Appendix~\ref{a:fastsicherextrem}. 
\end{proof}

\medskip 

\subsection{\textbf{Almost sure convergence with exponential mean deviation frequency}}\label{ss:exmdf}\hfill\\

\begin{proof}\textbf{(of Theorem~\ref{thm:asL} d)}
For $\e>0$ recall  
\[
\oO_\e(\omega) = \sum_{J=0}^\infty \ind\{\|L^J(\omega)-W(\omega)\|_\infty >\e\}, \qquad 
\m_\e = \sup\{n-1~|~n\in \NN, \|L^J(\omega)-W(\omega)\|_\infty >\e\}.
\]
Then, by Corollary~\ref{cor:L2} and Markov's inequality, we have for all $J\gqq 1$
\begin{equation}\label{e:Levywahrsch}
\PP(\|L^J-W\|_\infty >\e) \lqq \e^{-1}\cdot C_a\cdot c_1 \cdot  \sqrt{J+1}\cdot 2^{-\frac{J}{2}}.
\end{equation}
Hence by \cite[Example~2]{EHS26}
we have for all $0 \lqq p < \ln(2)/2$ that 
\begin{align*}
\EE[e^{p\oO_\e}]\lqq \EE[e^{p\m_\e}]&\lqq 1+ \e^{-1} (\sqrt{2}-1)\cdot C_a\cdot c_1  \sum_{n=1}^\infty e^{pn} \sum_{m=n}^\infty \sqrt{m+1}\cdot 2^{-\frac{m}{2}}\\
&=1+ \e^{-1} \cdot (2-\sqrt{2})\cdot C_a\cdot c_1 \cdot \sum_{n=1}^\infty e^{pn} \sum_{m=n+1}^\infty \sqrt{m}\cdot 2^{-\frac{m}{2}}\\
&\lqq 1+ \e^{-1} \cdot (2-\sqrt{2})\cdot C_a\cdot c_1 \cdot \sum_{n=1}^\infty e^{pn} \int_{n}^\infty\sqrt{x}\cdot 2^{-\frac{x}{2}}dx\\
&=1+ \e^{-1} \cdot \frac{4(\sqrt{2}-1)}{\ln(2)^{3/2}}\cdot C_a\cdot c_1\cdot  \sum_{n=1}^\infty e^{pn} \int_{\frac{\ln(2)n}{2}}^\infty\sqrt{y}\cdot e^{-y}dy.
\end{align*}
Corollary \ref{cor:Gamma} for $a=\frac{3}{2}$ then yields
\begin{align*}
\EE[e^{p\oO_\e}]&\lqq \EE[e^{p\m_\e}]\lqq1+ \e^{-1} \cdot \frac{4(\sqrt{2}-1)}{\sqrt{2}\ln(2)}\Big(1+\frac{1}{\ln(2)}\Big)\cdot C_a\cdot c_1 \cdot  \sum_{n=1}^\infty\sqrt{n}e^{-n(\frac{\ln(2)}{2}-p)}\\
&=1+ \e^{-1} \cdot \frac{4(\sqrt{2}-1)}{\sqrt{2}\ln(2)}\Big(1+\frac{1}{\ln(2)}\Big)\cdot C_a\cdot c_1 \cdot \left(\sum_{1\lqq n\lqq M}\sqrt{n}e^{-n(\frac{\ln(2)}{2}-p)}+\sum_{n>M}^\infty\sqrt{n}e^{-n(\frac{\ln(2)}{2}-p)}\right),
\end{align*}
where $M=\frac{1}{2}\frac{1}{\frac{\ln(2)}{2}-p}$ is chosen such that $x\mapsto\sqrt{x}e^{-x(\frac{\ln(2)}{2}-p)}$ is decreasing for $x\gqq M$. We estimate further,
\begin{align*}
\sum_{1\lqq n\lqq M}\sqrt{n}e^{-n(\frac{\ln(2)}{2}-p)}\lqq M^{3/2}.
\end{align*}
For the remaining sum, we use again the integral criterion, which yields
\begin{equation}\label{e:Gammamoments}
\begin{split}
&\EE[e^{p\m_\e}]\lqq 1+\e^{-1} \cdot \frac{4(\sqrt{2}-1)}{\sqrt{2}\ln(2)}\Big(1+\frac{1}{\ln(2)}\Big)\cdot C_a\cdot c_1 \cdot\left(M^{3/2}+\int_{M}^\infty\sqrt{x}e^{-x(\frac{\ln(2)}{2}-p)}dx\right)\\
&=1+\e^{-1} \cdot \frac{4(\sqrt{2}-1)\cdot C_a\cdot c_1}{\sqrt{2}\ln(2)}\Big(1+\frac{1}{\ln(2)}\Big) \cdot\left(\frac{1}{2^{3/2}(\frac{\ln(2)}{2}-p)^{3/2}}+\frac{1}{(\frac{\ln(2)}{2}-p)^{3/2}}\int_{M(\frac{\ln(2)}{2}-p)}^\infty\sqrt{y}e^{-y}dy\right)\\
&\lqq 1+\e^{-1} \cdot \frac{4(\sqrt{2}-1)\cdot C_a\cdot c_1}{\sqrt{2}\ln(2)}\Big(1+\frac{1}{\ln(2)}\Big)\cdot\left(\frac{1}{2^{3/2}(\frac{\ln(2)}{2}-p)^{3/2}}+\frac{1}{(\frac{\ln(2)}{2}-p)^{3/2}}\Gamma(3/2)\right)\\
&=1+\e^{-1} \cdot \frac{4(\sqrt{2}-1)\cdot C_a\cdot c_1}{\sqrt{2}\ln(2)}\Big(1+\frac{1}{\ln(2)}\Big)\cdot\Big(\frac{1}{2\sqrt{2}}+\Gamma(3/2)\Big)\Big(\frac{\ln(2)}{2} - p\Big)^{-3/2}\\
&=1+\e^{-1} \cdot \frac{(2-\sqrt{2})\cdot C_a\cdot c_1}{\ln(2)}\Big(1+\frac{1}{\ln(2)}\Big)\Big(\frac{1}{\sqrt{2}}+\sqrt{\pi}\Big) \cdot\Big(\frac{\ln(2)}{2} - p\Big)^{-3/2}\\
&=:1+\e^{-1} \cdot C\Big(\frac{\ln(2)}{2} - p\Big)^{-3/2}.
\end{split}
\end{equation}
This shows \eqref{e:LevOverlap}. Consequently, Markov's inequality yields 
\[
\PP(\oO_\e \gqq k)\lqq \PP(\m_\e \gqq k)\lqq \inf_{p\in [0, \frac{\ln(2)}{2})} e^{-pk} \bigg(1+ \e^{-1} \cdot  C\cdot \Big(\frac{\ln(2)}{2} - p\bigg)^{-3/2}\Big), \qquad k\in \NN,
\]
which shows 
\begin{align}
\PP(\oO_\e \gqq k)\lqq \PP(\m_\e \gqq k)
&\lqq \inf_{p\in [0, \frac{\ln(2)}{2})} e^{-pk} \bigg(1+ \frac{C}{\e}\cdot \Big(\frac{\ln(2)}{2} - p\Big)^{-3/2}\bigg).\label{eq:minimizeplevy0}
 \end{align}
Taking the derivative and finding the zero, the exact minimizer $p^*(k):=\min\Big\{\frac{\ln(2)}{2}-{r(k)^2}, 0\Big\}$ can be found by $r(k)$, which is the smallest positive zero of the polynomial $x^5+\frac{C}{\e}x^2-\frac{3C}{2\e k}$. 
Inserting, we see that $\frac{1}{\sqrt{k}}\lqq r(k)\lqq \sqrt{\frac{3}{2k}}$ as long as $1\lqq \Big(\frac{C}{2\e}\Big)^{2/3} k$. 
We see that $\frac{\ln(2)}{2}- \frac{1}{k}$ is near to the optimal value $p^*(k)$ for $k\gqq 3$. Using this value in \eqref{eq:minimizeplevy0}
we obtain  
 \begin{align}
\PP(\oO_\e \gqq k)\lqq \PP(\m_\e \gqq k)\lqq e\left(1+\frac{C}{\e} \cdot k^{3/2}\right)e^{-\frac{\ln(2)}{2}k}, \qquad k \gqq 3\label{e:Gammamomentsexactmini}
\end{align}
follows. This shows \eqref{eq:minimizeplevy} and finishes the proof of Theorem~\ref{thm:asL} d). 
\end{proof}

\bigskip 
\subsection{\textbf{Almost sure convergence with close to optimal a.s. rate}}\label{a:fsoptimaleRate}
\begin{proof}[\textbf{(of Theorem~\ref{thm:asL} e)}] Going back to \eqref{e:Levywahrsch} we have 
for all $\theta>0$ and $\delta_J := 2^{-\frac{J}{2}}\cdot(J+1)^{\frac{3}{2}}\cdot  \ln(J+1)^{1+\theta}$ 
\begin{equation}\label{e:Levywahrsch2}
\PP(\|L^J-W\|_\infty >\delta_J) \lqq 
\delta_J^{-1}\cdot C_a\cdot c_1 \cdot  \sqrt{J+1}\cdot 2^{-\frac{J}{2}} = \frac{C_a\cdot c_1}{(J+1) \ln(J+1)^{1+\theta}}, \qquad J\in\NN. 
\end{equation}
Summation over the preceding inequality yields by integral comparison 
for the deviation frequency $\oO_\delta:= \sum_{J=1}^\infty \ind\{\|L^J-W\|_\infty >\delta_J\}$ 
that with the help of the usual first Borel-Cantelli lemma we have 
\begin{align*}
\EE[\oO_\delta] &= \sum_{J=1}^\infty  \PP(\|L^J-W\|_\infty >\delta_J) 
\lqq C_a\cdot c_1\cdot \sum_{J=1}^\infty \frac{1}{(J+1) \ln(J+1)^{1+\theta}}\\ 
&\lqq C_a\cdot c_1\cdot \Big(\frac{1}{2\ln(2)^{1+\theta}}+\int_{2}^\infty \frac{1}{x \ln(x)^{1+\theta}} \Big) = C_a\cdot c_1\cdot \Big(\frac{1}{2\ln(2)^{1+\theta}} + \frac{1}{\theta\ln(2)^\theta}\Big)<\infty. 
\end{align*}
In particular, $\oO_\delta<\infty$, $\PP$-a.s. such that 
\[
\limsup_{J\ra\infty} \|L^J-W\|_\infty \cdot \delta_J^{-1} \lqq 1, \qquad \PP\mbox{-a.s.}
\]
Markov's inequality then yields for all $k\in \NN$ 
\[
\PP(\#\{J\in \NN~\vert~ \|L^J-W\|_\infty>\delta_J\} \gqq k) = \PP(\oO_\delta\gqq k) \lqq k^{-1}\cdot \frac{C_a\cdot c_1}{\ln(2)^\theta}\cdot \Big(\frac{1}{2\ln(2)} + \frac{1}{\theta}\Big). 
\] 
\end{proof}

\bigskip 

\section{\textbf{Proof of: Deviation frequencies from continuity and H\"older continuity}}

\subsection{\textbf{Doob's ad hoc proof of continuity}}\label{ss:Doob}\hfill\\ 

  \begin{proof}\textbf{(of Theorem~\ref{thm:Doob})}
  Formula (3.6) on p.~577 of \cite{Do84} reads for all $\e>0$ and $n\in \NN$ 
  \begin{align}\label{e:Doobadhoc}
  \PP(E_n(\e)) \lqq \frac{8}{\e} \cdot \sqrt{n} \cdot e^{-\frac{\e^2}{4} n}, 
  \qquad \mbox{ for all }\qquad E_n(\e)= \Bigg\{\sup_{\substack{r,s\in\QQ\cap [0,1]\\ |s-r|\lqq \frac{1}{n}}} |X_s-X_r| \gqq 2\e\Bigg\}.
  \end{align}
  The remaining part is treated almost the same way as in \eqref{e:Gammamoments} and \eqref{e:Gammamomentsexactmini}. 
  For $\oO_\e = \sum_{n=1}^\infty \ind(E_n)$ and $\m_\e(\omega)= \sup\{n-1~|~n\in\NN, \omega\in E_n\}$, 
    and all $p \in [0, \frac{\e^2}{4})$, we obtain the constant 
  $C_\e:=16(1+\tfrac{\sqrt{\pi}}{\e^2})(\tfrac{1}{\sqrt{2}}+\sqrt{\pi})e^{\frac{\e^2}{4}}(e^{\frac{\e^2}{4}}-1)$, such that
  \begin{align*}
  \EE[e^{p \oO_\e}]\lqq \EE[e^{p \m_\e}] & \lqq 1 + \frac{ C_\e}{\e^3} \Big(\frac{\e^2}{4}-p\Big)^{-3/2}
  \end{align*}
such that 
  \begin{align}\label{e:Dooboptimization}
  \PP(\oO_\e \gqq k)\lqq   \PP(\m_\e \gqq k)
  &\lqq \inf_{p \in [0, \frac{\e^2}{4})} e^{-kp} \Big(1 + \frac{C_\e}{\e^3} \Big(\frac{\e^2}{4}-p\Big)^{-3/2}\Big).
  \end{align}
For $x = \sqrt{\frac{ \e^2}{4} - p}$, the minimizer is given as the smallest positive zero $x_+$ of $x^5 + \frac{C_\e}{\e^3} x^2 -  \frac{3C_\e}{2\e^3 k}$, which can be estimated  by
$$\sqrt{\frac{1}{k}}\lqq x_+\lqq \sqrt{\frac{3}{2k}},$$
as long as $1\lqq \Big(\frac{C_\e}{2\e^3}\Big)^{2/3}k\Leftrightarrow k\gqq \Big(\frac{2\e^3}{C_\e}\Big)^{2/3}$. Since it is readily checked that $$\Big(\frac{2\e^3}{C_\e}\Big)^{2/3}=\frac{2^{2/3}\cdot\e^2}{\Big(16(\frac{1}{\sqrt{2}}+\sqrt{\pi})(1+\frac{\sqrt{\pi}}{\e^2})(e^{\frac{\e^2}{4}}-1)\Big)^{2/3}}e^{-\frac{\e^2}{6}}<1,$$ the bounds hold for all $k\gqq 1$.
Hence $p_* = p_*(k) = \max\{\frac{\e^2}{4} - x_+^2,0\Big\} \lqq \max\{\frac{\e^2}{4} - \frac{1}{k},0\}$ and $p_*(k)\gqq \max\{\frac{\e^2}{4} -\frac{3}{2k},0\}$. Plugging the upper bound of $p_*$ into \eqref{e:Dooboptimization} we obtain 
\begin{align*}
\PP(\oO_\e \gqq k)\lqq \PP(\m_\e \gqq k)\lqq e\left(1+\frac{C_\varepsilon}{\e^3} \cdot k^{\frac{3}{2}}\right)e^{-\frac{\e^2}{4}k}, \qquad k> \frac{4}{\e^2}  .
\end{align*}
This shows \eqref{e:Doob} and finishes the proof of the first part of Theorem~\ref{thm:Doob}. 

\noindent 
For the second statement we use \eqref{e:Doobadhoc} for $\e_n,$
\begin{align*}
  \PP(E_n(\e_n)) \lqq \frac{8}{\e_n} \cdot \sqrt{n} \cdot e^{-\frac{\e_n^2}{4} n}.
\end{align*}
Setting for some $\theta>0$ ,
\begin{align*}
\frac{\e_n^2}{4} n = \theta \ln(n+1)\qquad \Leftrightarrow \qquad \e_n = \sqrt{4 \theta \frac{\ln(n+1)}{n}}, 
\end{align*}
we have 
\begin{align*}
\frac{8}{\sqrt{4 \theta \frac{\ln(n+1)}{n}}} \cdot \sqrt{n} \cdot e^{-\frac{4 \theta \frac{\ln(n+1)}{n}}{4} n} 
&
\lqq \frac{4}{\sqrt{\theta \ln(n+1)}} \frac{1}{(n+1)^{\theta-1}}\lqq \frac{4}{\sqrt{\theta \ln(2)}} \frac{1}{(n+1)^{\theta-1}},
\end{align*}
which is summable for $\theta>2$. Therefore for $\oO_{(\e_n)_{n\in \NN}} := \sum_{n=1}^\infty \ind(E_n(\e_n))$ the usual first Borel-Cantelli lemma (analogously to Appendix \ref{a:fsoptimaleRate}) yields $\EE[\oO_{(\e_n)_{n\in \NN}}]\lqq \frac{4}{\sqrt{\theta \ln(2)}} \zeta(\theta-1)$ and in particular $\oO_{(\e_n)_{n\in \NN}}< \infty$, $\PP$-a.s. which shows \eqref{e:Doob2}. 
Markov's inequality yields 
\[
\PP(\oO_{(\e_n)_{n\in \NN}}\gqq k)\lqq k^{-1}\cdot \frac{4}{\sqrt{\theta \ln(2)}} \zeta(\theta-1).
\]
By the application of Example \ref{ex:Riemann}, we obtain for $-1<p<\theta-3$,
\[
\PP(\m_{(\e_n)_{n\in \NN}}\gqq k)\lqq k^{-p+1}\cdot \left(\frac{4}{\sqrt{\theta \ln(2)}}(\theta-1) \zeta(\theta-1)+\ind(\{p<0\})\right).
\]

This shows \eqref{e:Doob3} and \eqref{e:Doob4} and finishes the proof. 
\end{proof}

\medskip 

\subsection{\textbf{Locality and deviations from H\"older continuous paths}}

\subsubsection{\textbf{The Kolmogorov-Chentsov continuity theorem for stochastic processes}}\label{ss:KC}\hfill\\ 

\begin{proof}\textbf{(of Theorem~\ref{thm:KC})}
We follow the lines of the proof of \cite[2.8 Theorem]{KS98}. 
For convenience set $T = 1$. After establishing the continuity in probability 
\begin{align*}
\PP(\|X_t - X_s\|\gqq \e) \lqq C \e^{-\alpha} |t-s|^{1+\beta},
\end{align*}
on p.~54, the following discretization 
\[
t = \frac{\ell}{2^n}, \qquad s  = \frac{\ell-1}{2^n}, \qquad \mbox{ and }\qquad \e = 2^{-\gamma n}, \qquad n\in \NN_0, \quad \ell \in \{1, \dots, 2^{n}\}, 
\]
yields 
\begin{align*}
\PP(|X_{\ell/2^n}-X_{(\ell-1)/2^n}\|\gqq 2^{-\gamma n}) \lqq C 2^{-n(1+\beta -\alpha\gamma)}, 
\end{align*}
and a simple union bound estimate in \cite[$(2.9)$ on p.~54]{KS98} reads for the same constant $C$ as follows,
\begin{equation}
\PP\Big(\max_{1\lqq \ell\lqq 2^n} \|X_{\ell/2^n}-X_{(\ell-1)/2^n}\|\gqq 2^{-\gamma n}\Big) 
\lqq C 2^{-n(\beta - \alpha \gamma)}. \label{e:decay1}
\end{equation}
By the classical Borel-Cantelli lemma there is an event $\Omega^*\in \aA$ with $\PP(\Omega^*)= 1$ and a random variable $n^*:\Omega \ra\NN$ such that  
\begin{align}
\max_{1\lqq \ell\lqq 2^n} \|X_{\ell/2^n}-X_{(\ell-1)/2^n}\|< 2^{-\gamma n}\qquad \mbox{ for all }n \gqq n^*. 
\end{align}
The remainder of the proof of \cite[2.8 Theorem]{KS98} remains untouched and shows (1). 
To prove Theorem~\ref{thm:KC} (2), it remains to establish the stated moment estimates of 
\[
\m(\omega) = \sup\{n\in \NN_0~|~\omega \in A_n\},\quad A_n := \Big\{\max_{1\lqq \ell\lqq 2^n} \|X_{\ell/2^n}-X_{(\ell-1)/2^n}\|\gqq 2^{-\gamma n}\Big\},
\]
the total number and the last index of exceptions. Such moment estimates of $\m$ and $\oO= \sum_{n=0}^\infty \ind(A_n)$ have been studied in detail in \cite{EHS26}. We consider 
\[
\Big\{\max_{1\lqq \ell\lqq 2^n} \|X_{\ell/2^n}-X_{(\ell-1)/2^n}\| > 2^{-\gamma n}\Big\}, \qquad n\in \NN_0.  
\]
Therefore, we can apply the moment estimates on $\m$ in \cite{EHS26}. By Lemma~1 in \cite{EHS26} we have 
\[
\EE[\sS(\oO)]\lqq \EE[\sS(\m)] \lqq \sum_{n=0}^\infty a_n \sum_{m=n}^\infty \PP(A_m),  
\]
for any nonnegative sequence $(a_n)_{n\in \NN}$ and $\sS(N) = \sum_{n=0}^{N-1} a_n$. 
Now, for $a_n = e^{p n}$ we have $\sS(N) = \sum_{n=0}^{N-1} e^{p n} =  \frac{e^{pN} -1}{e^p-1}$. 
Therefore for $0 < p < (\beta -\alpha\gamma)\ln(2)$ we have 
\begin{align*}
\sum_{m=n}^\infty \PP(A_m) 
&\lqq C \sum_{m=n}^\infty 2^{-m(\beta - \alpha \gamma)} = \frac{C 2^{-n(\beta - \alpha \gamma)}}{1-2^{-(\beta - \alpha \gamma)}}, 
\end{align*}
such that 
\begin{align*}
\EE[\sS(\oO)] \lqq \EE[\sS(\m)] \lqq \frac{C}{1-2^{-(\beta - \alpha \gamma)}}\sum_{n=0}^\infty e^{n(p-(\beta-\alpha\gamma)\ln(2))} 
= \frac{C}{1-2^{-(\beta - \alpha \gamma)}}\frac{1}{1-e^{p-(\beta-\alpha\gamma)\ln(2)}}
\end{align*}
and finally 
\begin{align*}
\EE[e^{p \m}] 
& = \EE[\sS(\m)](e^p-1)+1\\
&\lqq  \frac{C}{1-2^{-(\beta - \alpha \gamma)}}\frac{e^p-1}{1-e^{(p-(\beta-\alpha\gamma)\ln(2))}}+1\\
&\lqq  \frac{C}{1-2^{-(\beta - \alpha \gamma)}}\frac{e^{(\beta -\alpha\gamma)\ln(2)}-1}{1-e^{(p-(\beta-\alpha\gamma)\ln(2))}}+1\\
&= \Big(\frac{M}{1-e^p b} +1\Big).
´
\end{align*}
The preceding inequality, together with Lemma~\ref{lem:optimal} from Appendix \ref{a:optimal} below for 
$M= \frac{C(2^{(\beta -\alpha\gamma)}-1)}{1-2^{-(\beta - \alpha \gamma)}}$ 
and $b = 2^{-(\beta -\alpha\gamma)}$ yields \eqref{e:Chentsovbound2} and finishes the proof of Theorem~\ref{thm:KC} 2). 
\end{proof}

\medskip

\subsubsection{\textbf{The Kolmogorov-Totoki continuity theorem for random fields}}\label{ss:KT}\hfill\\ 

\noindent We cite Lemma 4.2 in \cite{Ku04} and recall the notation $\Delta_n^\gamma(f) = 2^{n\gamma} \Delta_n(f)$, where $$\Delta_n(f) = \max_{\substack{x, y\in \lL_n\cap \dD\\ |x-y| = 2^{-n}}} \|f(x)-f(y)\|, \qquad n\in \NN_0.$$
\begin{lem}\label{lem:Kunita}
For any $f: \lL \cap \dD \ra B$ and any $\beta>0$ we have the inequality 
\begin{align*}
\|f(x) - f(y)\|\lqq 2^{d+1} \Big(\sum_{n=0}^\infty \Delta_n^\beta(f)\Big) |x-y|^\beta, \qquad x, y \in \lL\cap \dD.  
\end{align*}
\end{lem}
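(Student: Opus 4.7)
The plan is to apply the classical dyadic chaining argument. Given $x,y\in \lL\cap \dD$ with $x\neq y$, first pick $n_0\in\NN_0$ such that $2^{-n_0}\lqq |x-y|<2\cdot 2^{-n_0}$, which ensures $2^{-n_0\beta}\lqq |x-y|^\beta$. Since $\lL=\bigcup_n \lL_n$, there is some $N\gqq n_0$ with $x,y\in \lL_N\cap\dD$, and it suffices to bound the left-hand side by a quantity that is independent of $N$ (the telescoping chain will only run up to level $N$).

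For each $n\in\{n_0,\dots,N\}$ I would choose a dyadic approximant $x_n\in \lL_n\cap \dD$ of $x$, for example by coordinate-wise truncation of the binary expansion of $x$, normalized so that $x_N=x$; define $y_n$ analogously. Two consecutive approximants $x_n$ and $x_{n+1}$ both lie in $\lL_{n+1}$, sit in a common cell of $\lL_n$, and can hence be connected by a path of at most $d$ edges of $\lL_{n+1}$, each of length exactly $2^{-(n+1)}$. The triangle inequality combined with the definition of $\Delta_{n+1}(f)$ yields
\begin{equation*}
\|f(x_{n+1})-f(x_n)\|\lqq d\cdot\Delta_{n+1}(f)=d\cdot 2^{-(n+1)\beta}\Delta_{n+1}^\beta(f),
\end{equation*}
and symmetrically for $y$. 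At the coarsest level, $|x_{n_0}-y_{n_0}|$ is of order $2^{-n_0}$, so $x_{n_0}$ and $y_{n_0}$ are linked by a short $\lL_{n_0}$-path of at most $2d$ edges, contributing at most $2d\cdot 2^{-n_0\beta}\Delta_{n_0}^\beta(f)$.

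Telescoping the three contributions, using $2^{-n\beta}\lqq 2^{-n_0\beta}$ for $n\gqq n_0$ to factor out the scale, and applying $2^{-n_0\beta}\lqq |x-y|^\beta$ would lead to an estimate of the form
\begin{equation*}
\|f(x)-f(y)\|\lqq 4d\cdot|x-y|^\beta\sum_{n=0}^\infty \Delta_n^\beta(f);
\end{equation*}
the stated constant $2^{d+1}$ then follows from the elementary bound $4d\lqq 2^{d+1}$, valid for all $d\gqq 1$.

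The step I expect to be the genuine obstacle is the geometric bookkeeping near the boundary of $\dD$: the intermediate lattice approximants and the routing path from $x_{n_0}$ to $y_{n_0}$ must all lie inside $\dD$ so that their pairwise differences are actually controlled by $\Delta_n(f)$. For points deep inside $\dD$ this is automatic, but for $x,y$ close to $\partial\dD$ one has to argue, using the openness and connectedness of $\dD$, that the approximants can be selected so as never to leave the domain (or else work with a slightly enlarged control functional). Beyond this geometric bookkeeping, the rest of the argument is a careful combinatorial accounting of the chain lengths at each dyadic level, which is routine.
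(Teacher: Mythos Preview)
The paper does not prove this lemma; it is simply quoted as Lemma~4.2 of \cite{Ku04} and used as a black box in the proof of Theorem~\ref{thm:KolmoToki}. There is therefore nothing in the paper to compare your argument against.

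Your dyadic chaining argument is the standard route to this inequality and is correct in outline: telescoping along approximants $x_n\in\lL_n$, bounding each step $\|f(x_{n+1})-f(x_n)\|$ by $d\,\Delta_{n+1}(f)$ via a coordinate-wise lattice path, and connecting $x_{n_0}$ to $y_{n_0}$ at the coarse scale is exactly how Kunita's proof runs. Two points deserve care. First, the edge count at the coarse level is slightly more delicate than your ``at most $2d$ edges'' suggests: with plain coordinate-wise truncation one only gets $|x_{n_0,i}-y_{n_0,i}|<4\cdot 2^{-n_0}$ per coordinate, so up to $3d$ edges may be needed, and arriving at the exact constant $2^{d+1}$ (rather than some $Cd$) requires either a sharper choice of $n_0$ or a different organization of the coarse-level term. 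Your observation $4d\lqq 2^{d+1}$ is correct, but reaching $4d$ in the first place needs that extra bookkeeping. Second, the boundary issue you flag is genuine and is handled in \cite{Ku04} essentially along the lines you anticipate.
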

\noindent Hence, any $f: \lL \cap \dD\ra B$ such that $\sum_{n=0}^\infty \Delta_n^\beta(f)<\infty$ is globally $\beta$-H\"older continuous on $\lL \cap \dD$. Due to the density of $\lL\cap \dD$ in $\dD$ and the compactness of $\dD$ there is a unique uniformly continuous and $\beta$-H\"older continuous extension $\ti f:\dD \ra B$ of any such $f$ to $\bar \dD$, i.e. $\ti f(x) = f(x)$ for $x\in \lL\cap \dD$.  

\begin{proof}\textbf{(of Theorem~\ref{thm:KolmoToki})}
We only consider the case $\gamma\gqq 1$. Note that 
\begin{align*}
\big(\Delta_n^\gamma(X(\cdot))\big)^\alpha 
&\lqq \Big(\sup_{\substack{x, y \in \lL\cap \dD\\ |x-y| = \frac{1}{2^n}}} \|X(x)-X(y)\|2^{n\gamma}\Big)^\alpha 
\lqq \Big(\sum_{\substack{x', y' \in \lL\cap \dD\\ |x'-y'| = \frac{1}{2^n}}} \|X(x')-X(y')\|2^{n\gamma}\Big)^\alpha.
\end{align*}
The number of summands in the preceding sum
is bounded by $\mbox{vol}(\dD) / \mbox{vol}([0,1]^d) \cdot 2^{d(n+1)}$. 
Hence by \eqref{e:KolmoToki} we have 
\begin{align}
\EE\Big[\Delta_n^\gamma(X(\cdot))^\alpha\Big] 
&\lqq \mbox{vol}(\dD)\cdot 2^{(n+1) d}\cdot 2^{n\alpha\gamma} 
\sup_{\substack{x, y \in \lL\cap \dD\\ |x-y| = \frac{1}{2^n}}} \EE[\| X(x)-X(y)\|^\alpha] \nonumber\\
&\lqq \mbox{vol}(\dD)\cdot 2^{(n+1) d}\cdot 2^{n\alpha\gamma}\cdot 2^{-n(d+\beta)}
= 2^d \cdot \mbox{vol}(\dD) \cdot 2^{n (\alpha\gamma -\beta)}.\label{e:Totokimoment}
\end{align}
Finally, combining $\gamma \in(0, \frac{\beta}{\alpha})$ with Lemma~\ref{lem:Kunita} we have 
\begin{align*}
\EE\Big[\big(\sum_{n=0}^\infty \Delta_n^\gamma(X(\cdot)\big)^\alpha]^\frac{1}{\alpha} 
&\lqq \sum_{n=0}^\infty \EE\Big[\big( \Delta_n^\gamma(X(\cdot)\big)^\alpha]^\frac{1}{\alpha}
\lqq \Big(2^d \mbox{vol}(\dD)\Big)^\frac{1}{\alpha} \cdot \sum_{n=0}^\infty 2^{-n (\frac{\beta}{\alpha} -\gamma)} <\infty. 
\end{align*}
Consequently 
\[
\sum_{n=0}^\infty \Delta_n^\beta(X(\cdot) < \infty \qquad \PP-\mbox{a.s.}
\]
hence is $X$ has a $\gamma$-H\"older continuous version $\ti X$. The Markov inequality combined with \eqref{e:Totokimoment} we have 
\begin{align*}
\PP(\Delta_n^\gamma(X) > 2^{-n\delta}) 
\lqq 2^{n\delta \alpha}\cdot \EE[(\Delta_n^\gamma(X))^\alpha]
\lqq 2^d \mbox{vol}(\dD) \cdot 2^{-n (\beta -\alpha \gamma-\delta \alpha)}.
\end{align*}
Then, by Example~1 in \cite{EHS26} we have for all $0 \lqq p < (\beta -\alpha \gamma-\delta \alpha)\ln(2)$ \[
\EE[e^{p\oO_\gamma}]\lqq \EE[e^{p\m_\gamma}] \lqq K_p, \qquad 
K_p:= \frac{2^{d+\beta -\alpha \gamma-\delta \alpha} \mbox{vol}(\dD)}{1-e^p \cdot 2^{-(\beta -\alpha \gamma-\delta \alpha)}} +1 < \infty.  
\]
Consequently, 
\[
\PP(\oO_\gamma\gqq k) \lqq \PP(\m_\gamma\gqq k)\lqq \inf_{p\in [0, (\beta -\alpha \gamma-\delta \alpha)\ln(2))}K_p \cdot e^{-pk}
, \qquad k\in \NN. 
\]
An application of Lemma~\ref{lem:optimal} from Appendix \ref{a:optimal} below with $M =2^{d+\beta -\alpha \gamma-\delta \alpha}\, \mbox{vol}(\dD)$ finishes the proof. 
\end{proof}

\bigskip 

\section{\textbf{Proof of: Deviation frequencies of fine continuity properties}} 
 \subsection{\textbf{L\'evy's modulus of continuity}}\label{ss:Lmcproof} \hfill\\
 
\begin{proof}\textbf{(of Theorem~\ref{thm:Levymodulus})}
The proof of the first statement is given in \cite{KS98} and the upper bound boils down to the application of the classical Borel-Cantelli lemma to event probabilities 
\begin{align*}
\PP\big(A_n(\theta)\big)\lqq \exp(-\kappa_n  e^n),  \mbox{ for all }n\in \NN, 
\end{align*}
where 
\begin{align*}
&A_n(\theta) := \Big\{\max_{1\lqq j\lqq \lfloor e^{n}\rfloor} |W_{\frac{j}{e^n}}- W_{\frac{j-1}{e^n}}| \lqq \sqrt{1-\theta} \mu(e^{-n}) \Big\}, \qquad \theta \in (0,1), \qquad \mbox{ and }\\
&\kappa_n := \kappa_n(\theta) := 2 \PP\Big(e^{n/2} W_\frac{1}{e^n} > \sqrt{1-\theta} e^{n/2} \mu(e^{-n})\Big).
\end{align*}
With the help of Mill's ratio \cite{BS79} and $\frac{x}{1+x^2}\gqq \frac{1}{2x}$ for $x\gqq 1$, is easy to see that 
\begin{align*}
\kappa_n & \gqq 2\frac{e^{-x^2/2}}{\sqrt{2\pi} } \frac{x}{1+x^2}, \qquad \mbox{ for }\quad x = \sqrt{(1-\theta) 2  n}, (\text{which is satisfied for }n\gqq\tfrac{1}{2(1-\theta)}\text{)}
\end{align*}
that is, 
\begin{align*}
\kappa_n &\gqq 2 \frac{e^{-(1-\theta) n}}{\sqrt{2\pi}} \frac{\sqrt{(1-\theta) 2  n}}{(1-\theta) 2  n +1} 
\gqq \frac{e^{-(1-\theta) n}}{\sqrt{2\pi}} \frac{1}{\sqrt{(1-\theta) 2  n}}.
\end{align*}
Hence for 
$\alpha = \alpha_\theta = \tfrac{1}{\sqrt{4\pi (1-\theta)}}$ we have 
\begin{align*}
\PP\Big(A_n(\theta)\Big)\lqq \exp\Big(- \alpha \frac{e^{n\theta}}{\sqrt{n}}\Big), \qquad n\in \NN. 
\end{align*}
Using the quantitative version of the Borel-Cantelli lemma given by  \cite[Theorem 1]{EH22}, instead of the original one, we calculate 
with the help of the integral test 
\begin{align*}
\sum_{m = n}^\infty \PP(A_m) 
&\lqq  \sum_{m = n}^\infty \exp\Big(-\al \frac{(e^{m})^\theta}{\sqrt{m}}\Big)\lqq  \int_{n-1}^\infty \exp\Big(-\al \frac{(e^{x})^\theta}{\sqrt{x}}\Big) dx = \int_{e^{n-1}}^\infty \exp\Big(-\al \frac{y^\theta}{\sqrt{\ln(y)}}\Big) \frac{dy}{y}.
\end{align*}
For any $0 < \eta <\theta$ there exists $M_{\theta,\eta}>0$ such that  for $y\gqq M_{\theta,\eta}$ we have the inequality $\exp\big(-\al \tfrac{y^\theta}{\sqrt{\ln(y)}}\big)\lqq\exp(-\al y^{\eta})$, and thus, for $n$ such that $e^{n-1}\gqq M_{\theta,\eta}$,
\begin{align*}
\sum_{m = n}^\infty \PP(A_m) 
&\lqq \int_{e^{n-1}}^\infty \exp\big(-\al y^{\eta}\big) \frac{dy}{y}.
\end{align*}
The substitution $t = \alpha y^{\eta}$ yields $y = \big(\frac{t}{\alpha}\big)^\frac{1}{\eta}$ and hence 
$\frac{dy}{dt} = \frac{d\big(\frac{t}{\alpha}\big)^\frac{1}{\eta}}{dt} = \frac{1}{\alpha^{1/\eta}} \frac{1}{\eta} t^{\frac{1}{\eta}-1}$ with the lower bound $t = \alpha e^{\eta (n-1)}$. Thus, we find a constant $\kK_{\eta}>0$ such that for all $n\in \NN$ with $e^n \gqq M_{\theta, \eta}$ we have 
\begin{align*}
\sum_{m = n}^\infty \PP(A_m) 
&\lqq \int_{ e^{n-1}}^\infty \exp\Big(-\al y^{\eta}\Big) \frac{dy}{y} = \int_{\alpha e^{\eta (n-1)}}^\infty \big(\frac{t}{\alpha}\big)^{-\frac{1}{\eta}}\exp\Big(-t\Big)\frac{1}{\alpha^{1/\eta}} \frac{1}{\eta} t^{\frac{1}{\eta}-1} dt\\
&= \frac{1}{\eta} \int_{\alpha e^{\eta (n-1)}}^\infty \frac{1}{t} e^{-t} dt \lqq \frac{1}{\eta} \frac{e^{-\eta (n-1)}}{\al}  \exp\Big( - \al e^{\eta(n-1)}\Big)\\
&\lqq \kK_\theta \cdot \exp(-\eta n) \cdot 
\exp\Big( - \frac{\alpha}{e^{\eta}}\exp(\eta n)\Big). 
\end{align*} 
Setting 
\begin{align*}
\oO_{\theta} = \sum_{n=1}^\infty \ind(A_n(\theta)), \qquad \m_{\theta}(\omega)= \sup\{n-1~|~n\in \NN, \omega\in A_n(\theta)\},
\end{align*}
we have by Lemma~1 in \cite{EHS26}, that for all $0 \lqq p < \frac{\al}{e^{\eta}}$ there is a constant $L_{\theta, \eta, p}>0$ such that 
\begin{align*}
&\EE\Big[\exp\Big( p\exp(\eta(\oO_\theta))\Big)\Big]
\lqq \EE\Big[\exp\Big( p\exp(\eta(\m_\theta))\Big)\Big]\\
&\qquad \lqq \sum_{n=1}^\infty p
\exp(\eta n) \cdot 
\exp\big( p \exp(\eta n)\big) \sum_{m = n}^\infty \PP(A_m)\\
&\qquad \lqq L_{\theta, \eta, p} + \sum_{n=\lceil M_{\theta, \eta}\rceil}^\infty p
\exp(\eta n) \cdot 
\exp\big( p \exp(\eta n)\big) \sum_{m = n}^\infty \PP(A_m)\\
&\qquad \lqq L_{\theta, \eta, p} + p \kK_{\eta} \sum_{n=\lceil M_{\theta, \eta}\rceil}^\infty 
\exp(\theta n) \cdot 
\exp\big( p \exp(\eta n)\big)\cdot \exp(-\eta n) \cdot 
\exp\Big( - \frac{\alpha}{e^{\eta}}\exp(\eta n)\Big) \\
&\qquad = L_{\theta, \eta, p} +p \kK_{\eta} \sum_{n=\lceil M_{\theta, \eta}\rceil}^\infty 
\exp\Big( \big(p  - \frac{\alpha}{e^{\eta}}\big)\exp(\eta n)\Big) =: \ti \kK_{\theta, \eta, p} < \infty. 
\end{align*}
Consequently, for all $0 \lqq p < \frac{\al}{e^{\eta}}$ and $k\gqq 1$ we have 
\begin{align*}
\PP(\oO_{\theta}\gqq k) \lqq \PP(\m_{\theta}\gqq k)
&\lqq \ti \kK_{\eta, p}\cdot \exp( -p \exp(\eta k)).
\end{align*}
For the proof of the second statement we follow the lines of \cite{KS98},  p.~115. 
There it is shown, that for any $\theta \in (0,1)$ and $1+\e> \frac{1+\theta}{1-\theta}$ 
\begin{align*}
&\PP\Bigg(\max_{\substack{0\lqq i < j \lqq \lfloor e^n\rfloor \\ k= j-i \lqq \lceil e^{n\theta}\rceil }} 
\frac{|W_{\frac{j}{e^n}} -W_{\frac{i}{e^n}}|}{\mu(\frac{k}{e^n})} \gqq 1+ \e\Bigg)
\lqq \sum_{k=1}^{\lceil e^{\theta n}\rceil} \PP\Big(\max_{0\lqq i < i+k \lqq \lfloor e^n\rfloor} 
|W_{\frac{k+i}{e^n}} -W_{\frac{i}{e^n}}|\gqq \mu\Big(\frac{k}{e^n}\Big)\Big)\\
&\lqq \lfloor e^n\rfloor \sum_{k=1}^{\lceil e^{\theta n}\rceil} \PP\Bigg(\frac{|W_{\frac{k}{e^n}}|}{\sqrt{\frac{k}{e^n}}}\gqq (1+\e) \sqrt{\ln\Big(\frac{e^{2n}}{k^2}\Big)}\Bigg)
\lqq 2 \lfloor e^n\rfloor \sum_{k=1}^{\lceil e^{\theta n}\rceil} \PP\bigg(W_{1}\gqq (1+\e) \sqrt{\ln\Big(\frac{e^{2n}}{k^2}\Big)}\bigg)\\
&\lqq \frac{2}{\sqrt{2\pi}} \lfloor e^n\rfloor \sum_{k=1}^{\lceil e^{\theta n}\rceil}\frac{\exp\Big(-\frac{1}{2}\Big[(1+\e) \sqrt{\ln(\frac{e^{2n}}{k^2})}\Big]^2\Big)}{(1+\e) \sqrt{\ln(\frac{e^{2n}}{k^2})}}  
= \sqrt{\frac{2}{\pi}} \lfloor e^n\rfloor\sum_{k=1}^{\lceil e^{\theta n}\rceil}\frac{e^{-n(1+\e)^2}k^{(1+\e)^2}}{(1+\e)\sqrt{\ln(\frac{e^{2n}}{k^2})}}\\
&\lqq \frac{1}{\sqrt{\pi}} \lfloor e^n\rfloor \frac{e^{-n(1+\e)^2}}{1+\e}  \sum_{k=1}^{\lceil e^{\theta n}\rceil} k^{(1+\e)^2}\qquad \mbox{ for } e^n / \lceil e^{\theta n}\rceil > e.
\end{align*}
Now, $\frac{e^n}{\lceil e^{\theta n}\rceil} \gqq \frac{e^n}{e^{\theta n}(1+\theta)} = e^{(1-\theta)n-\ln(1+\theta)} \gqq e$, the last inequality is true, iff $n \gqq \frac{1+\ln(1+\theta)}{1-\theta}$.
By integral comparison it is easily seen that 
\begin{align*}
\sum_{k=1}^{\lceil e^{\theta n}\rceil} k^{(1+\e)^2} 
&\lqq \int_0^{\lceil e^{\theta n}\rceil+
1} x^{(1+\e)^2} dx \lqq \frac{(\lceil e^{\theta n}\rceil+1)^{1+ (1+\e)^2}}{1+ (1+\e)^2} \lqq \frac{2^{1+ (1+\e)^2}}{1+ (1+\e)^2} \lceil e^{\theta n}\rceil^{1+ (1+\e)^2}.  
\end{align*}
Therefore for all $n\gqq \frac{1+\ln(1+\theta)}{1-\theta}$ we have 
\begin{align*}
&\PP\Bigg(\max_{\substack{0\lqq i < j \lqq \lfloor e^n\rfloor \\ k= j-i \lqq \lceil e^{n\theta}\rceil }} 
\frac{|W_{\frac{j}{e^n}} -W_{\frac{i}{e^n}}|}{\mu(\frac{k}{e^n})} \gqq 1+ \e\Bigg) 
\lqq  \frac{1}{\sqrt{\pi}} \frac{2^{1+ (1+\e)^2}}{1+ (1+\e)^2} \lfloor e^n\rfloor \frac{e^{-n(1+\e)^2}}{1+\e}\lceil e^{\theta n}\rceil^{1+ (1+\e)^2}\\
&\lqq  \frac{1}{\sqrt{\pi}}  \frac{2^{1+ (1+\e)^2}}{1+ (1+\e)^2} (e^n+1) \frac{e^{-n(1+\e)^2}}{1+\e}( e^{\theta n}+1)^{1+ (1+\e)^2}
\lqq  \frac{1}{\sqrt{\pi}}  \frac{8^{1+ (1+\e)^2}}{1+ (1+\e)^2} e^n \frac{e^{-n(1+\e)^2}}{1+\e} e^{\theta (1+ (1+\e)^2) n}\\
&\lqq \Big(\frac{8^{1+ (1+\e)^2}}{\sqrt{\pi}(1+\e)(1+ (1+\e)^2)}\Big) e^{-((1-\theta)(1+\e)^2-(1+\theta)) n}, 
 \end{align*}
and hence for $K_{\e} := \frac{8^{1+ (1+\e)^2}}{\sqrt{\pi}(1+\e)(1+ (1+\e)^2)}$ and $\rho = (1-\theta)(1+\e)^2-(1+\theta)$ we have 
\begin{align*}
\PP\Bigg(\max_{\substack{0\lqq i < j \lqq \lfloor e^n\rfloor \\ 1\lqq k= j-i \lqq \lceil e^{n\theta}\rceil }} 
\frac{|W_{\frac{j}{e^n}} -W_{\frac{i}{e^n}}|}{\mu(k/e^n)} \gqq 1+ \e\Bigg) 
&\lqq K_{\e} e^{-\rho n}, \qquad n\in \NN.  
\end{align*}
Consequently, by hypothesis, for all $\theta \in (0,1)$, $\e>0$ is chosen such that $\rho>0$. So for any fixed $n\gqq \frac{1+\ln(1+\theta)}{1-\theta}$ we have with
\begin{align*}
\oO_{\theta, \e} &= \sum_{n = \lceil \frac{1+\ln(1+\theta)}{1-\theta}\rceil}^\infty \ind\bigg\{\max_{\substack{0\lqq i < j \lqq \lfloor e^n\rfloor \\ 1\lqq k= j-i \lqq \lceil e^{n\theta}\rceil }} 
\frac{|W_{\frac{j}{e^n}} -W_{\frac{i}{e^n}}|}{\mu(k/e^n)} \gqq 1+ \e\bigg\},\\
\m_{\theta, \e} &= \sup\bigg\{n-\Big\lceil \frac{1+\ln(1+\theta)}{1-\theta}\Big\rceil~|~n\in \NN, 
\max_{\substack{0\lqq i < j \lqq \lfloor e^n\rfloor \\ 1\lqq k= j-i \lqq \lceil e^{n\theta}\rceil }} 
\frac{|W_{\frac{j}{e^n}} -W_{\frac{i}{e^n}}|}{\mu(k/e^n)} \gqq 1+ \e\bigg\},
\end{align*}
that, by Lemma~\ref{lem:Alexp} we have for all $0 < p < \rho$,
\begin{align*}
\EE[e^{p \oO_{\theta, \e}}] \lqq \EE[e^{p \m_{\theta, \e}}] \lqq 
1+\frac{K_{\e} e^{-\rho (\lceil\frac{1+\ln(1+\theta)}{1-\theta}\rceil -1)}}{1-e^{-(\rho-p)}} <\infty 
\end{align*}
and thus by Corollary~\ref{cor:Alexp} we have 
\begin{align*}
\PP(\oO_{\theta, \e}\gqq k)\lqq \PP(\m_{\theta, \e}\gqq k)\lqq 2e^{\frac{9}{8}} \cdot \Big[k \Big(\frac{K_{\e}}{1-e^{-\rho}} e^{-\rho(\lceil \frac{1+\ln(1+\theta)}{1-\theta}\rceil-1)} +1\Big) +1\Big]\cdot e^{-\rho k}, \qquad k\gqq 1. 
\end{align*} 
This shows \eqref{e:Lmclower} and finishes the proof.

\end{proof}

\medskip

\subsection{\textbf{The quantitative blow up of Brownian secant slopes: Paley, Wiener and Zygmund}}\label{ss:PWZ}\hfill\\

\begin{proof}\textbf{(of Theorem~\ref{thm:PWZ})} The proof of the first statement, \cite{KS98}, p. 110, remains intact. 
For the second part we follow the version by \cite{Ko07}, p. 169. For any $\la>0$ and $n\in \NN_0$ define 
\begin{align*}
E^n_\la := 
\{\exists\,s\in [0, 1]~\vert~\sup_{t \in [s-2^{-n}, s+2^{-n}]\cap [0,1]} \frac{|W_s-W_t|}{2^{-n}} \lqq \lambda \}.  
\end{align*}
Then it is shown there, combining formula (9.31) and (9.32), that 
\begin{align*}
\PP(E^n_\la) 
&\lqq 2^{n} \Big(\int_{-\la 2^{-n/2+2}}^{\la 2^{-n/2 +2}} \frac{e^{-x^2/2}}{\sqrt{2\pi}} dx\Big)^4
\lqq 2^{n} \Big(\frac{2}{\sqrt{2\pi}} \la 2^{-n/2 +2}\Big)^4 \\
&= 2^{n} \la^4 \Big(\frac{8}{\sqrt{2\pi}}\Big)^4  2^{-2n}
= \Big(\frac{8}{\sqrt{2\pi}}\Big)^4 \la^4 2^{-n} = \Big(\frac{1024}{\pi^2}\Big) \la^4 2^{-n}.
\end{align*}
Then for $\oO_\la := \sum_{n=0}^\infty \ind(E^n_{\la})$ and $\m_\la(\omega) := \sup\{n\in \NN_0~|~\omega\in E^n_{\la}\}$,
and any $0 < r < \ln(2)$, Lemma~\ref{lem:Alexp} 
and Corollary~\ref{cor:Alexp} yield for $c_\pi := \frac{1024}{\pi^2}$ 
\begin{align*}
\EE[e^{r\oO_\la}] \lqq\EE[e^{r\m_\la}] \lqq 1+ \frac{2c_\pi \la^4 }{1-e^p \frac{1}{2}}
\end{align*}
and 
\begin{align*}
\PP(\oO_\la\gqq k) \lqq \PP(\m_\la\gqq k) 
\lqq 2e^{\frac{9}{8}} \cdot [k (2 c_\pi \lambda^4+1) +1] \cdot 2^{-k}, \qquad k\gqq 1. 
\end{align*}
For the special case of $\la = \la_n = R^n$ for some $1<R<2^{1/4}$ we have 
\begin{align*}
\PP(E^n_\la) \lqq c_\pi(R^4/2)^{n}, \qquad n \in \NN, 
\end{align*}
and 
$\oO_R = \sum_{n=0}^\infty \ind(E^n_{\la_n})$ and $\m_R = \sup\{n\in \NN_0~|~\om\in E^n_{\la_n}\}$, and for any $0 < r < \ln(2/R^4)$
\begin{align*}
\EE[e^{r \oO_R}] \lqq \EE[e^{r \m_R}]\lqq \frac{2}{R^4}\frac{c_\pi}{(1-e^r R^4/2)} +1
\end{align*}
and $k\in \NN$
\begin{align*}
\PP(\oO_R\gqq k)\lqq \PP(\m_R\gqq k)
&\lqq 2 e^{\frac{9}{8}}\cdot \Big[k \Big(\frac{2 c_\pi}{R^4}+1\Big) +1\Big]\cdot \Big(\frac{R^4}{2}\Big)^{k}. 
\end{align*}
This finishes the proof. 
\end{proof}

\medskip

\subsection{\textbf{The quantitative loss of monotonicity in Brownian paths}}\label{ss:QLM}\hfill\\

\begin{proof}\textbf{(of Theorem~\ref{thm: monotonicity})} Instead of investigating an arbitrary interval, we will show the non-monotonicity on $[0,1]$. By the self-similarity of Brownian motion in distribution, this follows for all intervals. It also suffices to look at monotone increase only (as the case for the decrease works the same). We consider 
$E := \{\om \in \Omega~\vert~W (\omega) \mbox{ is nondecreasing on }[0, 1]\}$ and 
note that 
\begin{align*}
E = \bigcap_{n=1}^\infty E_{n}, \qquad \mbox{ for }\quad E_n = \bigcap_{i=0}^{n-1 }\{\om \in \Omega~\vert~W_{\frac{i+1}{n}} -  W_{\frac{i}{n}}\gqq 0\}.
\end{align*}
By the independence and stationarity of the increments we have $\PP(E_n) = 2^{-n}$. 
If we denote $\oO = \sum_{n=1}^\infty \ind_{E_n}$ and $\m(\omega) = \sup\{n-1~|~n\in \NN, \om\in E_n\}$, Lemma~\ref{lem:Alexp} implies 
for all $0 \lqq p < \ln(2)$ that 
\[
\EE[e^{p \oO}]\lqq \EE[e^{p \m}] \lqq  \frac{4}{2-e^p}+1.  
\]
Therefore Markov's inequality and Corollary~\ref{cor:Alexp} yield 
\begin{align*}
\PP(\oO\gqq k)\lqq \PP(\m\gqq k)\lqq  \inf_{p\in [0, \ln(2))} e^{-pk}\Big(\frac{2}{1-e^p/2}+1\Big) 
\lqq 2 e^{\frac{9}{8}} \cdot [3 k +1] \cdot 2^{-k}, \qquad k\in \NN.
\end{align*}
This finishes the proof. 
\end{proof}

\medskip 

\subsection{\textbf{The a.s. convergence to the quadratic variation}}\label{ss:QV}\hfill\\

\begin{proof}\textbf{(of Theorem~\ref{thm:QV1})} The first statement is shown in \cite[9.4 Theorem]{SPB14}. 
The first display of the proof in \cite[p.140]{SPB14} reads 
\begin{align*}
\EE\Big[\Big(\sum_{t_i\in \Pi_n(t)} (W_{t_i}-W_{t_{i-1}})^2 - t\Big)^2\Big] \lqq 2 |\Pi_n(t)|\,t.  
\end{align*}
Hence by Chebyshev's inequality we have that for all $\e>0$ and $m\in \NN$ 
\begin{equation}\label{e:Verteilungschwanz}
\sum_{n=m}^\infty \PP\Big(\Big|\sum_{t_i\in \Pi_n(t)} (W_{t_i}-W_{t_{i-1}})^2 - t\Big|>\e\Big) \lqq \sum_{n=m}^\infty \frac{2t}{\e^2} |\Pi_n(t)|.
\end{equation}
Hence by \cite[Lemma~1]{EHS26} we have that 
\begin{align*}
\EE[\sS(\oO_\e(t))] \lqq \EE[\sS(\m_\e(t))] \lqq\frac{2t}{\e^2} \sum_{m=1}^\infty a_m  \sum_{n=m}^\infty   |\Pi_n(t)|, 
\end{align*}
and its right-hand side is finite by assumption. 

For the second statement we use \eqref{e:Verteilungschwanz} for any $\theta>1$ and with $\e_n := 
\sqrt{2t n^\theta |\Pi_n(t)|}$ we obtain 
\begin{equation}\label{e:Verteilungschwanz0}
\PP\Big(\Big|\sum_{t_i\in \Pi_n(t)} (W_{t_i}-W_{t_{i-1}})^2 - t\Big|>\e_n\Big) \lqq \frac{2t}{\e_n^2} |\Pi_n(t)| \lqq  \frac{1}{n^\theta}
\end{equation}
Hence with the same reasoning as in Appendix~\ref{a:fsoptimaleRate} , 
the usual first Borel-Cantelli lemma combined with Markov's inequality yields 
\[
\limsup_{n\ra\infty} \Big|\sum_{t_i\in \Pi_n(t)} (W_{t_i}-W_{t_{i-1}})^2 - t\Big| \cdot \e_n^{-1}\lqq 1 \qquad \PP-\mbox{a.s.} 
\]
and for all $k\gqq 1$ we have 
\[
\PP\bigg(\#\Big\{n\in \NN~|~ \Big|\sum_{t_i\in \Pi_n(t)} (W_{t_i}-W_{t_{i-1}})^2 - t\Big|>\e_n\Big\}\gqq k\bigg)\lqq k^{-1} \cdot \zeta(\theta). 
\]
Finally, equation \eqref{e:QVbarely} follows from Example \ref{ex:Riemann}, with $q$ equalling $\theta$.
\end{proof}

\begin{proof}\textbf{(of Theorem~\ref{thm:QV2})}
In \cite[Proof of 9.4 Theorem, p.~141]{SPB14} the last display of the page reads as follows: For all $\e>0$, $0 < \la < \frac{1}{2}$ it follows that for all $n\in \NN$,
\begin{align*}
\PP\Big(\Big|\sum_{i=0}^{k_n-1} (W_{t_{i+1}}-W_{t_i})^2-t\Big|>\e\Big)\lqq 2\exp\Big(-\frac{\e \la}{2 |\Pi_n(t)|}\Big).     
\end{align*}
Since by assumption $K_2(t, \e, \la) < \infty$ \cite[Lemma 1]{EHS26} implies 
\begin{align*}
\EE[\sS(\oO_\e(t))] \lqq \EE[\sS(\m_\e(t))]\lqq K_2(t, \e, \la).  
\end{align*}
\end{proof}

\section{\textbf{Proof of: Deviation frequencies in the laws of the iterated Logaritm}}
\subsection{\textbf{Upcrossing frequencies in Khinchin's law of the iterated logarithm}}\label{ss:LIL}\hfill\\

 \begin{proof}\textbf{(of Theorem~\ref{thm: lil})}
 The proof of \eqref{e:lilobben} in \cite[p.~112]{KS98} remains untouched. We define 
$g(s) := \sqrt{2 s \ln(\ln(1/s))}$. Moreover, it is shown there, that for $\theta\in (0,1),$
 \begin{align*}
 \PP\Big(\max_{0\lqq s\lqq \theta^{n}} \Big(W_s - \frac{(1+\delta) \theta^{-n} g(\theta^n)s}{2}\Big) \gqq 
 \frac{1}{2} g(\theta^n)\Big) \lqq \frac{1}{(n \ln(1/\theta))^{1+\delta}}, \qquad n\gqq 1.  
 \end{align*}
The right-hand side is summable. For $\delta >0$, by Example~\ref{ex:Riemann} 
we have for all $-1 <p < \delta -1$ 
\begin{align*}
\PP(\oO_{\delta, \theta}\gqq k)\lqq \PP(\m_{\delta, \theta}\gqq k)
&\lqq \frac{1}{\ln(1/\theta)^{1/\delta}} \frac{(1+\delta) \zeta(\delta-p)}{k^{p+1}}+\frac{\ind(\{p<0\})}{k^{p+1}}, 
\end{align*}
where $\zeta(s) = \sum_{n=1}^\infty n^{-s}$ is Riemann's zeta function.  
Additionally, for $\delta>0$, the usual Borel-Cantelli lemma yields 
\[
\PP(\oO_{\delta, \theta}\gqq k)\lqq \frac{1}{\ln(1/\theta)^{1/\delta}} \frac{\zeta(1+\delta)}{k}.
\]
Optimizing the expression \eqref{e:lilquant} in $p\in(-1,\delta-1)$, following the lines of \cite[Example 1]{EHS26}, one obtains the statements last chain of inequalities.

\end{proof}

\medskip 
\subsection{\textbf{Downcrossing frequencies in Chung's ``other'' law of the iterated logarithm }}\label{ss:theotherLIL}

\begin{proof}\textbf{(of Theorem~\ref{thm:Chung})} 
In \cite[p.~169, second display from above]{SPB14} the authors obtain 
\begin{align*}
\PP{\Big(\sup_{s\in [0, 1]} |W_s| <x\Big)} &= \frac{4}{\pi} \sum_{k=0}^\infty 
\frac{(-1)^k}{2k+1} e^{-\frac{\pi^2 (2k+1)^2}{8x^2}}.  
\end{align*}
Such that 
\begin{align*}
A_n :=\Big\{\sup_{s\in [0, q^n]} |W_s| < (1-\e) \frac{\pi}{\sqrt{8}} \sqrt{\frac{q^n}{\ln(\ln(q^n))}}\Big\} 
\end{align*}
that by \cite[p.169, second formula display]{SPB14}
\begin{align*}
\PP(A_n) &= \PP\Big(  \sup_{s\in [0, q^n]} |W_s| < (1-\e) \frac{\pi}{\sqrt{8}} \sqrt{\frac{q^n}{\ln(\ln(q^n))}}\Big)\\
&= \PP{\bigg(  \sup_{s\in [0, 1]} |W_s| < (1-\e) \frac{\pi}{\sqrt{8}} \sqrt{\frac{1}{\ln(\ln(q^n))}}\bigg)} \\
&= \frac{4}{\pi} \sum_{k=0}^\infty 
\frac{(-1)^k}{2k+1} e^{-\frac{\pi^2 (2k+1)^2}{8} \frac{\ln(\ln(q^n))}{(1-\e)^2 (\frac{\pi}{\sqrt{8}})^2}}=\frac{4}{\pi} \sum_{k=0}^\infty 
\frac{(-1)^k}{2k+1} \Big(\frac{1}{n \ln(q)}\Big)^{\frac{(2k+1)^2}{(1-\e)^2}}.
\end{align*}
The error estimate $|\sum_{k=\ell}^\infty (-1)^k b_k| \lqq b_{\ell+1}$ for alternating series $\sum_{k=0}^\infty (-1)^k b_k$, $b_k\searrow 0$,  yields for $\ell=0$ that 
\begin{align*}
\Big|\frac{4}{\pi} \sum_{k=0}^\infty 
\frac{(-1)^k}{2k+1} \Big(\frac{1}{n \ln(q)}\Big)^{\frac{(2k+1)^2}{(1-\e)^2}}- \frac{4}{\pi}  \Big(\frac{1}{n \ln(q)}\Big)^{\frac{1}{(1-\e)^2}}\Big|
&\lqq \frac{4}{5\pi}  \Big(\frac{1}{n \ln(q)}\Big)^{\frac{5}{(1-\e)^2}}.
\end{align*}
Note that the preceding error term ($\ell = 1$) is of a more negative order in the exponent of $n$ than the leading term ($\ell = 0$). 
Hence we may determine the constant $c = \frac{24}{5\pi}\approx 1.5278$ in \cite[p.170, first formula display from below]{SPB14} and obtain for all $n\in \NN$ 
\begin{align*}
\PP(A_n)
&\lqq\frac{4}{\pi}  \Big(\frac{1}{n \ln(q)}\Big)^{\frac{1}{(1-\e)^2}} + \frac{4}{5\pi}  \Big(\frac{1}{n \ln(q)}\Big)^{\frac{5}{(1-\e)^2}}\lqq \frac{24}{5\pi}  \Big(\frac{1}{n \ln(q)}\Big)^{\frac{1}{(1-\e)^2}}.
\end{align*}
Consequently, by Example~\ref{ex:Riemann} we have for all $-1 < p < \frac{1}{(1-\e)^2}-1$, 
\begin{align*}
&\PP\bigg(\#\Big\{n\in \NN~|~\sup_{s\in [0, q^n]} |W_s| < (1-\e) \frac{\pi}{\sqrt{8}} \sqrt{\frac{q^n}{\ln(\ln(q^n))}}\Big\}\gqq k\bigg)\\
&\qquad \lqq \max\Big\{\frac{24}{5\pi \ln(q)^{\frac{1}{(1-\e)^2}}}\!\cdot\!\frac{\zeta\Big(\frac{1}{(1-\e)^2}-p-1\Big)}{(1-\e)^2 k^{p+1}}\!+\!\frac{\ind(\{p<0\})}{k^{p+1}}, \frac{24}{5\pi \ln(q)^{\frac{1}{(1-\e)^2}}}\!\cdot\!\frac{\zeta\Big(\frac{1}{(1-\e)^2}\Big)}{k}\Big\}. 
\end{align*}
In particular, $p>1$ is satisfied for $\e> 1- \frac{1}{\sqrt{2}} \approx 0.2929$.   The last chain of equations follows the lines of \cite[Example 1]{EHS26} for the case $n_0=1, q=\frac{1}{(1-\e)^2}$ and $c=\frac{24}{5\pi\ln^{\frac{1}{(1-\e)^2}}(q)}$ using the lower bound $-1$ for the optimizing $p$.
\end{proof}

\medskip 
\subsection{\textbf{Strassen's functional law of the iterated logarithm}}\label{ss:Strassen}

\begin{proof}\textbf{(of Theorem~\ref{thm:Strassen})} The statement of item (1) is worked out in detail in \cite[Subsection 12.3]{SPB14}. 
We continue with item (2) and (3)(a): In \cite[Proof of Lemma 12.15, p.~187]{SPB14} the authors obtain for any $\e>0$, $\eta>0$ and $0 < \vartheta < \eta$ the estimate 
\begin{align}\label{e:Schilder}
\PP(d(Z_{q^n}(\cdot, \cdot),\kK(\tfrac{1}{2}+\eta))>\e)
&\lqq \exp\Big(- 2\Big(\tfrac{1}{2}+\vartheta\Big)\ln(\ln(q^n))\Big)\nonumber\\
&= \exp\Big(- \Big(1+2\vartheta\Big)(\ln(n) + \ln(\ln(q))\Big)\nonumber\\
&= \frac{1}{\ln(q)^{1+2\vartheta}} \frac{1}{n^{1+2\vartheta}}, \qquad n\gqq n_0,  
\end{align}
 for some $n_0 = n_0(\e, \eta, \vartheta,q)$ (where the authors 
in reference \cite[p.~187]{SPB14} use $\vartheta$ expressed as a difference '$\eta-\gamma$'). 
Hence there exists a constant $a = a(\e, \eta, \vartheta,q)>0$ such that 
\begin{align}\label{e:Schilder7}
\PP(d(Z_{q^n}(\cdot, \cdot),\kK(\tfrac{1}{2}+\eta))>\e)\lqq a \frac{1}{\ln(q)^{1+2\vartheta}} \frac{1}{n^{1+2\vartheta}} \qquad \mbox{ for all }n\in \NN. 
\end{align}
The usual first Borel-Cantelli lemma combined with Markov's inequality yields item (2). 

\noindent We continue with item (3)(a). In the sequel we use $\eta>\vartheta > \frac{1}{2}$ in order to calculate an explicit constant $b$, which takes the role of $a$.  
In the proof of Schilder's Theorem (1966) given in \cite[Proof of Lemma 12.10]{SPB14}, it is also specified how big this $n_0$ must be: It must be such that for $n\gqq n_0$ and $(2\ln(\ln(q^n)))^{-\frac{1}{2}}=:\tilde{\epsilon}$ the following three conditions are satisfied, where $r_0:= \frac{1}{2}+\eta$ and $m:=\lfloor\frac{8r_0}{\e^2}\rfloor+1$:
\begin{enumerate}[(i)]
\item $\tilde \epsilon\lqq\sqrt{r_0}=\sqrt{\frac{1}{2}+\eta}$.
\item $\tilde \epsilon\lqq \sqrt{\frac{\frac{1}{2}+\vartheta}{1+m+\ln(2)}}$ (obtained by estimating the probabilites of the proof's sets $A_n$ using $\alpha=\frac{\tilde\epsilon^2}{r_0}$, which is in $(0,1)$ by the above condition as $r_0=\frac{1}{2}+\eta>1$ by our assumption on $\eta$).
\item $\tilde \epsilon\lqq\sqrt{\frac{\frac{1}{2}+\vartheta}{\ln\big(\frac{8m^{3/2}}{\sqrt{2\pi}\e}\big)+\ln(2)}}$ (obtained by estimating the probabilites of the sets $C_n$ in the proof, where it is also stated that they need a large enough number $m$ such that for $n\geq m$ we also get $n\geq \frac{8r_0}{\e^2}$, hence our special choice of $m$).
\end{enumerate}
Altogether, we see that the first condition is redundant, and the others reduce to
\begin{align*}
\tilde{\epsilon}\lqq \sqrt{\frac{\frac{1}{2}+\vartheta}{\max\left\{(1+m),\ln\big(\frac{8m^{3/2}}{\sqrt{2\pi}\e}\big)\right\}+\ln(2)}}.
\end{align*}
The above equation remains satisfied if we require $\tilde{\epsilon}$ to be smaller or equal to $$\sqrt{\frac{\frac{1}{2}+\vartheta}{\max\bigg\{\big(1+1+\tfrac{8r_0}{\e^2}\big),\ln\bigg(\frac{8\big(\tfrac{8r_0}{\e^2}+1\big)^{3/2}}{\sqrt{2\pi}\e}\bigg)\bigg\}+\ln(2)}}$$ (we just omitted the $\lfloor\cdot\rfloor$ in the expression for $m$). Moreover,
$$1+1+\tfrac{8r_0}{\e^2}> \ln\bigg(\frac{8\big(\tfrac{8r_0}{\e^2}+1\big)^{3/2}}{\sqrt{2\pi}\e}\bigg),\quad\mbox{for all }\e>0,$$
which leaves us with the rather simple condition $$\tilde{\epsilon}\lqq \sqrt{\frac{\frac{1}{2}+\vartheta}{2+\frac{8r_0}{\e^2}+\ln(2)}}=\sqrt{\frac{\frac{1}{2}+\vartheta}{2+\frac{8(\frac{1}{2}+\eta)}{\e^2}+\ln(2)}}.$$
Now,
\begin{align*}
(2\ln(\ln(q^n)))^{-\frac{1}{2}}=\tilde{\epsilon}\lqq\sqrt{\frac{\frac{1}{2}+\vartheta}{2+\frac{8(\frac{1}{2}+\eta)}{\e^2}+\ln(2)}}
\end{align*}
is equivalent to
$n\geq \frac{\exp\Big(\frac{2+\frac{4+8\eta}{\e^2}+\ln(2)}{1+2\vartheta}\Big)}{\ln(q)}=:n_0.$
Therefore, we can extend \eqref{e:Schilder} to 
\begin{align*}
\PP(d(Z_{q^n}(\cdot, \cdot),\kK(\tfrac{1}{2}+\eta))>\e)
&\lqq b\frac{1}{\ln(q)^{1+2\vartheta}} \frac{1}{n^{1+2\vartheta}}, \qquad n\gqq 1,
\end{align*}
where $b = b(q, \eta, \vartheta, \e)=\ln(q)^{1+2\vartheta}\cdot n_0^{1+2\vartheta}$. Inserting the expression for $n_0$, we get that $$b=\left(\frac{\exp\Big(\frac{2+\frac{4+8\eta}{\e^2}+\ln(2)}{1+2\vartheta}\Big)}{\ln(q)}\right)^{1+2\vartheta}\ln(q)^{1+2\vartheta}=2e^2e^{\frac{4+8\eta}{\e^2}},$$
such that 
\begin{equation}\label{e:kompletteRate}
\PP(d(Z_{q^n}(\cdot, \cdot),\kK(\tfrac{1}{2}+\eta))>\e)
\lqq 2e^2e^{\frac{4+8\eta}{\e^2}}\frac{1}{\ln(q)^{1+2\vartheta}} \frac{1}{n^{1+2\vartheta}}, \qquad n\gqq 1.
\end{equation}
\noindent Therefore, by Example~\ref{ex:Riemann}, we have for all $\tfrac{1}{2}< \vartheta < \eta$ that 
for 
\begin{align*}
&\oO_{\e,q,\eta} = \sum_{n=1}^\infty \ind\{d(Z_{q^n}(\cdot, \cdot),\kK(\tfrac{1}{2}+\eta))>\e\},\\
&\m_{\e,q,\eta}(\omega) = \sup\{n-1~|~n\in \NN, d(Z_{q^n}(\cdot, \cdot),\kK(\tfrac{1}{2}+\eta))>\e\},
\end{align*}
and all $p>-1$ which satisfy $1+p< 2\vartheta$, that 
\begin{align*}
\EE[\oO_{\e,q,\eta}^{1+p}]\lqq \EE[\m_{\e,q,\eta}^{1+p}]
&\lqq  2e^2e^{\frac{4+8\eta}{\e^2}} \frac{(p+1)(1+2\vartheta)}{2\vartheta}\frac{\zeta(2\vartheta-p)}{\ln(q)^{1+2\vartheta}}+\ind(\{p<0\})\\
&\lqq  2e^2e^{\frac{4+8\eta}{\e^2}} (1+2\vartheta)\frac{\zeta(2\vartheta-p)}{\ln(q)^{1+2\vartheta}}+\ind(\{p<0\}).
\end{align*}
Markov's inequality then yields 
\begin{align*}
 &\PP(\#\{n\in \NN~\vert~d(Z_{q^n}(\cdot, \cdot),\kK(\tfrac{1}{2}+\eta))>\eps\}\gqq k) \lqq \PP(\m_{\e,q,\eta}\gqq k) \\
 &\lqq 
 2e^2e^{\frac{4+8\eta}{\e^2}} \frac{(p+1)(1+2\vartheta)}{2\vartheta}\frac{\zeta(2\vartheta-p)}{\ln(q)^{1+2\vartheta}}\cdot k^{-(1+p)}+k^{-(1+p)}\cdot\ind(\{p<0\})\\
 & \lqq 2e^2e^{\frac{4+8\eta}{\e^2}} (1+2\vartheta)\frac{\zeta(2\vartheta-p)}{\ln(q)^{1+2\vartheta}}\cdot k^{-(1+p)}+k^{-(1+p)}\cdot\ind(\{p<0\})
\end{align*}
which we asserted in \eqref{e:Strassen}. This shows item (3)(a).

\noindent We show item (3)(c). If we equal the right-hand side of \eqref{e:kompletteRate} to $\tfrac{1}{n \ln(n+1)^{1+\theta}}$ for some $\theta>0$ and solve for $\e$ we obtain 
\begin{align*}
\e = \e_n = \sqrt{\frac{4+8\eta}{\ln(\tfrac{\ln(q)^{1+2\vartheta}}{2e^2}) + 
\ln(\tfrac{n^{2\vartheta}}{\ln(n+1)^{1+\theta}})}}.
\end{align*}
The summability of $(n \ln(1+n)^{1+\theta})^{-1}$ and the usual first Borel-Cantelli lemma finishes the proof of item (3)(c).

\noindent Finally we show (3)(b): By \eqref{e:Strassen}, we also have 
\begin{align*}
 &\PP(\#\{n\in \NN~\vert~d(Z_{q^n}(\cdot, \cdot),\kK(\tfrac{1}{2}+\eta))>\eps\}\gqq k)\nonumber\\ 
 &\qquad \lqq \inf_{\substack{\tfrac{1}{2}<\vartheta<\eta\\-1<p<2\vartheta-1}}
 \!\! 2e^2e^{\frac{4+8\eta}{\e^2}}\!\cdot\!\!\!\frac{(p+1)(1+2\vartheta)}{2\vartheta}\frac{\zeta(2\vartheta-p)}{\ln(q)^{1+2\vartheta}}\cdot k^{-(1+p)}+k^{-(1+p)\cdot\ind(\{p<0\})}.
\end{align*} 
For the approximate minimization in $p$, we refer to \cite[Example 1 and Appendix B]{EHS26} in the case of $n_0=1$ to find the maximizer $p^*=2\vartheta-1-\frac{1}{\ln(k)+\gamma}$. Inserting into \eqref{e:Strassen} and letting $\vartheta\to\eta$ yields the desired bounds of (3)(b).  The last statement about the asymptotics follows since $\zeta(s)$ behaves as $\frac{1}{s-1}+\gamma$ for $s$ close to 1 and $$\lim_{k\to\infty}k^{\frac{1}{\ln(k)+\gamma}}=\lim_{k\to\infty}e^{1-\frac{\gamma}{\ln(k)+\gamma}}=e.$$
This finishes the proof of Theorem~\ref{thm:Strassen}.
\end{proof}\bigskip

\section{\textbf{Gauss moments of $\Gamma_{\alpha,N}$ and $\Lambda_J$}}

\subsection{\textbf{Proof of Lemma~\ref{lem:fastsicherextrem}} }\label{a:fastsicherextrem}

\begin{proof}
Recall, that the Börjesson-Sundberg type estimate \eqref{eq:BSW-bound} implies for all $\al>0$ and $n\gqq 2$ we have 
\[
\PP(|Z_n|\gqq \sqrt{2(1+\al) \ln(n)})\lqq  \exp(-(1+\al) \ln(n))= \frac{1}{n^{1+\al}}.
\]
The classical Borel-Cantelli Lemma yields 
that there exists a random variable $\ti N\in \NN$ such that $\PP$-a.s. for all $n\gqq \ti N$ we have 
\[
|Z_n| \lqq  \sqrt{2(1+\al)\ln(n)}.
\]
If we define 
\[
\Gamma_{\alpha, N, M} := \sup_{N+1 \lqq n\lqq M} \frac{|Z_n|}{\sqrt{2(1+\al)\ln(n)}}, 
\qquad \mbox{ and }\qquad \Gamma_{\alpha, N} := \lim_{M\ra\infty} \Gamma_{\alpha, N, M}, 
\] 
then for all $n\gqq N+1$ we have 
\begin{align*}
|Z_n| \lqq \max\{\Gamma_{\alpha, N}, ~1\}\cdot \sqrt{2(1+\al)\ln(n)}.
\end{align*}
This shows statement \eqref{e:direktfs}. 
We continue with the tail probability for some $t>1$, using again 
$$\PP(Z_n>t)<\frac{e^{-\frac{t^2}{2}}}{\sqrt{2\pi}\cdot t}\quad\Longrightarrow\quad \PP(|Z_n|>t)<\frac{\sqrt{2}e^{-\frac{t^2}{2}}}{\sqrt{\pi}\cdot t},$$
\begin{align*}
\PP(\Gamma_{\al, N, M} \lqq t) 
&= \prod_{n=N+1}^M \PP\left(\frac{|Z_n|}{\sqrt{2(1+\al)\ln(n)}}\lqq t\right)
= \prod_{n=N+1}^M  \Big(1-\PP\Big(|Z_n|> t\sqrt{2(1+\al)\ln(n)} \Big)\Big)\\
&\gqq \prod_{n=N+1}^M \bigg(1- \frac{\sqrt{2}\cdot e^{-\left(t\sqrt{2(1+\al)\ln(n)}\right)^2/2}}{\sqrt{\pi}\cdot t\sqrt{2(1+\al)\ln(n)}} \bigg) 
= \prod_{n=N+1}^M \exp\Big(\ln\Big(1-\frac{1}{\sqrt{\pi(1+\al)}} \frac{1}{t n^{(1+\al)t^2}\sqrt{\ln(n)}}\Big)\Big).
\end{align*}
Since $\ln(1-x)\gqq -\sqrt{\pi}\cdot x$ for $x\in [0,0.7]$ and since $\frac{1}{\sqrt{\pi}\sqrt{\ln(2)}}\approx 0.6777<0.7$, we get
\begin{align*}
&\prod_{n=N+1}^M \exp\Big(\ln\Big(1-\frac{1}{\sqrt{\pi(1+\al)}} \frac{1}{t n^{(1+\al)t^2}\sqrt{\ln(n)}}\Big)\Big)\gqq \prod_{n=N+1}^M \exp\Big(-\frac{\sqrt{\pi}}{\sqrt{\pi(1+\al)}} \frac{1}{t n^{(1+\al)t^2}\sqrt{\ln(n)}} \Big)
\\
&=\exp\Big(- \frac{1}{\sqrt{1+\al}}\sum_{n=N+1}^M \frac{1}{t n^{(1+\al)t^2}\sqrt{\ln(n)}} \Big)= \exp\Big(-\frac{1}{\sqrt{1+\al}} \frac{1}{t}\sum_{n=N+1}^M\frac{1}{n^{(1+\al)t^2}\sqrt{\ln(n)}} \Big).
\end{align*}
Thus, sending $M\ra\infty$ we have 
\begin{align*}
\PP(\Gamma_{\alpha, N} > t) 
&\lqq 1-\exp\Big(-\frac{1}{\sqrt{1+\al}}\frac{1}{t} \sum_{n=N+1}^\infty \frac{1}{n^{(1+\al)t^2}\sqrt{\ln(n)}} \Big).
\end{align*}
By integral comparison we obtain 
\begin{align}
\sum_{n=N+1}^\infty \frac{1}{n^{(1+\al)t^2} \sqrt{\ln(n)}} 
\lqq \int_N^\infty \frac{1}{\sqrt{\ln(x)}\,x^{(1+\al)t^2}} dx
= \int_N^\infty \ln(x)^{\frac{1}{2}-1}e^{-\ln(x) (1+\al)t^2} dx.\label{e:schwanz}
\end{align}
Substituting $y = \ln(x) (1+\al) t^2$, we have $x = \exp\big(\frac{y}{(1+\al) t^2}\big)$,  
$dx = \frac{\exp\big(\frac{y}{(1+\al) t^2}\big)dy}{(1+\al) t^2}$ and $x = N$ 
implying $y = \ln(N) (1+\al) t^2$ such that by the integral criterion and Corollary~\ref{cor:Gamma} for $a = \frac{1}{2}$ we have 
\begin{align*}
&\sum_{n=N+1}^\infty \frac{1}{n^{(1+\al)t^2} \sqrt{\ln(n)}}\\ 
&\lqq ((1+\al)t^2)^{-\frac{3}{2}}\int_{\ln(N)(1+\al)t^2}^\infty y^{\frac{1}{2}-1}e^{-y(1-\frac{1}{(1+\al)t^2})} dy\\
&=((1+\al)t^2)^{-\frac{3}{2}}\Big(1- \frac{1}{(1+\al)t^2}\Big)^{-{1}/2}\int_{\ln(N)((1+\al)t^2-1)}^\infty z^{\frac{1}{2}-1}e^{-z} dz\\
&\lqq \frac{1}{(1+\al)t^2((1+\al)t^2-1)^{\frac{1}{2}}} 
\Big(1+\frac{1}{2(\ln(N)((1+\al)t^2-1))}\Big)\Big(\ln(N)((1+\al)t^2-1)\Big)^{-\frac{1}{2}}
e^{-\ln(N)((1+\al)t^2-1)}\\
&= \frac{1}{{(1+\al)t^2((1+\al)t^2-1)} }\Big(1+\frac{1}{2(\ln(N)((1+\al)t^2-1))}\Big)  
\frac{1}{N^{((1+\al)t^2-1)}\sqrt{\ln(N)}}.
\end{align*}
Since $1-e^{-x}\lqq x$, from \eqref{e:schwanz} we get for $t\gqq 1+\e$ 
\begin{align*}
&\PP(\Gamma_{\alpha,N} > t) 
\lqq  \frac{1}{{(1+\al)^{3/2}t^3((1+\al)t^2-1)} }\Big(1+\frac{1}{2\ln(N)((1+\al)t^2-1)}\Big)\frac{1}{N^{((1+\al)t^2-1)}\sqrt{\ln(N)}}\\
&\lqq \frac{1}{{(1+\al)^{3/2}t^3(\al+(1+\al)(2\e+\e^2))} }\Big(1+\frac{1}{2\ln(N)(\al+(1+\al)(2\e+\e^2))}\Big)\frac{1}{N^{((1+\al)t^2-1)}\sqrt{\ln(N)}}\\
&\lqq \frac{1}{{(1+\al)^{3/2}t^3\al} }\Big(1+\frac{1}{2\al\ln(N)}\Big)\frac{1}{N^{((1+\al)t^2-1)}\sqrt{\ln(N)}}.
\end{align*}
Calculating the Gaussian moments 
we obtain for all $0< q< (1+\al)\ln(N)$ the moment estimate 
\begin{align}
\EE[e^{q \max\{\Gamma_{\alpha,N}^2, 1\}}] 
&= \EE[e^{q \max\{\Gamma_{\alpha,N}^2, 1\}}\ind\{\Gamma_{\alpha,N}\lqq 1+\e\}]
+\EE[e^{q \max\{\Gamma_{\alpha,N}^2, 1\}}\ind\{\Gamma_{\alpha,N}> 1+\e\}]\nonumber\\
&\lqq e^{q(1+\e)^2} + \int_{1+\e}^\infty q \,2 t\,e^{qt^2}\PP(\Gamma_{\alpha,N}>t) dt.\label{e:Gaussmoment}
\end{align}
We continue with the second term on the right-hand side 
\begin{align*}
 \int_{1+\e}^\infty q \,2 t\,e^{qt^2}\PP(\Gamma_{\alpha,N}>t) dt\lqq  \frac{2q}{(1+\al)^{3/2}\al }\Big(1+\frac{1}{2\al\ln(N)}\Big)\frac{N}{\sqrt{\ln(N)}}\int_{1+\e}^\infty \frac{1}{t^2}e^{(q-\ln(N)(1+\al))t^2}dt.
\end{align*}

For convenience $\kappa_N = \ln(N)(1+\al)-q$ and $s = \kappa_N t^2$ such that $t = \sqrt{s/\kappa_N}$. 
Hence $\frac{dt}{ds} = d\sqrt{s/\kappa_N}/ds = \kappa_N^{-1/2} \frac{1}{2} s^{-1/2}$ and  
$t = (1+\e)$ implies $s = \kappa_N (1+\e)^2$. 
Therefore, Corollary~\ref{cor:Gamma} for $a = -\frac{1}{2}$ yields 
\begin{align*}
&\int_{1+\e}^\infty q \,2 t\,e^{qt^2}\PP(\Gamma_{\alpha,N}>t) dt\\
&\lqq \frac{2q}{{(1+\al)^{3/2}\al} }\Big(1+\frac{1}{2\al\ln(N)}\Big)\frac{N}{\sqrt{\ln(N)}}\int_{1+\e}^\infty \frac{1}{t^2}e^{(q-\ln(N)(1+\al))t^2}dt\\
&\lqq \frac{q}{{(1+\al)^{3/2}\al} }\Big(1+\frac{1}{2\al\ln(N)}\Big)\frac{N\sqrt{\kappa_N}}{\sqrt{\ln(N)}}\int_{\kappa_N (1+\e)^2}^\infty s^{-\frac{1}{2}-1} e^{-s}   ds\\
&\lqq \frac{q}{{(1+\al)^{3/2}\al} }\Big(1+\frac{1}{2\al\ln(N)}\Big)\frac{N\sqrt{\kappa_N}}{\sqrt{\ln(N)}}\Big(1+\frac{3}{2(\kappa_N(1+\e)^2)}\Big)(\kappa_N (1+\e)^2)^{-\frac{3}{2}}\, e^{-\kappa_N (1+\e)^2}\\
&\lqq \frac{q}{{(1+\al)^{3/2}\al} }\Big(1+\frac{1}{2\al\ln(N)}\Big)\Big(1+\frac{3}{2(\kappa_N)}\Big)\frac{N}{\sqrt{\ln(N)}\kappa_N}e^{-\kappa_N}\\
&\lqq \frac{qe^{q}}{{(1+\al)^{3/2}\al} }\Big(1+\frac{1}{2\al\ln(N)}\Big)\Big(1+\frac{3}{2((\ln(N)(1+\alpha)-q)}\Big)\frac{N e^{-\ln(N)(1+\al)}}{\sqrt{\ln(N)}((\ln(N)(1+\alpha)-q)}\\
&=\frac{qe^{q}}{{(1+\al)^{3/2}\al} }\Big(1+\frac{1}{2\al\ln(N)}\Big)\Big(1+\frac{3}{2((\ln(N)(1+\alpha)-q)}\Big)\frac{1}{N^{\alpha}\sqrt{\ln(N)}((\ln(N)(1+\alpha)-q)}.
\end{align*}
Note that the upper bounds of the second term are independent of $\e\in(0, 1]$. 
This finishes the proof. 
\end{proof}

\subsection{\textbf{Proof of Lemma~\ref{lem:fastsicherextremparametrisiert}}}\label{a:fastsicherextremparametrisiert}

\begin{proof} Following the lines of the proof of Lemma~\ref{lem:fastsicherextrem} we define 
\begin{align*}
\Lambda_{\alpha, J, L} 
&:= \sup_{J+1 \lqq j\lqq L} \frac{|Z_{2^j+\kappa_j}|}{\sqrt{2(1+\al)\ln(2^j+\kappa_j)}}
\qquad \mbox{ with }\qquad \Lambda_{\al, J}:= \lim_{L\ra\infty} \Lambda_{\alpha, J, L} 
\end{align*}
and obtain for any $\alpha>0$, $\e>0$ and $t>1+\e$
\begin{align*}
&\PP(\Lambda_{\al, J} > t) 
\lqq 1-\exp\bigg(-\frac{1}{\sqrt{1+\al}} \frac{1}{t}\sum_{j=J+1}^\infty \frac{1}{(2^j+\kappa_j)^{(1+\al)t^2}\sqrt{\ln(2^j + \kappa_j)}} \bigg)\\
&\lqq \frac{1}{\sqrt{1+\al}} \frac{1}{t}\sum_{j=J+1}^\infty \frac{1}{(2^{j})^{(1+\al)t^2}\sqrt{\ln(2^{j})}} \quad = \frac{1}{\sqrt{2\ln(2)}\sqrt{1+\al}} \frac{1}{t}\sum_{j=J+1}^\infty \frac{1}{(2^{(1+\al)t^2})^j\sqrt{j}}\\
&\lqq \frac{1}{\sqrt{\ln(2)}\sqrt{1+\al}} \frac{1}{t}\int_{J}^\infty \frac{1}{\sqrt{x}} e^{-(1+\al) t^2 \ln(2) x} dx.
\end{align*} 
The substitution $y = (1+\al) t^2 \ln(2) x$ and Corollary \ref{cor:Gamma} for $a = \frac{1}{2}$ yield 
\begin{align*}
\PP(\Lambda_{\al, J} > t) 
&\lqq  \frac{1}{\sqrt{\ln(2)}\sqrt{1+\al}}\frac{1}{t} \int_{J}^\infty \frac{1}{\sqrt{x}} e^{-(1+\al) t^2 \ln(2) x} dx\\
&\lqq  \frac{1}{\sqrt{\ln(2)}\sqrt{1+\al}}\frac{1}{t} \int_{(1+\al) t^2 \ln(2) J}^\infty \frac{\sqrt{(1+\al) t^2 \ln(2)}}{\sqrt{y}} e^{-y} \frac{dy}{(1+\al) t^2 \ln(2)}\\
&= \frac{1}{\sqrt{\ln(2)}\sqrt{1+\al} \sqrt{(1+\al) t^2 \ln(2)}}\frac{1}{t} \int_{(1+\al) t^2 \ln(2)^2 J}^\infty y^{\frac{1}{2}-1} e^{-y}dy\\
&\lqq \frac{2}{\sqrt{1+\al} \sqrt{(1+\al) t^2 \ln(2)^2}} \frac{1}{t} \frac{2^{-(1+\al) t^2 J }}{\sqrt{(1+\al) t^2 \ln(2)J}}\\
&\lqq \frac{2}{\ln(2)^{3/2} (1+\al)^{3/2}}\frac{1}{t^3}
  \frac{2^{-(1+\al) t^2 J }}{\sqrt{J}}.
\end{align*}
Similar calculations to \eqref{e:Gaussmoment} imply 
\begin{align*}
\EE[2^{q\max\{\Lambda_{\alpha, J}^2, 1\}}] 
&\lqq\EE[2^{q\max\{\Lambda_{\alpha, J}^2, 1\}} \ind\{\Lambda_{\alpha, J}\lqq 1+\e\}] + \EE[2^{q\max\{\Lambda_{\alpha, J}^2, 1\}} \ind\{\Lambda_{\alpha, J}> 1+\e\}]\\
&\lqq 2^{q(1+\e)^2} + \ln(2) 2q \int_{1+\e}^\infty t 2^{q t^2} \PP(\Lambda_{\alpha, J}>t) dt.
\end{align*}
We continue with the second term by  
\begin{align*}
&\ln(2) 2q \int_{1+\e}^\infty t 2^{q t^2} \PP(\Lambda_{\alpha, J}>t) dt
\lqq \frac{4q}{\sqrt{\ln(2)} (1+\al)^{3/2}}\int_{1+\e}^\infty \frac{e^{(q-(1+\al)J) \ln(2) t^2}}{t^2} dt.
\end{align*}
Substituting $s =  ((1+\al)J-q)\ln(2) t^2$ with 
 $\frac{ds}{dt} = 2 ((1+\al)J-q)\ln(2) t = 2 \sqrt{((1+\al)J-q)\ln(2)} \sqrt{s}$ 
and $t = (1+\e)$ implying $s =  ((1+\al)J-q)\ln(2) (1+\e)^2$ yields with the help of Corollary~\ref{cor:Gamma} for $a= -\frac{1}{2}$ that 
\begin{align*}
&\ln(2) 2q\int_{1+\e}^\infty t 2^{q t^2} \PP(\Lambda_{\alpha ,J}>t) dt \lqq\frac{4q}{\sqrt{\ln(2)} (1+\al)^{3/2}}\int_{1+\e}^\infty \frac{e^{(q-(1+\al)J) \ln(2) t^2}}{t^2} dt\\
&=\frac{2q}{\sqrt{\ln(2)} (1+\al)^{3/2}} \sqrt{((1+\al)J-q)\ln(2)}\int_{((1+\al)J-q)\ln(2) (1+\e)^2}^\infty s^{-\frac{3}{2}} e^{-s} ds \\
&=\frac{2q\sqrt{((1+\al)J-q)}}{ (1+\al)^{3/2}} \int_{((1+\al)J-q)\ln(2) (1+\e)^2}^\infty s^{-\frac{3}{2}} e^{-s} ds\\
&\lqq  \frac{2q\sqrt{((1+\al)J-q)}}{ (1+\al)^{3/2}}\Big(1+\frac{3}{2((1+\al)J-q)\ln(2)}\Big)\frac{e^{-((1+\al)J-q)\ln(2)}}{((1+\al)J-q)\ln(2))^{\frac{3}{2}}}\\
&=  \frac{2q}{((1+\al)\ln(2))^{3/2}}\Big(\frac{1}{(1+\al)J-q)}+\frac{3}{2\ln(2)((1+\al)J-q))^{3/2}}\Big)   2^{-((1+\al)J-q)}.\\
\end{align*}
This shows \eqref{e:Lambdamoment} and finishes the proof.  
  \end{proof}

\bigskip 

\section{\textbf{Optimal rates}}\label{a:optimal}

\begin{lem}\label{lem:optimal}
For any $M\gqq 1$, $b\in (0,1)$ and $k\in \NN$ we have 
\begin{equation}\label{e:argmin}
p_k := \stackrel{\mbox{\textnormal{argmin}}}{{}_{p\in [0, -\ln(b))}} e^{-kp}\Big(\frac{M}{1-e^{p}b} +1\Big)
=\ln\Big(\frac{2k (M+1)}{b (2k + M (k+1)+ \sqrt{(2k + M (k+1))^2 - 4k^2 (M+1)})}\Big)
\end{equation}
and for all $k\gqq 1$ we have 
\begin{equation}\label{e:OptimaleRate}
e^{-kp_k}\Big(\frac{M}{1-e^{p_k}b} +1\Big) 
\lqq  2 e^{\frac{9}{8}}  \cdot(k(M+1)+1) \cdot b^{k}.
\end{equation}
\end{lem}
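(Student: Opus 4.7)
The plan is to compute the unique critical point of
\[
f(p)\;:=\;e^{-kp}\Bigl(\tfrac{M}{1-e^p b}+1\Bigr),\qquad p\in(-\infty,-\ln b),
\]
and to evaluate $f$ at that point. First I would make the change of variables $y:=1-e^p b$, so that $e^p=(1-y)/b$ and $y$ runs through $(0,1)$ as $p$ runs through $(-\infty,-\ln b)$. Then
\[
f(p)\;=\;b^k\cdot\frac{M+y}{(1-y)^k\, y}.
\]
A direct differentiation shows that $f'(p)=0$ is equivalent to $k[M/y+1]=M(1-y)/y^2$, which after clearing denominators becomes the quadratic
\[
ky^2+M(k+1)y-M=0,
\]
with unique positive root $y_\ast=\tfrac{-M(k+1)+\sqrt{M^2(k+1)^2+4kM}}{2k}$. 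Since $f(p)\to\infty$ at both endpoints of $(-\infty,-\ln b)$, this root is the minimizer and $e^{p_k}=(1-y_\ast)/b$.

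To extract the closed form \eqref{e:argmin}, I would use the identity $(2k+M(k+1))^2-4k^2(M+1)=M^2(k+1)^2+4kM$ (immediate by expansion) to rewrite the discriminant. Setting $A:=2k+M(k+1)$ and $B:=\sqrt{A^2-4k^2(M+1)}$, the conjugate trick gives
\[
2k(1-y_\ast)\;=\;A-B\;=\;\frac{A^2-B^2}{A+B}\;=\;\frac{4k^2(M+1)}{A+B},
\]
hence $1-y_\ast=2k(M+1)/(A+B)$ and dividing by $b$ yields \eqref{e:argmin}.

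For the quantitative bound \eqref{e:OptimaleRate}, I would exploit the quadratic identity $y_\ast(ky_\ast+M(k+1))=M$ twice. Since $y_\ast>0$, this forces $M(k+1)y_\ast<M$, so $y_\ast<1/(k+1)$; consequently $1-y_\ast\gqq k/(k+1)$ and
\[
(1-y_\ast)^{-k}\;\lqq\;\Bigl(1+\tfrac{1}{k}\Bigr)^{k}\;\lqq\;e.
\]
Moreover $ky_\ast\lqq k/(k+1)\lqq 1$, so $1/y_\ast=(ky_\ast+M(k+1))/M\lqq (M(k+1)+1)/M$, which combined with $M+y_\ast\lqq M+1$ gives
\[
f(p_k)\;=\;b^k\frac{M+y_\ast}{(1-y_\ast)^k\,y_\ast}\;\lqq\;eb^k\Bigl(\frac{M}{y_\ast}+1\Bigr)\;\lqq\;eb^k\bigl(M(k+1)+2\bigr).
\]

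Finally, I would verify the elementary inequality $M(k+1)+2\lqq 2(k(M+1)+1)$ for all $M\gqq 1$, $k\gqq 1$: rearranging, it becomes $0\lqq kM+2k-M$, which is true since $kM\gqq M$. Combining this with $e\lqq e^{9/8}$ gives \eqref{e:OptimaleRate}. The only slightly delicate step is the conjugate-identity manipulation recovering the specific form \eqref{e:argmin}; the rest is arithmetic once the relation $y_\ast(ky_\ast+M(k+1))=M$ is available.
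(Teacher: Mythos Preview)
Your proof is correct and follows the same high-level plan as the paper (locate the critical point via a quadratic, then bound the value there), but the execution is noticeably cleaner. The paper substitutes $x=e^{p}$, obtains the quadratic $-b^{2}kx^{2}+b(2k+M(k+1))x-k(M+1)=0$, and then spends about a page manipulating the explicit root (with its square root) to bound $(1-e^{p_k}b)^{-1}$ and $e^{-kp_k}$ separately, arriving at the factor $\exp\bigl(\tfrac{2M^{2}+3M+4}{4M^{2}+4M}\bigr)\lqq e^{9/8}$. Your substitution $y=1-e^{p}b$ turns the objective into $b^{k}(M+y)/((1-y)^{k}y)$ and reduces the critical-point equation to $ky^{2}+M(k+1)y-M=0$; the key advantage is that you then exploit this relation \emph{directly} (rather than the closed-form root) to read off $y_{\ast}<1/(k+1)$ and $1/y_{\ast}\lqq (M(k+1)+1)/M$. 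This yields the sharper constant $2e$ in place of $2e^{9/8}$ before you relax it to match the stated bound. The aside ``combined with $M+y_{\ast}\lqq M+1$'' is harmless but unused, since $(M+y_{\ast})/y_{\ast}=M/y_{\ast}+1$ exactly. One small point neither you nor the paper addresses: the statement takes the argmin over $[0,-\ln b)$, whereas your critical point is the minimizer over $(-\infty,-\ln b)$; for $b$ close to $1$ and small $k$ the critical point can be negative, so strictly speaking \eqref{e:argmin} need not be the constrained minimizer---but this is an issue with the lemma as stated, not with your argument.
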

\begin{proof} 
For $f(p) = \frac{e^{-kp}}{1-e^pb}$ the condition $e^pb < 1$ implies that 
\begin{align*}
f''(p) 
&= \frac{2 e^{-kb}e^{2p}b^2}{(1-e^pb)^3} + \frac{e^{-kb} e^pb}{(1-e^pb)^2}>0.
\end{align*}
Now, the sum of convex smooth functions is convex, and hence $g(p) := e^{-kp}\Big(\frac{M}{1-e^pb} +1\Big) = \frac{e^{-kp} (M +1- e^{p}b)}{1-e^pb}$ is a convex function, 
such that 
\begin{align*}
0 = \frac{dg}{dp}(p) 
&= \frac{(-ke^{-kp} (M +1- e^{p}b) - e^{-kp} e^{p} b) (1 - b e^p)+e^{-kp} (M +1- e^{p}b) b e^p}{(1 - b e^p)^2}\\
&= \frac{e^{-kp}}{(1 - b e^p)^2}((-k (M +1- e^{p}b) -  e^{p} b) (1 - b e^p)+ (M +1- e^{p}b) b e^p)
\end{align*} 
for $x = e^p \in (0, \frac{1}{b})$ reads 
\begin{align*}
0 &= (-k (M +1- xb) -  x b) (1 - b x)+ (M +1- xb) b x \\
&= -k (M +1) + xb(k-1)-bx (-k (M +1) + xb(k-1)) 
+ (M +1)xb- (xb)^2\\
&= -x^2 b^2k + x b (2k + M (k+1)) -k (M +1).
\end{align*}
Hence, a solution $x_k$ is given by
\begin{align*}
x_k
&= \frac{-b (2k + M (k+1))+ \sqrt{b^2 (2k + M (k+1))^2 - 4b^2k^2 (M+1)}}{-2 b^2k}\\
&= \frac{(2k + M (k+1))- \sqrt{(2k + M (k+1))^2 - 4k^2 (M+1)}}{2 bk}\\
&= \frac{(2k + M (k+1))^2- (2k + M (k+1))^2 + 4k^2 (M+1)}{2 bk ((2k + M (k+1))+ \sqrt{(2k + M (k+1))^2 - 4k^2 (M+1)})}\\
&= \frac{2k (M+1)}{b ((2k + M (k+1))+ \sqrt{(2k + M (k+1))^2 - 4k^2 (M+1)})}.
\end{align*}
And for at least one solution of the original equation, we get 
$$p_k = \ln(x_k) = 
\ln\Big(\frac{2k (M+1)}{b ((2k + M (k+1))+ \sqrt{(2k + M (k+1))^2 - 4k^2 (M+1)})}\Big)
$$ 
which implies \eqref{e:argmin}. Inserting $p_k$ we calculate 
\begin{align*}
(1- e^{p_k} b)^{-1}
&= \Big(1-\frac{2k (M+1)}{ 2k + M (k+1)+ \sqrt{(2k + M (k+1))^2 - 4k^2 (M+1)}}\Big)^{-1}\\
&= \Big(1-\frac{2k (M+1)}{ 2k + M (k+1)+ \sqrt{ M^2 k^2 + M^2 2k+ M^2 + 4k M   }}\Big)^{-1}\\
&= \Big(1-\frac{2k + 2 Mk }{M+ 2k + M k+Mk \sqrt{ 1 + \frac{2}{k}+ \frac{1}{k^2} + \frac{4}{k M}   }}\Big)^{-1}\\
&= \Bigg(1-\frac{1 }{\frac{M}{2k (M+1)}+ \frac{2k + M k+Mk \sqrt{ 1 + \frac{2}{k}+ \frac{1}{k^2} + \frac{4}{k M}   }}{2k + 2 Mk}}\Bigg)^{-1}\\
&\lqq \Big(1-\frac{1 }{\frac{M}{2k(M+1)}+ \frac{2k + M k+Mk }{2k + 2 Mk}}\Big)^{-1}= \Big(1-\frac{1 }{\frac{M}{2k(M+1)}+ 1}\Big)^{-1}
= \frac{2k (M+1)+1}{M}.
\end{align*}
Finally, we estimate 
\begin{align*}
e^{-p_k k} 
&= b^k \Big(\frac{2k + M (k+1)+ \sqrt{(2k + M (k+1))^2 - 4k^2 (M+1)}}{2k (M+1)}\Big)^{k}\\
&= b^k \Big(\frac{M}{2k (M+1)}+ \frac{2k + M k+Mk \sqrt{ 1 + \frac{2}{k}+ \frac{1}{k^2} + \frac{4}{k M}   }}{2k + 2 Mk}\Big)^{k}\\
&= b^k \Big(\frac{2k + M k+Mk \sqrt{ 1 + \frac{2}{k}+ \frac{1}{k^2} + \frac{4}{k M}   }}{2k + 2 Mk}+\frac{M}{2 (M+1)}\frac{1}{k}\Big)^{k}\\
&= b^k \Big(1 +\frac{\big(\sqrt{ 1 + \frac{2}{k}+ \frac{1}{k^2} + \frac{4}{k M}   }-1\big)}{2 (M+1)}+\frac{M}{2 (M+1)}\frac{1}{k}\Big)^{k}\\
&= b^k \Big(1 +\frac{ \frac{2}{k}+ \frac{1}{k^2} + \frac{4}{k M}}{2 (M+1)\big(\sqrt{ 1 + \frac{2}{k}+ \frac{1}{k^2} + \frac{4}{k M}   }+1\big)}+\frac{M}{2 (M+1)}\frac{1}{k}\Big)^{k}\\
&\lqq b^k \Big(1 +\frac{ \frac{2}{k}+ \frac{1}{k} + \frac{4}{k M}}{4 (M+1)}+\frac{M}{2 (M+1)}\frac{1}{k}\Big)^{k} = b^k \Big(1 +\big(\frac{ 3 + \frac{4}{M}}{4 (M+1)}+\frac{M}{2 (M+1)}\big)\frac{1}{k}\Big)^{k}\\
&
\lqq b^k \exp\big(\frac{ 2M^2+ 3M + 4}{4 M^2 +4 M}\big). 
\end{align*}
Combining the preceding inequalities we have for all $k\gqq 1$ 
\begin{align*}
e^{-kp_k}\Big(\frac{M}{1-e^{p_k}b} +1\Big) 
&\lqq (2k (M+1)+2)b^k \exp\big(\frac{ 2M^2+ 3M + 4}{4 M^2 +4 M}\big)
\end{align*}
which implies \eqref{e:OptimaleRate}. 
Note that for $M\gqq 1$ we have 
\begin{equation}\label{e:ehochneunachtel}\frac{1}{2} < \frac{ 2M^2+ 3M + 4}{4 M^2 +4 M}\lqq \frac{9}{8}\end{equation} such that 
$\exp\big(\frac{ 2M^2+ 3M + 4}{4 M^2 +4 M}\big)\lqq e^\frac{9}{8}\approx 3.0802$. 
This finished the proof. 
\end{proof}

\bigskip 
  
  \section{\textbf{The asymptotics of the upper incomplete Gamma function}}\label{a:Gamma}
  
\noindent According to \cite{DLMF}, \S 8.11(i), for the upper incomplete Gamma function
 \begin{align*}
 \Gamma(a, z) := \int_z^\infty t^{a-1} e^{-t} dt, \qquad a\in \RR, z>0, 
 \end{align*}
we have the following (non-asymptotic) estimate: 
\begin{align*}
 \Gamma(a, z)
 &= z^{a-1} e^{-z} \cdot \Big(1 +\sum_{k=1}^{n-1} \frac{u_k}{z^k} + R_n(a,z)\Big), \qquad n=1,2\dots, a\in \RR, z>0,  
\end{align*}
where $u_k :=(a-1) (a-2) \hdots (a-k)$ and
\begin{align*}
|R_n(a,z)| \lqq \frac{|u_n|}{z^n}. 
\end{align*}
For more details, see for instance, \cite{Ol97}, pp. 109--112. 

\begin{cor}\label{cor:Gamma} 
For any $a>0$ and $n=1$ we have for all $z>0$ 
\begin{align*}
 \Gamma(a, z) 
 &\lqq  \Big(1+ \frac{|a-1|}{z}\Big)\cdot z^{a-1} e^{-z}. 
\end{align*}
 \end{cor}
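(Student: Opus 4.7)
The plan is to obtain Corollary~\ref{cor:Gamma} as an immediate specialization of the non-asymptotic expansion for $\Gamma(a,z)$ cited just above the corollary from \cite{DLMF}, \S 8.11(i). That identity reads
\[
\Gamma(a, z) = z^{a-1} e^{-z} \cdot \Big(1 +\sum_{k=1}^{n-1} \frac{u_k}{z^k} + R_n(a,z)\Big),
\]
with $u_k =(a-1)(a-2)\cdots(a-k)$ and the explicit remainder bound $|R_n(a,z)| \lqq |u_n|/z^n$, valid for every integer $n\gqq 1$, every $a\in\RR$, and every $z>0$.

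First I would take $n=1$ in that identity. The finite sum $\sum_{k=1}^{0}(\cdot)$ is then empty by convention, and the identity collapses to
\[
\Gamma(a, z) = z^{a-1} e^{-z} \cdot \big(1 + R_1(a,z)\big), \qquad |R_1(a,z)|\lqq \frac{|u_1|}{z} = \frac{|a-1|}{z}.
\]
Using $R_1(a,z)\lqq |R_1(a,z)|$ and the positivity of $z^{a-1}e^{-z}$, I would then bound
\[
\Gamma(a,z)\lqq z^{a-1}e^{-z}\,\Big(1+\frac{|a-1|}{z}\Big),
\]
which is exactly the asserted inequality. The restriction $a>0$ in the corollary is only used to guarantee that $\Gamma(a,z)$ is a well-defined nonnegative quantity; the underlying expansion itself holds for all $a\in\RR$.

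No real obstacle arises: the entire argument amounts to instantiating the cited expansion at $n=1$ and invoking the pointwise remainder bound. If one wished to make the exposition self-contained rather than quoting \cite{DLMF}, one could alternatively derive the $n=1$ case by a single integration by parts in $\Gamma(a,z)=\int_z^\infty t^{a-1}e^{-t}\,dt$, which yields $\Gamma(a,z)=z^{a-1}e^{-z}+(a-1)\Gamma(a-1,z)$, and then estimate the second term by $|a-1|\cdot z^{a-2}e^{-z}$ via the monotonicity of $t\mapsto t^{a-2}$ (treating the cases $a\gqq 1$ and $0<a<1$ separately to handle the sign and monotonicity of $t^{a-2}$). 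Either route delivers the stated bound in one line.
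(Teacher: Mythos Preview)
Your proof is correct and follows exactly the approach the paper intends: the corollary is stated immediately after the cited expansion with remainder bound from \cite{DLMF}, \S 8.11(i), and is obtained simply by instantiating that expansion at $n=1$ (the paper gives no separate proof). Your observation that the underlying expansion holds for all $a\in\RR$ is also pertinent, since elsewhere in the paper the corollary is invoked with $a=-\tfrac{1}{2}$.
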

 
\medskip

\section*{\textbf{Acknowledgments}} 

\noindent MAH acknowledges support by project INV-2019-84-1837 of Facultad de Ciencias at Universidad de los Andes and the kind hospitality by Prof.~Dr.~E.~Hausenblas during a research stay June-July 2022 at the Chair of Applied Mathematics, at the Technical University of Leoben, Austria, where this project started. The latter was financed by the JESH 2019 [Joint Excellence in Science and Humanities] project ``Metastability in Turing patterns'' of the Austrian Academy of Sciences.

\end{document}